\documentclass[reqno]{amsart}
\usepackage{hyperref,xcolor,amssymb}
\usepackage[margin=1in]{geometry}
\usepackage[T1]{fontenc}
\usepackage[utf8]{inputenc}

\DeclareMathOperator*{\esssup}{ess\,sup}
\def\loc{\mathrm{loc}}
\DeclareMathOperator*{\diam}{diam}

\DeclareMathOperator*{\data}{data}

\newtheorem{lemma}{Lemma}
\newtheorem{remark}{Remark}
\newtheorem{definition}{Definition}
\newtheorem{theorem}{Theorem}
\newtheorem{corollary}{Corollary}
\numberwithin{equation}{section}

\DeclareMathOperator*{\essinf}{ess\,inf}
\newcommand{\dd}{\mathrm{d}}
\newcommand{\osc}{\mathrm{osc}}

\def\Xint#1{\mathchoice
	{\XXint\displaystyle\textstyle{#1}}%
	{\XXint\textstyle\scriptstyle{#1}}%
	{\XXint\scriptstyle\scriptscriptstyle{#1}}%
	{\XXint\scriptscriptstyle\scriptscriptstyle{#1}}%
	\!\int}
\def\XXint#1#2#3{{\setbox0=\hbox{$#1{#2#3}{\int}$ }
		\vcenter{\hbox{$#2#3$ }}\kern-.6\wd0}}

\def\dashint{\Xint-}

\title{Regularity results for quasiminima of a class of double phase problems}
\author{Antonella Nastasi, Cintia Pacchiano Camacho}
\address{Antonella Nastasi \newline
University of Palermo, Department of Engineering, Viale delle Scienze, 90128 Palermo, Italy}
\email{antonella.nastasi@unipa.it}

\address{Cintia Pacchiano Camacho\newline
	 University of Calgary,  Department of Mathematics and Statistics, Calgary, Canada}
\email{cintia.pacchiano@ucalgary.ca}

\subjclass[2020]{primary 49N60, 31E05; secondary 30L99, 35J60}
\keywords{double phase operator; quasiminima; regularity; measure metric spaces.}
\begin{document}

\begin{abstract}
  We prove boundedness, H\"older continuity, Harnack inequality results for local quasiminima to elliptic double phase problems of $p$-Laplace type in the general context of metric measure spaces. The proofs follow a variational approach and they are based on the De Giorgi method, a careful phase analysis and estimates in the intrinsic geometries.

\end{abstract}

  \maketitle
  
\section{Introduction}

We consider a complete metric measure space $(X,d,\mu)$ endowed with a metric $d$ and a doubling measure $\mu$ and supporting a weak $(1,p)$-Poincar\'e inequality.
Let  $\Omega$ be an open subset of $X$. 
This paper discusses regularity properties of local quasiminima of the double phase integral 
\begin{equation}\label{J}
\int_{\Omega}H(x,g_{u}) \,\dd \mu
=\int_{\Omega}(g_{u}^p + a(x) g_{u}^q) \,\dd \mu,
\end{equation} 
where $g_u$ is the minimal $p$-weak upper gradient of $u$ (see Definition \ref{pweakuppgrad}), $a(\cdot)\geq 0$ and
\begin{equation}\label{pqcond}
1\leq\frac{q}{p}\le1+\frac{\alpha}{Q}, 
\quad p>1,
\quad 0<\alpha\le1, 
\quad Q=\log_2C_D
\end{equation} 
with $C_D$ doubling constant of the measure. 
Note that $Q$ is a notion of dimension related to the measure $\mu$. 
For example, in the Euclidean $n$-space with the Lebesgue measure we have $Q=n$.
The double phase functional in \eqref{J} is denoted by
\[
H(x,z)= |z|^p+a(x)|z|^q, 
\quad x \in \Omega,\quad z\in \mathbb{R}.
\]
The modulating coefficient function $a(\cdot)\geq 0$ will be assumed to satisfy some standard regularity conditions, see \eqref{aalpha} below for the precise hypotheses set.
The integral \eqref{J} is characterised by an energy density switching between two kind of degenerate behaviours, based on the size of the modulating coefficient denoted as $a(\cdot)$, which determines the phase. When $a(x)=0$, the variational integral in \eqref{J} reduces to the familiar problem with $p$-growth and when $a(x)\ge c>0$ we have the $(p,q)$-problem. We note that the natural condition \eqref{pqcond} on the gap between the exponents $p$ and $q$ for the regularity of
quasiminima is sharp.

Starting from the seminal papers of Marcellini \cite{Ma1, Ma2, Ma3}, the regularity theory for minimizers of double phase functionals has been treated at length in the euclidean setting. Recently Colombo-Mingione \cite{CM1,CM}, Cupini-Marcellini-Mascolo \cite{CMM}, De Filippis-Mingione \cite{DFM}, Esposito-Leonetti-Mingione \cite{ELM}, Di Marco-Marcellini \cite{DMM}, Mingione \cite{Min}, Tachikawa \cite{Tach} considered classes of nonuniformly elliptic problems and proved different regularity properties. For example, H\"{o}lder regularity results for certain class of double phase problems with non standard growth conditions can be found in Ciani-Skrypnik-Vespri \cite{CSV}, D\"{u}zg\"{u}n-Marcellini-Vespri \cite{DMV1, DMV2}, Eleuteri \cite{Ele}, Harjulehto-H\"{a}st\"{o}-Toivanen \cite{HHT}, Di Benedetto-Gianazza-Vespri \cite{DGV}, Liao-Skrypnik-Vespri \cite{LSV} and Liskevich–Skrypnik \cite{LS}. Harnack inequalities for double phase problems were obtained by Di Benedetto-Trudinger \cite{DT}, Baroni-Colombo-Mingione \cite{BCM} and the references therein. See also Kinnunen-Lehrb\"{a}ck-V\"{a}h\"{a}kangas \cite{KLM}, Marcellini \cite{Ma, Ma0} and Mingione-R\v{a}dulescu \cite{MinR} for other results and literature reviews.

Quasiminima provide a natural setting for studying regularity theory of differential equations from a purely variational point of view, as demonstrated by Di Benedetto-Trudinger \cite{DT}, Giaquinta-Giusti \cite{GG1,GG2} and their references. Regularity methods established for quasiminima have the advantage to rely only on energy estimates. Furthermore, they offer a unifying aspect, indeed a large class of nonlinear differential equations with similar growth conditions are embraced by the same class of quasiminima. The main advantage of the notion of quasiminimizer of \eqref{J} is that it simultaneously covers a large class of problems where the variational integrand $F:\Omega\times\mathbb R\times \mathbb R\to\mathbb R$ satisfies the Carath\'eody conditions and
\[
\Theta H(x,z)\le F(x,u,z)\le\Lambda H(x,z),
\quad 0<\Theta<\Lambda<\infty,
\]
for every $x \in \Omega$ and $v,z\in \mathbb{R}$.

\noindent We point out that we develop our study of regularity for local quasiminima $u$ of \eqref{J} without assuming any other extra assumptions on $u$ except for the natural set up $N^{1,1}_{\loc}({\Omega})$ with $H(\cdot,g_u)\in L^1_{\loc}(\Omega)$. Here, $N^{1,1}_{\loc}({\Omega})$ denotes the Newtonian–Sobolev space on a metric measure space, see \cite{BB, FHK, H, HKST, KKM, Sh1}.

The regularity theory for minima and quasiminima in the general context of metric measure spaces poses additional difficulties. 
In metric measure spaces, it is not clear how to consider the Euler-Lagrange equation, but it is possible to apply variational methods. Certainly, metric measure spaces do not necessarily possess a smooth structure, and as a result, directions and partial derivatives may not be available. The initial observation to make is that in the application of variational methods, it is the modulus of the gradient, rather than the gradient itself, that plays a crucial role. In fact, Sobolev spaces on a metric measure space can be defined in terms of the modulus of the gradient without the notion of distributional derivatives. Recently, the study of Sobolev spaces without a differentiable structure and a variational theory of the $p$-energy functionals in metric measure spaces have attracted many researchers. Without claiming to provide an exhaustive literature, we refer to Bj\"{o}rn \cite{B} and Bj\"{o}rn-MacManus-Shanmugalingam \cite{BMS} for boundary regularity results for quasiminima. Also Kinnunen-Shanmugalingam \cite{KS} proved some local properties of quasiminima of the $p$-energy integral. Bj\"{o}rn-Bj\"{o}rn-Shanmugalingam \cite{BBS} studied the Dirichlet problem for $p$-harmonic functions.  Moreover, Cheeger \cite{C} focused on differentiability results and Kinnunen-Martio \cite{KM} on potential theory. Kinnunen-Marola-Martio \cite{KMM} proved Harnack principle and Kinnunen-Shanmugalingam \cite{KS} showed some local properties of quasiminima of the $p$-energy integral always in the context of metric spaces. Recently, higher integrability results for the gradient of quasiminima for double phase problems were obtained by Kinnunen-Nastasi-Pacchiano Camacho \cite{KNP}.

In the present work, by taking into account the structure of the underlying metric measure space, we use intrinsic type arguments to obtain regularity properties of quasiminima. In this direction, we develop an approach which presents the advantage to work with the usual estimates as starting point and it enables the study of the double phase functional $H(x,g_{u})$ as a sort of replacement of the generalization of the modulus of the gradient in the general context of metric measure spaces to the power $p$, that is $g_{u}^p$. 
The methods of studying regularity properties of double phase problems are strictly related to frozen functionals. Basically, the presence of the term related to the coefficient function $a(\cdot)$ is reduced to be a perturbation of the $p$-phase problem. 
This permits to use some classical results related to perturbation such as Schauder estimates \cite{Manfredi} and Calderón-Zygmund theory \cite{DM}. For example in the euclidean case, frozen functionals fall in the class of functionals with general growth conditions considered by Lieberman \cite{Lieberman}, for which regularity results are well known. To our knowledge, this is the first study where an approach based on frozen functionals is extended to the general context of metric measure spaces. 
We note that, with the use of this method, the constants in the estimates obtained do not depend on the  coefficient $a_0$ and this is a crucial difference from the corresponding results in \cite{NP}. 

The main challenge of the present manuscript is to provide self-contained and transparent proofs of new and more general theorems, which lay the basis for further studies and investigations in the theory of regularity for problems with non standard growth conditions. The motivation behind this study comes from the paper by Baroni-Colombo-Mingione \cite{BCM}. More specifically, the first step of the study consists in proving the local boundedness of the local quasiminimizer (Section \ref{Boundedness}). Then taking into account the nature of ellipticity of the functional, that implies a switch between the $p$ and $q$ rates accordingly to the value of $a(\cdot)$, we conduct a careful analysis on phases. This led to new findings regarding the so called frozen functionals in the general context of metric measure spaces (Section \ref{frozen}). This permits to establish the H\"older continuity of $u$, without assuming a-priori its local boundedness (Section \ref{Sec4}). The last section (Section \ref{Sec5}) is dedicated to a Harnack inequality result. These results are mainly obtained by assuming only condition \eqref{pqcond}, which consists in a bound on the ratio $q/p$, in dependence of the constant $Q$ which plays the role of dimension related to the measure $\mu$ and under certain regularity conditions (see \eqref{aalpha}) on the coefficient $a(\cdot)$.

\section{Notation and preliminaries}\label{Sec2}
Let $(X, d, \mu)$ be a complete metric measure space, where $d$ is the metric and $\mu$ is a doubling Borel regular measure, that is, there exists a constant $C_D \geq 1$ such that 
\begin{equation}\label{doubling}
0<\mu(B_{2r})\leq C_D \mu(B_r)<\infty,
\end{equation} 
for every open ball $B_r=B_r(x)=\{x\in X:d(y,x)<r\}$ with center $x\in X$ and radius $0<r<\infty$. 

The next lemma provides a notion of dimension of the space related to a doubling measure $\mu$.

\begin{lemma}[\cite{BB}, Lemma 3.3]\label{lemm3.3}
	Let $(X, d, \mu)$ be a metric measure space with a doubling measure $\mu$. Then, for every $0<r\le R<\infty$, $x \in X$ and $y \in B_R(x)$, 
	\begin{equation}\label{s}
		\frac{\mu(B_{r}(y))}{\mu(B_R(x))}\geq C\left(\frac{r}{R}\right)^Q, 
	\end{equation}
	where $Q=\log_2C_D$ and $C=C_D^{-2}$.
\end{lemma}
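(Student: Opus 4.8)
The plan is to reduce the statement to a finite iteration of the doubling condition \eqref{doubling}, after first getting rid of the two different centers. First I would note that since $y\in B_R(x)$, any $z\in B_R(x)$ satisfies $d(z,y)\le d(z,x)+d(x,y)<2R$ by the triangle inequality, so $B_R(x)\subseteq B_{2R}(y)$ and hence $\mu(B_R(x))\le\mu(B_{2R}(y))$. This leaves a purely one-center comparison between $B_{2R}(y)$ and $B_r(y)$.

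Next I would iterate \eqref{doubling} in the form $\mu(B_{2\rho}(y))\le C_D\,\mu(B_\rho(y))$ exactly $k$ times to obtain $\mu(B_{2R}(y))\le C_D^k\,\mu(B_{2^{1-k}R}(y))$, where $k$ is chosen as the smallest positive integer with $2^{1-k}R\le r$, that is, $k=\lceil 1+\log_2(R/r)\rceil$ (note $k\ge 1$ because $r\le R$). For this choice one has $B_{2^{1-k}R}(y)\subseteq B_r(y)$, hence $\mu(B_{2^{1-k}R}(y))\le\mu(B_r(y))$, while on the other hand $k<2+\log_2(R/r)$, so that
\[
C_D^k<C_D^{\,2+\log_2(R/r)}=C_D^2\,(R/r)^{\log_2C_D}=C_D^2\,(R/r)^Q.
\]
Chaining the three inequalities gives $\mu(B_R(x))\le C_D^2\,(R/r)^Q\,\mu(B_r(y))$, which rearranges to \eqref{s} with $C=C_D^{-2}$.

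I do not expect a real obstacle here; the only point requiring a little care is the bookkeeping of the exponent, so that the final constant comes out exactly as $C_D^{-2}$ and not merely as some unspecified dimensional constant. In particular one should check the borderline case $r=R$ (where $k=1$) and use that $\lceil t\rceil<t+1$ holds for every real $t$, so the strict bound $k<2+\log_2(R/r)$ is available without splitting into subcases. It is also worth observing that the argument uses nothing about $y$ beyond $y\in B_R(x)$, and that neither completeness of $X$ nor the Poincaré inequality enters this particular lemma.
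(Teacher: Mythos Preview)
Your proof is correct. The paper itself does not supply a proof of this lemma; it is simply quoted from \cite{BB}, and your argument---first absorbing the change of center via the inclusion $B_R(x)\subseteq B_{2R}(y)$, then iterating the doubling inequality $k=\lceil 1+\log_2(R/r)\rceil$ times and using $k<2+\log_2(R/r)$ to extract the exact constant $C_D^{-2}$---is exactly the standard proof one finds there.
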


We observe that a complete metric metric measure space with a doubling measure is proper, which means that it is closed and bounded subsets are compact (see \cite[Proposition 3.1]{BB}).
Now we introduce the definition of upper gradient which permits to generalize the modulus of the gradient in the Euclidean case to the metric setting. 
More details can be found in the book \cite{BB} by  Bj\"{o}rn and Bj\"{o}rn.

\begin{definition}\label{pweakuppgrad}
	Let $g$ be a nonnegative Borel function. $g$ is an upper gradient of $u: X \to [-\infty,\infty]$ if, for all rectifiable paths $\gamma$ connecting the endpoints $x$ and $y$, we have 
	\begin{equation}\label{upgrad}
		|u(x)-u(y)|\leq \int_{\gamma}g\, \dd s, 
	\end{equation}
	whenever $u(x)$ and $u(y)$ are both finite and $\int_{\gamma}g \, \dd s= \infty$ otherwise. 
	Moreover, if a nonnegative measurable function $g$ satisfies \eqref{upgrad} for $p$-almost every path (that is, with the exception of a path family of zero $p$-modulus, see Definition 1.33 in \cite{BB}), then $g$ is called a $p$-weak upper gradient of $u$.
\end{definition}

Let $1\leq p<\infty$ and $\Omega$ be an open set of $X$. We define
\[
\Vert u\Vert_{N^{1,p}(\Omega)}=\Vert u\Vert_{L^{p}(\Omega)}+\inf\Vert g\Vert_{L^{p}(\Omega)},
\]
where the infimum is taken over all upper gradients $g$ of $u$.
We consider the set of functions $u\in L^p(\Omega)$ having upper gradient $g\in L^p(\Omega)$ and let
\begin{equation*}
	\widetilde{N}^{1,p}(\Omega)
	=\lbrace u:\Vert u\Vert_{N^{1,p}(\Omega)}<\infty\rbrace.
\end{equation*}
The Newtonian space is defined by
\[
N^{1,p}(\Omega)=\lbrace u:\Vert u\Vert_{N^{1,p}(\Omega)}<\infty\rbrace/\sim,
\]
where $u\sim v$ if and only if $\Vert u-v\Vert_{N^{1,p}(\Omega)}=0$. More details can be found in \cite{Sh1}.\\
A function $u$ is said to belong to the local Newtonian space $N^{1,p}_{\loc}(\Omega)$ if
$u\in N^{1,p}(\Omega')$ for all $\Omega'\Subset \Omega$, that is $\Omega'$ is compactly contained in $\Omega$ (see \cite[Proposition 2.29]{BB}).
It is known (see  \cite[Theorem 2.5]{BB}) that if $u$ has an upper gradient $g\in L^p(\Omega)$, then there is a unique minimal $p$-weak upper gradient $g_u\in L^p(\Omega)$, with
$g_u\le g$ $\mu$-almost everywhere for all $p$-weak upper gradients $g\in L^p(\Omega)$ of $u$.
Moreover, the minimal $p$-weak upper gradient is unique up to sets of measure zero.
Let $u\in N^{1,p}(\Omega)$ we have
\[
\Vert u\Vert_{N^{1,p}(\Omega)}=\Vert u\Vert_{L^{p}(\Omega)}+\Vert g_u\Vert_{L^{p}(\Omega)},
\]
where $g_u$ is the minimal $p$-weak upper gradient of $u$.\\
We define $N^{1,q}_0(\Omega)$ to be the set of functions $u\in N^{1,q}(X)$ that are zero on $X\setminus\Omega$ $\mu$-a.e. The space $N_0^{1,q}(\Omega)$ is equipped with the norm $\Vert\cdot\Vert_{N^{1,q}}$. We remark that if $\mu(X\setminus\Omega) = 0$, then $N^{1,q}_0(\Omega)=N^{1,q}(X)$. As a consequence, we must assume that $\mu(X \setminus \Omega) > 0$.\\
We denote by 
\[
u_{B}=\dashint_{B} u\, \dd\mu
=\frac{1}{\mu(B)}\int_{B}u\,\dd\mu.
\] the integral average of $u$ over a ball $B$.

We suppose that $X$ supports the following Poincar\'{e} inequality.

\begin{definition}
Let $1\le p<\infty$. 
A metric measure space $(X,d,\mu)$ supports a weak $(1, p)$-Poincar\'{e} inequality if there exist a constant $C_{PI}$ and a dilation factor $\lambda \geq 1 $ such that 
\[
	\dashint_{B_r} |u-u_{B_r}|\, \dd\mu
	\leq C_{PI} r \left(\dashint_{B_{\lambda r}}g_u^p \, \dd\mu\right)^{\frac{1}{p}},
\]
for every ball $B_r$ in $X$ and for every $u\in L^1_{\loc}(X)$.
\end{definition}

We note that the Poincar\'{e} inequality satisfies the following self-improving property, as shown in \cite[ Theorem 1.0.1]{KZ} by Keith and Zhong, see also \cite[Theorem 4.30]{BB}.
\begin{theorem}\label{kz}
	Let $(X, d, \mu)$ be a complete metric measure space with a doubling measure $\mu$ and a weak $(1,p)$-Poincar\'{e} inequality with $p>1$.
	Then there exists $\varepsilon>0$ such that $X$ supports a weak $(1, q)$-Poincar\'{e} inequality for every $q>p-\varepsilon$. 
	Here, $\varepsilon$ and the constants associated with the $(1, q)$-Poincar\'e inequality depend only on $C_D$, $C_{PI}$ and $p$.
\end{theorem}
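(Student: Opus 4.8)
The plan is to prove Theorem~\ref{kz} following the strategy of Keith and Zhong, whose engine is a maximal function estimate combined with a Calderón--Zygmund type decomposition. The first reduction is to recall that, by the classical chaining argument of Hajłasz--Koskela, the weak $(1,p)$-Poincaré inequality together with the doubling property \eqref{doubling} already self-improves on the left-hand side: it implies a weak $(p,p)$-Poincaré inequality
\begin{equation*}
\dashint_{B_r}|u-u_{B_r}|^p\,\dd\mu\le C\, r^p\dashint_{B_{\lambda r}}g_u^p\,\dd\mu ,
\end{equation*}
and, more precisely, the pointwise bound $|u(x)-u(y)|\le C\,d(x,y)\big((\mathcal{M}g_u^p(x))^{1/p}+(\mathcal{M}g_u^p(y))^{1/p}\big)$ for $\mu$-a.e.\ $x,y$, where $\mathcal{M}$ is a suitably restricted Hardy--Littlewood maximal operator; both follow by telescoping the Poincaré inequality along a chain of balls shrinking to the points, using Lemma~\ref{lemm3.3} to control the ratios of measures. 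Thus the genuine content of the theorem is to \emph{lower the exponent on the gradient}: it suffices to produce $\varepsilon>0$ and to establish a weak $(q,q)$-Poincaré inequality with $q=p-\varepsilon$, since a $(1,q)$-inequality then follows from Hölder's inequality, while conversely $(1,q)$ for $q<p$ is already stronger than $(1,p)$.

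Next I would fix a ball $B_0=B_r(x_0)$, a function $u$ with upper gradient $g$, normalise $u_{B_0}=0$, and aim at an estimate of the form $\dashint_{B_0}|u|^q\,\dd\mu\le C\,r^q\dashint_{B_{\Lambda r}}g^q\,\dd\mu$ for a fixed dilation $\Lambda$. Working with the maximal function $\mathcal{M}(g^p)$ restricted to balls contained in a fixed dilate of $B_0$, I would run a good-$\lambda$/layer-cake argument: at level $t$, on the ``good set'' $\{\mathcal{M}(g^p)\le t^p\}$ the function $u$ is, by the pointwise estimate above, Lipschitz with constant comparable to $r\,t$, so its contribution is controlled; on the ``bad set'' $\{x\in B_0:\mathcal{M}(g^p)(x)>t^p\}$ one performs a Whitney/$5r$-covering decomposition into balls $B_i$ of bounded overlap on which $\dashint_{B_i}g^p\,\dd\mu\simeq t^p$ and, crucially, with radius a definite fraction of that of $B_0$. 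Applying the $(p,p)$-Poincaré inequality on each $B_i$ to $u$ minus its average and summing, the smallness of the $B_i$ relative to $B_0$ yields a fixed gain factor $\theta<1$. Summing these estimates over dyadic levels $t\simeq 2^k t_0$ then produces a geometric series in $\theta^{k}$ which converges with room to spare; quantifying this room gives exactly an admissible $\varepsilon=\varepsilon(C_D,C_{PI},p)>0$ and the improved exponent $q=p-\varepsilon$. A final covering argument transfers the estimate from $B_0$ to all balls, with the dilation constant increased only by a controlled factor, yielding the weak $(1,q)$-Poincaré inequality.

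The main obstacle is the Whitney decomposition step and the bookkeeping around it: one must arrange the covering of the bad set so that each Whitney ball is a definite fraction smaller than $B_0$, in order to obtain a uniform gain $\theta<1$, and then control carefully the errors introduced by the bounded-overlap constant of the cover, by the iterated use of the $(p,p)$-Poincaré inequality on the Whitney balls, and by the passage across the boundary annulus of $B_0$ where the relevant balls may stick out. These errors must sum, over all dyadic levels, to a convergent series, and this convergence --- hence the very existence of a positive $\varepsilon$ --- is where the full hypotheses are used: completeness together with doubling provides properness and a workable covering lemma (Lemma~\ref{lemm3.3} being the quantitative input), and $p>1$ is what places the maximal operator just above its endpoint exponent. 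Tracking constants through the argument then makes the stated dependence of $\varepsilon$ and of the new Poincaré constants on $C_D$, $C_{PI}$ and $p$ transparent.
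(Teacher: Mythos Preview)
The paper does not prove this statement at all: Theorem~\ref{kz} is quoted verbatim from the literature, with the attribution ``as shown in \cite[Theorem~1.0.1]{KZ} by Keith and Zhong, see also \cite[Theorem~4.30]{BB}'', and no argument is given. So there is nothing to compare your proposal against; the paper treats this as a black box.

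Your outline is a fair high-level summary of the Keith--Zhong strategy, and the ingredients you list (Haj\l asz--Koskela chaining to get the pointwise maximal-function estimate, a good-$\lambda$/Whitney decomposition of the bad set, and a gain-per-scale that sums to a geometric series) are indeed the building blocks. That said, the sentence ``the smallness of the $B_i$ relative to $B_0$ yields a fixed gain factor $\theta<1$'' glosses over the genuinely hard step: the gain in Keith--Zhong does not come from a size comparison of Whitney balls to $B_0$, but from a delicate self-referential estimate in which one bounds the $L^p$-norm of the maximal function of $g^p$ on the bad set by a small multiple of itself plus a term of lower order, and this requires a stopping-time construction together with a careful use of the weak-type $(1,1)$ bound for $\mathcal{M}$ at the endpoint. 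Your sketch would not, as written, produce a positive $\varepsilon$; it reads more like the (much easier) Gehring-type argument for reverse H\"older inequalities, which is not what is needed here because upper gradients do not a priori satisfy a reverse H\"older inequality. If you intend to actually write out a proof rather than cite \cite{KZ}, you will need to engage with that self-improvement mechanism explicitly.
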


The next theorem states that the Poincar\'e inequality implies a Sobolev--Poincar\'e inequality (see \cite[Theorem 4.21 and Corollary 4.26]{BB}).

\begin{theorem}\label{sstars}
	Let $\mu$ be a doubling measure and let $X$ support a weak $(1,p)$-Poincar\'{e} inequality. We consider $Q=\log_2C_D$ as in \eqref{s},  
	$1\le p^*\le\frac{Qp}{Q-p}$ for $1\le p<Q$ and $1\leq p^*<\infty$ for $Q\leq p<\infty$.
	Then $X$ supports a weak $(p^*,p)$-Poincar\'{e} inequality,
	that is, there exists a constant $C=C(C_D,C_{PI},p)$ such that
	\[
		\left(\dashint_{B_r} |u-u_{B_r}|^{p^*} \,\dd\mu\right)^{\frac{1}{p^*}}
		\leq Cr\left(\dashint_{B_{2\lambda r}}g_u^p \,\dd\mu\right)^{\frac{1}{p}},
	\]
	for every ball $B_r$ in $X$ and every $u \in L^1_{\loc}(X)$. 
\end{theorem}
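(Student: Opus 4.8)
The plan is to reduce the inequality to a single pointwise bound obtained by iterating the weak $(1,p)$-Poincar\'e inequality along a dyadic chain of balls, and then to convert this into the $L^{p^*}$ estimate through a Hedberg-type maximal-function argument followed by a truncation. One may assume $g_u\in L^p$ on $B_{2\lambda r}(x_0)$ (otherwise the right-hand side is infinite), and by H\"older's inequality it suffices to treat the endpoint $p^*=\tfrac{Qp}{Q-p}$ when $p<Q$ and $p^*$ arbitrarily large when $p\ge Q$. \emph{Step 1 (telescoping).} Fix $B_r=B_r(x_0)$ and $u\in L^1_{\loc}(X)$. Since $\mu$ is doubling, $\mu$-a.e.\ point is a Lebesgue point of $u$, so $u_{B_{2^{-j}r}(x)}\to u(x)$ for a.e.\ $x\in B_r$. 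Writing $u(x)-u_{B_r(x)}$ as the telescoping series $\sum_{j\ge0}\bigl(u_{B_{2^{-j-1}r}(x)}-u_{B_{2^{-j}r}(x)}\bigr)$, estimating each increment by $C_D\dashint_{B_{2^{-j}r}(x)}\lvert u-u_{B_{2^{-j}r}(x)}\rvert\,\dd\mu$ via \eqref{doubling}, applying the weak $(1,p)$-Poincar\'e inequality on each ball $B_{2^{-j}r}(x)$, and comparing $u_{B_r(x)}$ with $u_{B_r(x_0)}$ by one further use of \eqref{doubling} and the Poincar\'e inequality, I obtain for a.e.\ $x\in B_r$
\[
\lvert u(x)-u_{B_r(x_0)}\rvert\le C\sum_{j=0}^{\infty}2^{-j}r\left(\dashint_{B_{2^{-j}\lambda r}(x)}g_u^p\,\dd\mu\right)^{1/p},\qquad C=C(C_D,C_{PI});
\]
here every ball on the right lies inside $B_{2\lambda r}(x_0)$, so the averages may be taken of $g_u^p\chi_{B_{2\lambda r}(x_0)}$.

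\emph{Step 2 (Hedberg estimate and endpoint weak-type bound).} Fixing $x\in B_r$ and cutting the series at a free index $k$, I bound the large-scale part ($j<k$) with Lemma~\ref{lemm3.3}, which gives $\mu(B_{2^{-j}\lambda r}(x))\ge C\,2^{-jQ}\mu(B_{2\lambda r}(x_0))$ and hence $\dashint_{B_{2^{-j}\lambda r}(x)}g_u^p\le C\,2^{jQ}\dashint_{B_{2\lambda r}(x_0)}g_u^p$, so that (as $p<Q$) the geometric sum is comparable to its last term; and I bound the small-scale part ($j\ge k$) by the restricted fractional maximal function $\mathcal M_{\delta}h(x):=\sup_{0<\rho\le2\lambda r}\rho^{\delta}\dashint_{B_\rho(x)}h\,\dd\mu$ evaluated at $h=g_u^p\chi_{B_{2\lambda r}(x_0)}$, for a fixed $\delta\in(0,p)$, the sum being comparable to its first term. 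Optimizing over $k$ yields a pointwise bound of the form
\[
\lvert u(x)-u_{B_r(x_0)}\rvert\le C\,r^{\gamma}\left(\dashint_{B_{2\lambda r}(x_0)}g_u^p\,\dd\mu\right)^{\theta/p}\bigl(\mathcal M_{\delta}(g_u^p\chi_{B_{2\lambda r}(x_0)})(x)\bigr)^{(1-\theta)/p},
\]
with $\theta=\tfrac{p-\delta}{Q-\delta}\in(0,1)$ and $\gamma$ fixed by dimensional analysis. Inserting this into the weak-type bound for $\mathcal M_\delta$ — of weak type $\bigl(1,\tfrac{Q}{Q-\delta}\bigr)$ thanks to \eqref{doubling} and Lemma~\ref{lemm3.3} — and simplifying the exponents, which collapse exactly to $p^*$, gives the sharp endpoint \emph{weak-type} Poincar\'e inequality
\[
\sup_{t>0}t\,\mu\bigl(\{x\in B_r:\lvert u(x)-u_{B_r(x_0)}\rvert>t\}\bigr)^{1/p^*}\le C\,r\,\mu(B_r)^{1/p^*}\left(\dashint_{B_{2\lambda r}(x_0)}g_u^p\,\dd\mu\right)^{1/p}.
\]

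\emph{Step 3 (truncation; case $p\ge Q$; main obstacle).} Upgrading this weak-type bound to the strong $(p^*,p)$-Poincar\'e inequality is the delicate point, and I would carry it out by the Maz'ya truncation device: it suffices to show $\lVert u\rVert_{L^{p^*}(B_r)}\le C\,r\bigl(\int_{B_{2\lambda r}(x_0)}g_u^p\,\dd\mu\bigr)^{1/p}$ for $u\ge0$ vanishing on a fixed fraction of $B_r$, the general statement then following by splitting $u$ minus its median into positive and negative parts. Applying the endpoint weak-type inequality of Step~2 to the truncations $v_j=\min\{(u-2^j)_+,2^j\}$, whose minimal $p$-weak upper gradient equals $g_u\chi_{\{2^j<u<2^{j+1}\}}$, gives $\mu(\{u\ge2^{j+1}\}\cap B_r)\lesssim r^{p^*}2^{-jp^*}\bigl(\int_{B_{2\lambda r}(x_0)}g_u^p\chi_{\{2^j<u<2^{j+1}\}}\,\dd\mu\bigr)^{p^*/p}$; then $\int_{B_r}u^{p^*}\,\dd\mu\lesssim\sum_j 2^{jp^*}\mu(\{u\ge2^{j+1}\}\cap B_r)$, combined with $p^*/p\ge1$ (so that $\sum_j a_j^{p^*/p}\le\bigl(\sum_j a_j\bigr)^{p^*/p}$) and $\sum_j\int_{B_{2\lambda r}(x_0)}g_u^p\chi_{\{2^j<u<2^{j+1}\}}\le\int_{B_{2\lambda r}(x_0)}g_u^p$, closes the estimate. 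When $p\ge Q$ the large-scale part of the chain no longer needs optimization — alternatively one first passes, via Theorem~\ref{kz}, to a weak $(1,s)$-Poincar\'e inequality with $s<Q$ — and the same truncation scheme produces the $(p^*,p)$-Poincar\'e inequality for every finite $p^*$. The main obstacle, as indicated, is precisely that the Hedberg/maximal-function machinery is not sharp at the borderline exponent: because the Hardy--Littlewood maximal operator fails to be bounded on $L^1$, Step~2 returns only weak-type control at $p^*=\tfrac{Qp}{Q-p}$, so reaching the strong endpoint inequality genuinely requires the truncation step (and, if one prefers, the self-improvement Theorem~\ref{kz}); a secondary difficulty is the bookkeeping of verifying that the exponents in Step~2 collapse exactly to $p^*$ and that all constants depend only on $C_D$, $C_{PI}$ and $p$.
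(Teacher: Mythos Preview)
The paper does not supply its own proof of this theorem: it is quoted as a known result, with the reference ``see \cite[Theorem 4.21 and Corollary 4.26]{BB}'' immediately preceding the statement. So there is no argument in the paper to compare against.

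Your proposal is, up to presentation, the standard proof of this self-improvement (the Haj\l asz--Koskela scheme that underlies the cited result in \cite{BB}): a dyadic chain/telescoping estimate giving a pointwise Riesz-potential bound, a Hedberg interpolation with the (fractional) maximal function yielding the endpoint \emph{weak}-type $(p^*,p)$ Poincar\'e inequality, and then Maz'ya's truncation to pass from weak to strong type. The outline is correct and the identification of the main obstacle (that the maximal-function step only gives weak type at the borderline, so truncation is genuinely needed) is accurate. Two minor points worth tightening if you write this out in full: in Step~2 make the exponent bookkeeping explicit (the choice of $\delta\in(0,p)$ is immaterial and the powers do collapse to $p^*$, but this is the place where errors creep in), and in Step~3 justify carefully that $g_{v_j}=g_u\chi_{\{2^j<u<2^{j+1}\}}$ $\mu$-a.e.\ for the minimal $p$-weak upper gradient (this is \cite[Corollary~2.21]{BB}).
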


Now we introduce the hypotheses set on the nonnegative coefficient function $a:X\to[0,\infty)$ in \eqref{J}. We suppose that $a$ is $\alpha$-H\"older continuous with respect to a quasi-distance related to the the underlying measure $\mu$. That is, there exists $0<\alpha\le1$, such that
\begin{equation}\label{aalpha}
	[a]_{\alpha}= \sup_{x,y \in \Omega, x\neq y} \dfrac{|a(x)-a(y)|}{\delta_{\mu}(x,y)^{\alpha}}<\infty,
\end{equation}
where $\delta_{\mu}$ is a quasi-distance defined as
\[
\delta_{\mu}(x,y)=\bigl(\mu(B_{d(x,y)}(x))+\mu(B_{d(x,y)}(y))\bigr)^{1/Q},\quad x,y \in X,\, x\ne y,
\]
with $Q=\log_2C_D$ as in \eqref{s} and $\delta_{\mu}(x,x)=0$. The following remark states that if $\mu$ is $Q$-Ahlfors--David regular, \eqref{aalpha} is equivalent to the standard H\"older continuity with exponent $\alpha$.
\begin{remark}
A measure is called Ahlfors--David regular, if there exist constants $0<C_1\le C_2<\infty$ such that
\begin{equation}\label{ahlfors}
C_1r^Q\le\mu(B_r(x))\le C_2r^Q,
\end{equation}
for every $x\in X$ and $0<r\le\diam(X)$.
If the measure $\mu$ is Ahlfors--David regular, then $\delta_{\mu}(x,y)\approx d(x,y)$ for every $x,y$ and, consequently,
$[a]_\alpha<\infty$ if and only if $a(\cdot)$ is H\"older continuous with the exponent $\alpha$.
\end{remark}
\textit{Throughout this paper, we assume 
\begin{equation*}
1\leq\frac{q}{p}\le1+\frac{\alpha}{Q}, 
\quad p>1,
\end{equation*}
where $\alpha \in (0,1]$ is as in \eqref{aalpha} and  $Q=\log_2C_D$ is as in \eqref{s}. We suppose that $(X, d, \mu)$ is a complete, metric measure space with metric $d$ and a doubling Borel regular measure $\mu$. We consider $\Omega\subset X$ an open subset and denote with $\Omega'$ any open and bounded subset of $\Omega$ such that $\Omega'\Subset \Omega$. Moreover, we assume that $X$ supports a weak $(1,p)$-Poincar\'{e} inequality. We use $C$ for positive constants, keeping also track of dependencies in parentheses. In particular, 
$C(\data)=C(C_D,C_{PI},\lambda,p,q,\alpha,[a]_\alpha).$}

\section{Energy estimates}\label{DeGiorgi}

We give the following definition of local quasiminimizer.

\begin{definition}\label{lqm}
	A function $u\in N^{1,1}_{\loc}({\Omega})$ with $H(\cdot,g_u)\in L^1_{\loc}(\Omega)$ is a local quasiminimizer on $\Omega$, if there exists a constant $K\geq 1$ such that 
	\[
		\int_{\Omega'\cap\{u\ne v\}}H(x,g_u)\,\dd \mu
		\leq K\int_{\Omega'\cap\{u\ne v\}}H(x,g_v)\,\dd \mu,
\]
	for every open subset $\Omega'\Subset\Omega$ and for every function $v\in N^{1,1}(\Omega')$ with $u-v\in  N^{1,1}_0(\Omega')$.
\end{definition}

In the rest of this section, we provide a double phase Caccioppoli inequality, that is a local energy estimate for the double phase problem.

\begin{lemma}\label{DeGiorgiLemma}
Assume that  $u\in N^{1,1}_{\loc}({\Omega})$ with $H(\cdot,g_u)\in L^1_{\loc}(\Omega)$  is a local quasiminimizer in $\Omega$. Then there exists $C= C(K,q)>0$ such that for any choice of concentric balls $B_\rho\subset B_R\Subset\Omega$ and all $k\in\mathbb{R}$, the following inequality 
\begin{align}\label{5.4}\int_{B_{\rho}}H(x,g_{(u-k)_{\pm}})\, \dd \mu	& \leq C\int_{B_R} H\left(x,\frac{(u-k)_{\pm}}{R-\rho}\right)\, \dd\mu
\end{align}
is satisfied, where $(u-k)_+= \max\{u-k,0\}$ and $(u-k)_-= \max\{-(u-k),0\}$ . 
\end{lemma}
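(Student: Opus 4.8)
The plan is to run the classical De Giorgi--Caccioppoli argument for quasiminima, with a hole--filling step to absorb the quasiminimality constant $K$, adapted to the non-homogeneous double phase integrand $H$.

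\textbf{Reduction and set-up.} Since $g_{-u}=g_u$ and $H(x,\cdot)$ is even, $-u$ is again a local quasiminimizer with the same constant $K$, and $\bigl((-u)-(-k)\bigr)_+=(u-k)_-$; hence it suffices to prove \eqref{5.4} for $w:=(u-k)_+$. Fix $\rho\le s<t\le R$, let $x_0$ be the common centre of the balls, and take the Lipschitz cut-off $\eta(x)=\min\bigl\{1,(t-d(x,x_0))_+/(t-s)\bigr\}$, so that $0\le\eta\le1$, $g_\eta\le 1/(t-s)$ a.e., $\eta\equiv1$ on $\overline{B_s}$ and $\eta\equiv0$ on $X\setminus B_t$. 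Set $v:=u-\eta w$. Then $v\in N^{1,1}(B_R)$ and $u-v=\eta w$ is supported in $B_t\subseteq B_R\Subset\Omega$, so $v$ is admissible in Definition \ref{lqm} with $\Omega'=B_R$. Moreover $\{u\ne v\}\subseteq B_t\cap\{u>k\}=:A_{k,t}$, while $A_{k,s}:=B_s\cap\{u>k\}\subseteq\{u\ne v\}$ (as $v\equiv k<u$ there), and $A_{k,s}=A_{k,t}\cap B_s$.

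\textbf{Gradient bound and basic estimate.} By the truncation identity $g_w=g_u\chi_{\{u>k\}}$ a.e., the Leibniz rule and the locality of minimal $p$-weak upper gradients (all from \cite{BB}), one has a.e.
\[
g_v\le g_u\,\chi_{\{u\le k\}}+\bigl((1-\eta)g_u+w\,g_\eta\bigr)\chi_{\{u>k\}} ,
\]
so that $g_v=0$ a.e. on $A_{k,s}$ and $g_v\le g_u+w/(t-s)$ a.e. on $A_{k,t}\setminus B_s$. Since $H(x,\cdot)$ is nondecreasing on $[0,\infty)$, $H(x,\xi+\zeta)\le 2^{q-1}\bigl(H(x,\xi)+H(x,\zeta)\bigr)$ for $\xi,\zeta\ge0$ (because $p\le q$), and $\{u\ne v\}\setminus A_{k,s}\subseteq A_{k,t}\setminus B_s$, we obtain
\[
\int_{\{u\ne v\}}H(x,g_v)\,\dd\mu\le 2^{q-1}\int_{A_{k,t}\setminus B_s}\Bigl(H(x,g_u)+H\bigl(x,\tfrac{w}{t-s}\bigr)\Bigr)\,\dd\mu .
\]
Feeding this, together with $A_{k,s}\subseteq\{u\ne v\}\subseteq A_{k,t}$, into Definition \ref{lqm} gives
\[
\int_{A_{k,s}}H(x,g_u)\,\dd\mu\le K2^{q-1}\int_{A_{k,t}\setminus B_s}H(x,g_u)\,\dd\mu+K2^{q-1}\int_{A_{k,t}}H\bigl(x,\tfrac{w}{t-s}\bigr)\,\dd\mu .
\]

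\textbf{Hole filling, iteration, and the main difficulty.} Add $K2^{q-1}\int_{A_{k,s}}H(x,g_u)\,\dd\mu$ to both sides, use $A_{k,s}\cup(A_{k,t}\setminus B_s)=A_{k,t}\subseteq B_R$, and divide by $1+K2^{q-1}$: with $\theta:=K2^{q-1}/(1+K2^{q-1})<1$ and $Z(s):=\int_{A_{k,s}}H(x,g_u)\,\dd\mu$ this reads
\[
Z(s)\le\theta\,Z(t)+\theta\int_{B_R}H\bigl(x,\tfrac{w}{t-s}\bigr)\,\dd\mu ,\qquad \rho\le s<t\le R .
\]
Here $Z$ is nondecreasing and bounded on $[\rho,R]$ because $H(\cdot,g_u)\in L^1_{\loc}(\Omega)$ and $B_R\Subset\Omega$ (if the right-hand side of \eqref{5.4} is infinite there is nothing to prove). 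Expanding $H(x,w/(t-s))=(t-s)^{-p}w^p+(t-s)^{-q}a(x)w^q$ and using $(t-s)^{-p}\le(R-\rho)^{q-p}(t-s)^{-q}$, the estimate becomes $Z(s)\le\theta\,Z(t)+\widetilde A\,(t-s)^{-q}$ with $\widetilde A=\theta\bigl((R-\rho)^{q-p}\int_{B_R}w^p\,\dd\mu+\int_{B_R}a\,w^q\,\dd\mu\bigr)$. A standard iteration lemma (see, e.g., \cite{GG1}) then yields $Z(\rho)\le C(q,\theta)\widetilde A\,(R-\rho)^{-q}=C(K,q)\int_{B_R}H\bigl(x,\tfrac{w}{R-\rho}\bigr)\,\dd\mu$, and since $Z(\rho)=\int_{B_\rho}H\bigl(x,g_{(u-k)_+}\bigr)\,\dd\mu$ this is exactly \eqref{5.4} for $(u-k)_+$; the $(u-k)_-$ case follows from the reduction above. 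The delicate point is this last step: for quasiminima the energy on the right of the quasiminimality inequality cannot be absorbed directly, so the contraction factor $\theta<1$ must be produced by hole filling, and the $p$- and $q$-parts of $H(x,w/(t-s))$ must be kept separate until the iteration lemma is applied with the larger exponent $q$ --- which is precisely what makes the final constant depend only on $K$ and $q$. Everything else (the sharp Leibniz/truncation/locality estimate for $g_v$ and the admissibility of $v$) is routine Newtonian calculus.
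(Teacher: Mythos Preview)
Your argument is correct and follows essentially the same route as the paper: Lipschitz cut-off, Leibniz/locality bound on $g_v$, quasiminimality, hole filling to produce $\theta<1$, and then the standard iteration lemma (the paper cites Lemma~6.1 of \cite{G}). The only cosmetic differences are that you run the estimate explicitly for all intermediate radii $\rho\le s<t\le R$ (the paper writes it only for $\rho,R$ and then invokes the iteration lemma, which tacitly requires the same) and that you absorb the $p$-term into the $q$-scale via $(t-s)^{-p}\le (R-\rho)^{q-p}(t-s)^{-q}$ before iterating, whereas the paper applies the two-exponent version of the iteration lemma directly with $\alpha=q$, $\beta=p$; both lead to the same constant dependence $C=C(K,q)$.
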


\begin{remark}\label{caccipoliremark} 
We note that inequality \eqref{5.4} with $(u-k)_+$ is equivalent to the following
\begin{align}\label{othercacciopoli}\int_{S_{k,\rho}}H(x,g_u)\, \dd \mu	& \leq C\int_{S_{k,R}} H\left(x,\frac{(u-k)_+}{R-\rho}\right)\, \dd\mu,
\end{align}
where $(u-k)_+= \max\{u-k,0\}$ and $S_{k, r}= \{x \in B_r \cap {\Omega}: u(x)>k\}$.
Analogously, inequality \eqref{5.4} with $(u-k)_-$ is equivalent to the following
\begin{align}\label{cacciopoliminus}\int_{s_{k,\rho}}H(x,g_u)\, \dd \mu	& \leq C\int_{s_{k,R}} H\left(x,\frac{(u-k)_+}{R-\rho}\right)\, \dd\mu,
\end{align}
where $(u-k)_-= \max\{-(u-k),0\}$ and $s_{k, r}= \{x \in B_r \cap {\Omega}: u(x)<k\}$. 

\end{remark}

\begin{proof}[Proof of Lemma \ref{DeGiorgiLemma}]
By Remark \ref{caccipoliremark}, it is enough to prove \eqref{othercacciopoli} and \eqref{cacciopoliminus}. We work with the case $(u-k)_+$, since  the other is treated similarly.

Let $\tau$ be a $\dfrac{1}{R-\rho}$-Lipschitz cutoff function so that $0\leq \tau \leq1$, $\tau=1$ on $B_\rho$ and the support of $\tau$ is contained in $B_R$.
We consider 
\begin{equation*}
w=u- \tau (u-k)_+= \begin{cases}
(1-\tau)(u-k)+k\quad \mbox{in } S_{k,R},\\
u \quad \quad \quad\quad\quad\quad\quad\quad\mbox{ otherwise.}\\
\end{cases}
\end{equation*}
Using Leibniz' rule,
\begin{equation*}
g_{w}\leq (u-k)g_{\tau}+(1-\tau)g_u\quad \quad \quad\quad\textrm{in }S_{k,R}.
\end{equation*}
We observe that
\begin{equation*}
g_{w} ^l\leq 
2^{l-1}\left(g_{u} ^l(1-\chi_{S_{k, \rho}})+\left(\dfrac{u-k}{R-\rho}\right)^l\right) \quad \mbox{ in $S_{k, R}$, where $l=p,q$ }.
\end{equation*}	
Since $u$ is a local quasiminimizer and $u-v\in N_{0}^{1,1}(S_{k, R})$, by Definition \ref{lqm} we obtain
\begin{align}\label{5.7}
\int_{S_{k, \rho}}H(x,g_u)\, \dd \mu  &\leq\int_{S_{k, R}}H(x,g_u)\, \dd\mu \leq K\int_{S_{k, R}}H(x,g_w)\, \dd\mu \nonumber \\
& \leq K\left(2^q\int_{S_{k, R}}H\left(x,\frac{u-k}{R-\rho}\right)\, \dd\mu+2^{q}\int_{S_{k, R}\setminus S_{k,\rho}}H(x,g_u)\, \dd \mu\right).
\end{align}	
\noindent By adding $K 2^{q}\int_{S_{k, \rho}}H(x,g_u)\, \dd \mu$ to both sides of (\ref{5.7}), we get 
\begin{align}\label{5.7bis}
(1&+K 2^{q}) \int_{S_{k, \rho}}H(x,g_u)\, \dd \mu   \leq K2^{q}\left(\int_{S_{k, R}}H\left(x,\frac{u-k}{R-\rho}\right)\, \dd\mu+ \int_{S_{k, R}}H(x,g_u)\, \dd \mu\right).
\end{align}	
Let $\theta= \dfrac{K 2^{q}}{1+K 2^{q}}<1$, then 
\begin{align*}
\int_{S_{k, \rho}}H(x,g_u)\, \dd \mu   \leq \theta\left(\int_{S_{k, R}}H\left(x,\frac{u-k}{R-\rho}\right)\, \dd\mu+ \int_{S_{k, R}}H(x,g_u)\, \dd \mu\right).
\end{align*}

At this point we can use Lemma 6.1 of \cite{G}, with $\alpha=q$, $\beta=p$, $$Z(\rho)=\int_{S_{k, \rho}}H(x,g_u)\, \dd \mu, \quad A=  \theta\int_{S_{k, R}}(u-k)^p\, \dd \mu, \quad B= \theta\int_{S_{k, R}}a(x)(u-k)^q\, \dd \mu,$$to get
\begin{align*}
	\int_{S_{k, \rho}}H(x,g_u)\, \dd \mu &	\leq C(q,K)\left(\theta\int_{S_{k, R}}H\left(x,\frac{u-k}{R-\rho}\right)\, \dd\mu\right).
\end{align*}
That is
\begin{align*}
	\int_{S_{k, \rho}}H(x,g_u)\, \dd \mu &	\leq C(q,K)\int_{S_{k, R}}H\left(x,\frac{u-k}{R-\rho}\right)\, \dd\mu.
\end{align*}
\end{proof}

\begin{remark}\label{Proposition 3.3KS}
	Let $\Omega$ be an open subset of $X$. Notice that if $u$ is a local quasiminimizer then $-u$ is also a local quasiminimizer. Therefore, by Lemma \ref{DeGiorgiLemma},  we get that $-u$ satisfies \eqref{5.4}.
\end{remark}

\begin{definition}[Double phase De Giorgi Classes]\label{DGclass} We write $DG_{H}^{\pm}(\Omega)$ for the collection of functions  $u\in N^{1,1}_{\loc}({\Omega})$ with $H(\cdot, g_u)\in L^1_{\loc}(\Omega)$ satisfying \eqref{5.4} for any choice of concentric balls $B_\rho\subset B_R\Subset\Omega$ and all $k\in\mathbb{R}$. The double phase De Giorgi class is then defined as
$$
DG_{H}(\Omega)=DG_{H}^{+}(\Omega)\cap DG_{H}^{-}(\Omega).
$$
\end{definition}

\section{Boundedness of local quasiminima}\label{Boundedness}

The aim of this section is to prove that local quasiminima, in the sense of the Definition \ref{lqm}, are locally bounded. Before starting the proof, we report the double phase Sobolev-Poincar\'e inequality for functions which vanish on a large set, which has been proven in \cite{KNP}.

\begin{lemma}[\cite{KNP}, Lemma 3.4] \label{lemma3.4JAC}Assume that $u\in N^{1,1}_{\textrm{loc}}(\Omega)$ with $H(\cdot, g_u)\in L^1_{\textrm{loc}}(\Omega)$. Let $B_r$ be a ball and assume that there exist $\gamma$, $0<\gamma<1$, such that 
\begin{equation*}
    \mu(\lbrace x\in B_r: \vert u(x)\vert>0\rbrace)\leq \gamma\mu(B_r).
\end{equation*}
Then, there exists a constant $C= C(\data, \gamma)$ and exponents $0<d_2<1\leq d_1<\infty$, with $d_1= d_1(\data)$ and $d_2= d_1(\data)$, such that

$$
\left(\dashint_{B_r} H\left(x,\frac{u}{r}\right)^{d_1}\dd \mu\right)^{1/d_1}\leq C\left(1+\Vert g_u\Vert_{L^{p}(B_{2\lambda r})}^{q-p}\mu(B_{2\lambda r})^{\frac{\alpha}{Q}-\frac{q-p}{p}}\right)\left(\dashint_{B_{2\lambda r}}H(x,g_u)^{d_2}\dd \mu\right)^{1/d_2},
$$
whenever $B_{2\lambda r}\Subset\Omega$. 
\end{lemma}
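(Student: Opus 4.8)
The plan is to estimate separately the $p$-phase contribution $(u/r)^p$ and the $q$-phase contribution $a(x)(u/r)^q$ of $H(x,u/r)$, using in both cases a Sobolev--Poincar\'e inequality for functions vanishing on a set of positive measure --- this is what yields the gain of integrability on the left-hand side --- and treating the coefficient through hypothesis \eqref{aalpha} by comparing $a$ with its infimum over the enlarged ball.

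First I would set up a Sobolev--Poincar\'e inequality for functions vanishing on a large set. By Theorem~\ref{kz} there is $\varepsilon=\varepsilon(\data)\in(0,p-1)$, taken as small as needed, such that $X$ supports a weak $(1,\tilde p)$-Poincar\'e inequality with $\tilde p:=p-\varepsilon$, and hence also a weak $(1,\tilde q)$-Poincar\'e inequality with $\tilde q:=q(1-\varepsilon/p)$ (note $\tilde q>p$ for $\varepsilon$ small); let $\tilde p^\ast,\tilde q^\ast$ be the associated Sobolev exponents from Theorem~\ref{sstars}, which by \eqref{pqcond} and $p>1$ can be chosen strictly larger than $q$ (arbitrarily large when $p\ge Q$). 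Combining Theorem~\ref{sstars} with the elementary bound $|w_{B_r}|\le(1-\gamma)^{-1}\dashint_{B_r}|w-w_{B_r}|\dd\mu$ then gives, for every $w$ vanishing on a set $E\subset B_r$ with $\mu(E)\ge(1-\gamma)\mu(B_r)$,
\[
\Big(\dashint_{B_r}|w/r|^{\,\tilde p^\ast}\dd\mu\Big)^{1/\tilde p^\ast}\le C(\data,\gamma)\Big(\dashint_{B_{2\lambda r}}g_w^{\,\tilde p}\dd\mu\Big)^{1/\tilde p},
\]
and likewise with $\tilde q,\tilde q^\ast$ in place of $\tilde p,\tilde p^\ast$. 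Our $u$ is of this type by hypothesis. Finally fix $d_2:=1-\varepsilon/p\in(0,1)$, so that $\tilde p=pd_2$ and $\tilde q=qd_2$, and choose $d_1=d_1(\data)$ with $1\le d_1<\min\{\tilde p^\ast,\tilde q^\ast\}/q$; the interval is nonempty exactly because $\tilde p^\ast,\tilde q^\ast>q$.

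For the estimate I would write $H(x,u/r)^{d_1}\le C\big((u/r)^{pd_1}+(a(x)(u/r)^q)^{d_1}\big)$ and bound the three resulting pieces. Applying the $\tilde p$-version above together with Jensen's inequality, and using $pd_1/\tilde p=d_1/d_2$ and $g_u^{\tilde p}=(g_u^p)^{d_2}\le H(x,g_u)^{d_2}$, one gets $\dashint_{B_r}(u/r)^{pd_1}\dd\mu\le C(\dashint_{B_{2\lambda r}}H(x,g_u)^{d_2}\dd\mu)^{d_1/d_2}$. For the $q$-phase, split $a(x)\le a_0+C[a]_\alpha\mu(B_{2\lambda r})^{\alpha/Q}$ on $B_{2\lambda r}$, with $a_0:=\essinf_{B_{2\lambda r}}a$, the oscillation bound coming from \eqref{aalpha} and doubling, since $\delta_\mu(x,y)^\alpha\le C\mu(B_{2\lambda r})^{\alpha/Q}$ for $x,y\in B_{2\lambda r}$. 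The $a_0$-piece is handled by the $\tilde q$-version: since $\tilde q=qd_2$, one has $a_0^{d_2}g_u^{\tilde q}=(a_0g_u^q)^{d_2}\le H(x,g_u)^{d_2}$ (using $a_0\le a$ pointwise and $a\,g_u^q\le H(x,g_u)$), whence $a_0^{d_1}\dashint_{B_r}(u/r)^{qd_1}\dd\mu\le C(\dashint_{B_{2\lambda r}}H(x,g_u)^{d_2}\dd\mu)^{d_1/d_2}$. The oscillation piece is the one producing the correction factor: applying the $\tilde p$-version with $qd_1\le\tilde p^\ast$, peeling off one power of $\dashint g_u^{\tilde p}$ (rewritten as $\mu(B_{2\lambda r})^{-(q-p)/\tilde p}\|g_u\|_{L^{\tilde p}(B_{2\lambda r})}^{q-p}$), using again $g_u^{\tilde p}=(g_u^p)^{d_2}\le H(x,g_u)^{d_2}$, and finally H\"older's inequality $\|g_u\|_{L^{\tilde p}(B_{2\lambda r})}\le\mu(B_{2\lambda r})^{1/\tilde p-1/p}\|g_u\|_{L^p(B_{2\lambda r})}$ to pass from $\tilde p$ back to $p$, one finds
\[
C\Big([a]_\alpha\mu(B_{2\lambda r})^{\alpha/Q}\Big)^{d_1}\dashint_{B_r}(u/r)^{qd_1}\dd\mu\le C\Big(\mu(B_{2\lambda r})^{\frac{\alpha}{Q}-\frac{q-p}{p}}\|g_u\|_{L^p(B_{2\lambda r})}^{q-p}\Big)^{d_1}\Big(\dashint_{B_{2\lambda r}}H(x,g_u)^{d_2}\dd\mu\Big)^{d_1/d_2}.
\]
Summing the three bounds, absorbing $[a]_\alpha$ into $C(\data)$, and taking the $1/d_1$-th power (with $(1+t^{d_1})^{1/d_1}\le 1+t$) gives the asserted inequality.

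The real difficulty is not any individual step but the coherent choice of the two exponents: $d_2$ is forced by the self-improvement margin $\varepsilon$ in the Poincar\'e inequality (Theorem~\ref{kz}), $d_1$ must simultaneously be admissible for all three Sobolev--Poincar\'e applications above, and the powers of the averaged integrals have to be arranged so that every right-hand side collapses onto the single quantity $(\dashint_{B_{2\lambda r}}H(x,g_u)^{d_2}\dd\mu)^{1/d_2}$ after the final roots are extracted. This is precisely where the sharp restriction \eqref{pqcond} on $q/p$ enters --- it guarantees $q\le p^\ast$, so that the $q$-phase can be reabsorbed into the $p$-phase gradient --- as do the structural identities $\tilde p=pd_2$ and $\tilde q=qd_2$. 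A secondary point is keeping the constant independent of $a$: this is obtained by freezing at $a_0=\essinf_{B_{2\lambda r}}a$ and comparing $a_0g_u^q$ with $H(x,g_u)$ pointwise, never invoking $\|a\|_{L^\infty}$.
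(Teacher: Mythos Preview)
The paper itself does not prove this lemma---it is quoted without proof from \cite{KNP}---so there is no in-paper argument to compare against directly. Your proof is correct and is exactly the expected one: it parallels the proof the paper does give for the frozen analogue (Theorem~\ref{FSPineq0largeSet}), the only new ingredient being the splitting $a(x)\le a_0+C[a]_\alpha\mu(B_{2\lambda r})^{\alpha/Q}$ with $a_0=\essinf_{B_{2\lambda r}}a$, which lets the $a_0$-part of the $q$-phase be absorbed into $H(x,g_u)$ (via $a_0\le a(x)$ pointwise) while the oscillation part produces the correction factor $\|g_u\|_{L^p(B_{2\lambda r})}^{q-p}\mu(B_{2\lambda r})^{\alpha/Q-(q-p)/p}$; this is precisely the mechanism in \cite{KNP}. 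Your exponent bookkeeping---$d_2=1-\varepsilon/p$ so that $\tilde p=pd_2$, $\tilde q=qd_2$, and $1\le d_1<\min\{\tilde p^\ast,\tilde q^\ast\}/q$, the latter interval being nonempty because \eqref{pqcond} forces $q<p^\ast$ and hence $q<\tilde p^\ast$ for $\varepsilon$ small---is accurate.
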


\begin{remark}\label{remarkDPPI}
Let $B_r$ be a ball, with $0<r<\frac{\diam(X)}{3}$ and centre $x_0$. Let $u\in N_0^{1,1}(B_r)$ with $H(\cdot, g_u)\in L^{1}_{\textrm{loc}}(B_r)$. Notice that, since $r<\frac{\diam(X)}{3}$, the set $X\setminus B_{\frac{3r}{2}}$ is non-empty. Furthermore, the Poincar\'e inequality implies that $\partial(B_{\frac{3r}{2}})$ is also non-empty, see \cite{BB}, i.e. there exists a point $x'$ on the sphere $\lbrace x\in X: d(x,x_0)=\frac{3r}{2}\rbrace$. Then, the set $B_{2r}\setminus B_r$ contains the ball $B'=B(x',\frac{r}{2})$. On the other hand, $B_{2r}\subset 7B'=B(x',\frac{7r}{2})$, Lemma \ref{lemm3.3} and the doubling property of $\mu$ implies
\begin{equation}\label{DPPI1}
\frac{\mu(B_r)}{\mu(B_{2r})}\leq 1-\frac{\mu(B')}{\mu(B_{2r})}\leq 1-C_D^{-2}\left(\frac{1}{4}\right)^{Q}=\gamma=\gamma(Q,C_D)<1.   
\end{equation}
Now, if we define $A=\lbrace x\in B_{2r}: \vert u(x)\vert>0\rbrace\subset B_r$. Since $u\in N^{1,1}_0(B_r)$, by \eqref{DPPI1}, we have $\mu(A)\leq\mu(B_r)\leq \gamma\mu(B_{2r})$. Therefore, by Lemma \ref{lemma3.4JAC} there exists $C= C(\data)$ and exponents $0<d_2<1\leq d_1<\infty$, with $d_1= d_1(\data)$ and $d_2= d_2(\data)$, such that
$$
\left(\dashint_{B_r} H\left(x,\frac{u}{r}\right)^{d_1}\dd \mu\right)^{1/d_1}\leq C\left(1+\Vert g_u\Vert_{L^{p}(B_{2\lambda r})}^{q-p}\mu(B_{2\lambda r})^{\frac{\alpha}{Q}-\frac{q-p}{p}}\right)\left(\dashint_{B_{2\lambda r}}H(x,g_u)^{d_2}\dd \mu\right)^{1/d_2}.
$$
Since $u\in N^{1,1}_0(B_r)$, then $g_u=0$ $\mu$-a.e. on $X\setminus B_r$. Therefore, by the doubling property of the measure, the right-hand side of the previous inequality becomes
\begin{align*}
\left(\dashint_{B_r} H\left(x,\frac{u}{r}\right)^{d_1}\dd \mu\right)^{1/d_1}&\leq C\left(1+\Vert g_u\Vert_{L^{p}(B_{r})}^{q-p}\mu(B_{2\lambda r})^{\frac{\alpha}{Q}-\frac{q-p}{p}}\right)\left(\dashint_{B_{r}}H(x,g_u)^{d_2}\dd \mu\right)^{1/d_2}\\
&\leq C\left(1+\Vert g_u\Vert_{L^{p}(B_{r})}^{q-p}\mu(B_{r})^{\frac{\alpha}{Q}-\frac{q-p}{p}}\right)\left(\dashint_{B_{r}}H(x,g_u)^{d_2}\dd \mu\right)^{1/d_2}.
\end{align*}
\end{remark}
As a consequence of the considerations made in the previous remark, we have the following double phase Sobolev-Poincar\'e inequality for functions with zero boundary values.
\begin{lemma}\label{DPPIzero} Let  $u\in N^{1,1}_{\textrm{loc}}(\Omega)$ be such that $H(\cdot, g_u)\in L^1_{\textrm{loc}}(\Omega)$. Let $B_r$ be a ball, with $0<r<\frac{\diam(X)}{3}$. Assume $u\in N^{1,1}_{0}(B_r)$.
Then, there exists a constant $C= C(\data)$ and exponents $0<d_2<1\leq d_1<\infty$, with $d_1= d_1(\data)$ and $d_2= d_1(\data)$, such that
$$
\left(\dashint_{B_r} H\left(x,\frac{u}{r}\right)^{d_1}\dd \mu\right)^{1/d_1}\leq C\left(1+\Vert g_u\Vert_{L^{p}(B_{r})}^{q-p}\mu(B_{r})^{\frac{\alpha}{Q}-\frac{q-p}{p}}\right)\left(\dashint_{B_{r}}H(x,g_u)^{d_2}\dd \mu\right)^{1/d_2}.
$$
\end{lemma}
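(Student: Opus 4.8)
The plan is to derive Lemma \ref{DPPIzero} essentially as a corollary of Remark \ref{remarkDPPI}, where the entire chain of estimates has in fact already been carried out. The only genuine task remaining is to justify that the hypotheses of Lemma \ref{lemma3.4JAC} are met, on the correct ball, and that the error term on the right-hand side can be absorbed into a final inequality with the stated dependence of constants. So the proof is short and mostly bookkeeping.

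First I would apply the covering/geometry argument of Remark \ref{remarkDPPI} verbatim: since $0<r<\diam(X)/3$, the sphere $\{x:d(x,x_0)=\tfrac{3r}{2}\}$ is non-empty by properness together with the connectivity consequences of the Poincar\'e inequality, so one can pick $x'$ there and form the ball $B'=B(x',r/2)\subset B_{2r}\setminus B_r$. The containment $B_{2r}\subset 7B'$ combined with Lemma \ref{lemm3.3} and doubling yields the density bound $\mu(B_r)/\mu(B_{2r})\le\gamma=1-C_D^{-2}4^{-Q}<1$, with $\gamma=\gamma(Q,C_D)$ independent of $r$. Setting $A=\{x\in B_{2r}:|u(x)|>0\}$ and using $u\in N^{1,1}_0(B_r)$ to get $A\subset B_r$, we conclude $\mu(A)\le\mu(B_r)\le\gamma\mu(B_{2r})$, so Lemma \ref{lemma3.4JAC} applies on the ball $B_{2r}$ (playing the role of ``$B_r$'' there), producing the exponents $0<d_2<1\le d_1<\infty$ depending only on $\data$ and the stated inequality with integrals over $B_{2r}$ and $B_{4\lambda r}$.

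Next I would use $g_u=0$ $\mu$-a.e. on $X\setminus B_r$ — which holds because $u\in N^{1,1}_0(B_r)$ — to replace every integral of a function of $g_u$ over the larger ball by the same integral over $B_r$, and to replace the average $\dashint_{B_r}$ on the left by $\dashint_{B_r}$ after noting $\dashint_{B_{2r}}$ of a function supported in $B_r$ only differs by the doubling factor $\mu(B_{2r})/\mu(B_r)$, which is bounded by $C_D^{\log_2(2)}=C_D$ and hence absorbed into $C(\data)$. The same doubling remark lets one trade $\mu(B_{4\lambda r})$ for $\mu(B_r)$ in the factor $\mu(\cdot)^{\alpha/Q-(q-p)/p}$ up to a constant depending only on $C_D,\lambda,p,q,\alpha$; here one should be slightly careful that the exponent $\tfrac{\alpha}{Q}-\tfrac{q-p}{p}$ is $\ge 0$ precisely by the standing hypothesis $q/p\le 1+\alpha/Q$, so enlarging or shrinking the ball changes the factor by a bounded multiplicative constant in either direction. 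Collecting everything gives exactly the claimed estimate.

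The one step requiring a little care — and the closest thing to an obstacle — is the double application of doubling to pass from $B_{2r}$, $B_{4\lambda r}$ down to $B_r$ while keeping the constant in the form $C(\data)$: one must check that the number of doubling steps is controlled purely by $\lambda$ (hence by $\data$) and that the measure exponent is nonnegative so the comparison goes through cleanly. Beyond that, there is nothing deep here; the lemma is a packaging of Remark \ref{remarkDPPI}, and indeed one could simply write: ``This is an immediate consequence of the chain of inequalities established in Remark \ref{remarkDPPI}.'' I would keep the proof to that one-line reduction plus a sentence recalling the density bound \eqref{DPPI1}, since repeating the full computation would be redundant.
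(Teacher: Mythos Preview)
Your proposal is correct and follows exactly the paper's approach: the paper presents Lemma \ref{DPPIzero} as an immediate consequence of the chain of inequalities in Remark \ref{remarkDPPI}, with no separate proof given. Your write-up spells out precisely those steps---the geometric density bound \eqref{DPPI1} on $B_{2r}$, the application of Lemma \ref{lemma3.4JAC}, the use of $g_u=0$ $\mu$-a.e.\ outside $B_r$, and the doubling adjustments---so there is nothing to add.
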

The following two lemmata show some energy estimates.  

\begin{lemma}\label{lemmaestimate1}  Let  $u\in N^{1,1}_{\textrm{loc}}(\Omega)$ be such that $H(\cdot, g_u)\in L^1_{\textrm{loc}}(\Omega)$. Assume $u \in DG_H(\Omega)$. Let $0<\frac{R}{2}<\rho<s\leq R\leq \min\lbrace 1, \frac{\diam(X)}{6}\rbrace$ and let $B_{\rho}\subset B_s\subseteq B_R\Subset \Omega$ be concentric balls. Then, there exist a constant $C= C(\data)$ and an exponent $d_2= d_2(data)< 1$ such that
    \begin{align*}
\int_{B_{\rho}}&H\left(x,(u-k)_+\right)  \dd\mu \leq \frac{C}{(s-\rho)^{q-p}}\left(1+\Vert u\Vert_{N^{1,p}(B_R)}^{q-p}\mu(B_R)^{\frac{\alpha}{Q}-\frac{q-p}{p}}\right)\left(\dfrac{\mu(S_{k, s})}{\mu(B_{s})}\right)^{\frac{1}{d_2}-1}\int_{B_s}H\left(x,\frac{(u-k)_+}{s-\rho}\right) \dd \mu.
\end{align*}
\end{lemma}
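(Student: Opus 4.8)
The plan is to combine the double phase Caccioppoli inequality from Lemma \ref{DeGiorgiLemma} (in the form \eqref{othercacciopoli} of Remark \ref{caccipoliremark}) with the zero-boundary Sobolev--Poincar\'e inequality of Lemma \ref{DPPIzero}, applied to a suitably truncated and cut-off version of $(u-k)_+$. First I would fix a Lipschitz cutoff function $\eta$ with $0\le\eta\le1$, $\eta=1$ on $B_\rho$, $\mathrm{supp}\,\eta\subset B_s$ and $g_\eta\le C/(s-\rho)$, and set $v=\eta(u-k)_+$. Then $v\in N^{1,1}_0(B_s)$, so Lemma \ref{DPPIzero} applies on $B_s$ (here we use $s\le R\le\min\{1,\diam(X)/6\}$, which guarantees $s<\diam(X)/3$). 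This produces the higher-integrability exponent $d_1\ge1$ on the left and the subunitary exponent $d_2<1$ on the right, together with the factor $1+\|g_v\|_{L^p(B_s)}^{q-p}\mu(B_s)^{\alpha/Q-(q-p)/p}$.

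The second step is to bound all the pieces of that inequality by quantities involving only $(u-k)_+$ and $g_u$ on $B_s$. On the left, since $\eta=1$ on $B_\rho$ and $d_1\ge1$, Jensen's inequality gives $\int_{B_\rho}H(x,(u-k)_+)\,\dd\mu\le\mu(B_s)\left(\dashint_{B_s}H(x,v)^{d_1}\dd\mu\right)^{1/d_1}$ after renormalizing the averages (one must be a little careful about the homogeneity in $r$ versus the ``$1$'' appearing as $(u-k)_+$ rather than $(u-k)_+/s$; since $s\le1$ the scale factors only help). On the right, by Leibniz' rule $g_v\le\eta g_{(u-k)_+}+(u-k)_+g_\eta\le g_u\chi_{S_{k,s}}+\frac{C}{s-\rho}(u-k)_+$, so $\|g_v\|_{L^p(B_s)}\le C\|u\|_{N^{1,p}(B_R)}$ (crudely, using $s-\rho\ge\ldots$ or simply absorbing $(s-\rho)^{-1}$ into the stated $(s-\rho)^{q-p}$ prefactor and the $\|u\|_{N^{1,p}}$ term), which accounts for the factor $1+\|u\|_{N^{1,p}(B_R)}^{q-p}\mu(B_R)^{\alpha/Q-(q-p)/p}$; similarly $H(x,g_v)\le C\,H(x,g_u)\chi_{S_{k,s}}+\frac{C}{(s-\rho)^q}H(x,(u-k)_+)$, and then the Caccioppoli inequality \eqref{othercacciopoli} is used to replace $\int_{S_{k,s}}H(x,g_u)\,\dd\mu$ by $\int_{B_s}H(x,(u-k)_+/(s-\rho))\,\dd\mu$ (enlarging the ball slightly and using $R/2<\rho$ to keep all radii comparable).

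The third step is the volume factor. Because $g_v$ and $v$ are supported in $S_{k,s}=\{x\in B_s:u>k\}$, the right-hand integrand $H(x,g_u)^{d_2}$ may be written as $H(x,g_u)^{d_2}\chi_{S_{k,s}}$; applying H\"older's inequality with exponents $1/d_2$ and $1/(1-d_2)$ on $B_s$ converts $\dashint_{B_s}H(x,g_u)^{d_2}\chi_{S_{k,s}}\,\dd\mu$ into $\left(\dashint_{B_s}H(x,g_u)\,\dd\mu\right)^{d_2}\left(\mu(S_{k,s})/\mu(B_s)\right)^{1-d_2}$, and then raising to the power $1/d_2$ yields the factor $\left(\mu(S_{k,s})/\mu(B_s)\right)^{1/d_2-1}$ that appears in the statement, with the remaining $H(x,g_u)$ term again handled by Caccioppoli. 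Collecting the scale powers $(s-\rho)^{-1}$ — one from $g_\eta$, $q$ more hidden in the $H(\cdot,(u-k)_+)$ versus $H(\cdot,(u-k)_+/(s-\rho))$ comparison, and $q-p$ net surviving after the inequality is arranged in the displayed form — gives the prefactor $C(s-\rho)^{-(q-p)}$; the constant depends only on $\data$ through $C_D,C_{PI},\lambda,p,q,\alpha,[a]_\alpha$ and $K$ (which is itself absorbed into $\data$-dependence via the De Giorgi class).

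\textbf{Main obstacle.} The delicate point is the bookkeeping of the double phase functional under multiplication by a cutoff and under the non-homogeneous scaling in $H$: because $H(x,t z)=t^pz^p+a(x)t^qz^q$ is not homogeneous, the powers of $(s-\rho)$ and of $\mu(B_s)$ (through the exponent $\alpha/Q-(q-p)/p$, which is nonnegative precisely by the hypothesis $q/p\le1+\alpha/Q$) must be tracked separately in the $p$-part and the $q$-part and then reconciled; one also needs $s\le1$ so that passing between $H(x,(u-k)_+)$ and $H(x,(u-k)_+/s)$ does not cost anything. Keeping the $g_v\le\eta g_{(u-k)_+}+(u-k)_+g_\eta$ estimate and the Caccioppoli substitution compatible so that no circularity arises — i.e. always estimating $g_u$-integrals over $S_{k,s}$ by $(u-k)_+$-integrals over a slightly larger fixed ball contained in $B_R$ — is the part that requires care, but it is routine once the chain of inequalities is set up in the right order.
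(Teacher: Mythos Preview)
Your overall strategy --- cut off $(u-k)_+$ to obtain a function with zero boundary values, apply Lemma~\ref{DPPIzero}, use Leibniz' rule together with the Caccioppoli inequality to control the gradient term, and extract the measure ratio via H\"older with exponent $1/d_2$ --- is exactly the route the paper takes. However, there is a concrete gap in the way you place the cutoff and invoke Caccioppoli.

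You take $\eta$ supported in $B_s$ with $\eta\equiv1$ on $B_\rho$, so $v=\eta(u-k)_+\in N^{1,1}_0(B_s)$ and Lemma~\ref{DPPIzero} is applied on $B_s$. After Leibniz' rule and H\"older you must bound $\int_{B_s}H(x,g_{(u-k)_+})\,\dd\mu$. But the Caccioppoli inequality \eqref{othercacciopoli} passes from a smaller ball to a strictly larger one: applying it at radius $s$ forces the right-hand side onto some $B_{s'}$ with $s'>s$, with denominator $s'-s$ rather than $s-\rho$. This does not match the target estimate, whose right-hand side is $\int_{B_s}H\bigl(x,(u-k)_+/(s-\rho)\bigr)\,\dd\mu$, and it fails outright in the admissible endpoint case $s=R$, where there is no room to enlarge inside $B_R$. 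Your own remark that one should pass ``to a slightly larger fixed ball contained in $B_R$'' is precisely where the argument breaks: that larger ball is not $B_s$.

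The paper resolves this by inserting an intermediate radius $t=\tfrac{s+\rho}{2}$. The cutoff $\tau$ is supported in $B_t$ (still $\equiv1$ on $B_\rho$, with $g_\tau\le\tfrac{2}{s-\rho}$), Lemma~\ref{DPPIzero} is applied on $B_t$, and then Caccioppoli is used from $B_t$ to $B_s$, which is legitimate since $t<s$ and yields exactly $\int_{B_s}H\bigl(x,(u-k)_+/(s-\rho)\bigr)\,\dd\mu$ after absorbing a harmless factor $2^q$. The measure ratios $\mu(B_t)/\mu(B_\rho)$ and $\mu(B_s)/\mu(B_t)$ are handled by doubling (as you anticipated), and the prefactor $(s-\rho)^{-(q-p)}$ comes cleanly from the estimate $\|g_w\|_{L^p(B_t)}^{q-p}\le C(s-\rho)^{-(q-p)}\|u\|_{N^{1,p}(B_R)}^{q-p}$. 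With this single adjustment --- work on $B_t$ rather than $B_s$ --- your outline becomes a complete proof identical in spirit to the paper's.
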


\begin{proof}
 Let $0<\frac{R}{2}<\rho<s\leq R\leq \min\lbrace 1, \frac{\diam(X)}{6}\rbrace$. To simplify notation, we define $t=\frac{s+\rho}{2}$. Notice that, by definition, $\rho<t<s$. Since $u\in DG_{H}(\Omega)$, in particular $u\in DG_{H}^+(\Omega)$. Therefore, by the double phase Caccioppoli inequality \eqref{5.4}, we have
\begin{align}\label{ks2}
	\int_{B_t} H(x, g_{(u-k)_+}) \dd \mu&\leq  C \int_{B_s} H\left(x,\frac{(u-k)_+}{s-t}\right)\, \dd\mu \nonumber\\
& \leq  C 2^{q}  \int_{B_s} H\left(x,\frac{(u-k)_+}{s-\rho}\right)\, \dd\mu \nonumber\\
&= C \int_{B_s} H\left(x,\frac{(u-k)_+}{s-\rho}\right)\, \dd\mu,
\end{align}
where $C$ depends on $K$ and $q$. Let $\tau$ be a $\dfrac{1}{s-\rho}$-Lipschitz cutoff function so that $0\leq \tau \leq1$, $\tau=1$ on $B_\rho$ and the support of $\tau$ is contained in $B_t$. Let $w=\tau (u-k)_+\in N_0^{1,1}(B_t)$. By Leibniz rule, we have $$g_w \leq g_{(u-k)_+}\tau +(u-k)_+ g_{\tau}\leq g_{(u-k)_+}+ \frac{1}{s-\rho}(u-k)_+,\qquad\textrm{on }B_t.$$
Using inequality \eqref{ks2}, we get
\begin{align}\label{15notes}
	\int_{B_t} H(x,g_w) \dd \mu&\leq  2^{q-1}\int_{B_t} H(x,g_{(u-k)_+}) \dd \mu+2^{q-1}\int_{B_{t}} H\left(x,\frac{(u-k)_+}{s-\rho}\right)\dd \mu \nonumber \\  
	&\leq C \, 2^{q-1} \int_{B_s} H\left(x,\frac{(u-k)_+}{s-\rho}\right)\dd \mu 
+2^{q-1} \int_{B_s}H\left(x,\frac{(u-k)_+}{s-\rho}\right) \dd \mu \nonumber \\  
	&=  C \int_{B_s}H\left(x,\frac{(u-k)_+}{s-\rho}\right) \dd \mu.
\end{align}

\noindent By H\"older inequality, the doubling property, the definition of $w$ and Lemma \ref{DPPIzero}, there are a constant $C=C({\rm data})$ and exponents $0<d_2<1\leq d_1<\infty$, such that

\begin{align}\label{ks40}
\dashint_{B_{\rho}}H\left(x,(u-k)_+\right)  \dd\mu&\leq \dashint_{B_{\rho}}H\left(x,\frac{(u-k)_+}{t}\right) \dd\mu\nonumber\\
&\leq   
\left(\dashint_{B_{\rho}}H\left(x,\frac{(u-k)_+}{t}\right) ^{d_1} \dd\mu\right)^{\frac{1}{d_1}}\nonumber\\ 
&=\left(\dashint_{B_{\rho}}H\left(x,\frac{\tau(u-k)_+}{t}\right) ^{d_1} \dd\mu\right)^{\frac{1}{d_1}}\nonumber\\ 
 &= \left(\dashint_{B_{\rho}}H\left(x,\frac{w}{t}\right) ^{d_1} \dd\mu\right)^{\frac{1}{d_1}}\nonumber\\ 
 &\leq \left(\dfrac{\mu\big(B_{t}\big)}{\mu(B_{\rho})}\right)^{\frac{1}{d_1}}\left(\dashint_{B_{t}} H\left(x,\frac{w}{t}\right) ^{d_1} \dd\mu\right)^{\frac{1}{d_1}} \nonumber\\	
 &\leq C\left(1+\|g_w\|^{q-p}_{L^p(B_{t})} \mu(B_{t})^{\frac{\alpha}{Q}-\frac{q-p}{p}}\right)\left(\dashint_{B_{t}}H\left(x,g_w\right) ^{d_2} \dd\mu\right)^{\frac{1}{d_2}}.
\end{align}
In the second to last inequality, we used the doubling property to estimate 
\begin{equation}\label{starextra}
    \left(\dfrac{\mu\big(B_{t}\big)}{\mu(B_{\rho})}\right)^{\frac{1}{d_1}}\leq C,
\end{equation}
where $C$ depends on the exponent $Q$ in \eqref{s}. By H\"{o}lder inequality, we have
\begin{align}\label{ks4first}
\left(\dashint_{B_{t}}H\left(x,g_w\right) ^{d_2} \dd\mu\right)^{\frac{1}{d_2}}&=\mu(B_{t})^{-\frac{1}{d_2}}\left(\int_{S_{k, t}}H\left(x,g_w\right)^{d_2} \dd\mu\right)^{\frac{1}{d_2}}\nonumber\\
&\leq \mu(B_{t})^{-\frac{1}{d_2}}\mu(S_{k, t})^{\frac{1}{d_2}-1}\int_{S_{k, t}}H\left(x,g_w\right) \dd\mu.
\end{align}
By \eqref{ks40}, \eqref{ks4first} and \eqref{15notes}, we get 
\begin{align}\label{ks4} 
&\int_{B_{\rho}}H\left(x,(u-k)_+\right)  \dd\mu\nonumber\\
&\leq C\left(1+\|g_w\|^{q-p}_{L^p(B_{t})} \mu(B_{t})^{\frac{\alpha}{Q}-\frac{q-p}{p}}\right)\left(\dfrac{\mu(S_{k, t})}{\mu(B_{t})}\right)^{\frac{1}{d_2}-1} \int_{S_{k, t}}H\left(x,g_w\right) \dd\mu \nonumber\\
 &\leq C\left(1+\|g_w\|^{q-p}_{L^p(B_t)} \mu(B_R)^{\frac{\alpha}{Q}-\frac{q-p}{p}}\right)\left(\dfrac{\mu(S_{k, t})}{\mu(B_{t})}\right)^{\frac{1}{d_2}-1}\int_{B_s}H\left(x,\frac{(u-k)_+}{s-\rho}\right) \dd \mu.
\end{align}
On the other hand, notice that 
\begin{align*}
\Vert g_w\Vert_{L^p(B_t)}^p&=\int_{B_t}g_w^p\dd\mu\nonumber\\
&\leq \int_{B_t}\left(g_{(u-k)_+}+\frac{(u-k)_+}{s-\rho}\right)^p\dd\mu\nonumber\\
&\leq 2^{p-1}\left(\int_{B_t}g_{(u-k)_+}^p\dd\mu+\int_{B_t}\left(\frac{(u-k)_+}{s-\rho}\right)^p\dd\mu\right)\\
&\leq C\left(\Vert g_u\Vert_{L^p(B_t)}^{p}+\frac{\Vert u\Vert^p_{L^p(B_t)}}{(s-\rho)^p}\right)\\
&\leq \frac{C}{(s-\rho)^p}\left(\Vert g_u\Vert_{L^p(B_t)}^{p}+\Vert u\Vert^p_{L^p(B_t)}\right).
\end{align*}
Since $u\in N^{1,1}_{\textrm{loc}}(\Omega)$, $H(\cdot,g_u)\in L^{1}_{\textrm{loc}}(\Omega)$, $B_R\Subset \Omega$ and the nature of the double phase functional, then $u\in N^{1,p}(B_R)$. Therefore,
\begin{align*}
\Vert g_w\Vert_{L^p(B_t)}^{q-p}&\leq\frac{C}{(s-\rho)^{q-p}}\left(\Vert g_u\Vert_{L^p(B_t)}^{p}+\Vert u\Vert^p_{L^p(B_t)}\right)^{\frac{q-p}{p}}\\
&\leq\frac{C}{(s-\rho)^{q-p}}\left(\Vert g_u\Vert_{L^p(B_t)}+\Vert u\Vert_{L^p(B_t)}\right)^{q-p}\\
&=\frac{C}{(s-\rho)^{q-p}}\Vert u\Vert_{N^{1,p}(B_t)}^{q-p}\\
&\leq\frac{C}{(s-\rho)^{q-p}}\Vert u\Vert_{N^{1,p}(B_R)}^{q-p}.
\end{align*}
By \eqref{ks4} and the last inequality, we then obtain
\begin{align}\label{new1}
\int_{B_{\rho}}&H\left(x,(u-k)_+\right)  \dd\mu\nonumber\\
&\leq \frac{C}{(s-\rho)^{q-p}}\left(1+\Vert u\Vert_{N^{1,p}(B_R)}^{q-p}\mu(B_R)^{\frac{\alpha}{Q}-\frac{q-p}{p}}\right)\left(\dfrac{\mu(S_{k, t})}{\mu(B_{t})}\right)^{\frac{1}{d_2}-1}\int_{B_s}H\left(x,\frac{(u-k)_+}{s-\rho}\right) \dd \mu.
\end{align}
\noindent Furthermore, we observe that $\mu(S_{k, t})\leq \mu(S_{k, s})$ and $\frac{1}{d_2}-1>0$. Thus, by \eqref{new1} we get
\begin{align*}
\int_{B_{\rho}}&H\left(x,(u-k)_+\right)  \dd\mu \leq \frac{C}{(s-\rho)^{q-p}}\left(1+\Vert u\Vert_{N^{1,p}(B_R)}^{q-p}\mu(B_R)^{\frac{\alpha}{Q}-\frac{q-p}{p}}\right)\left(\dfrac{\mu(S_{k, s})}{\mu(B_{t})}\right)^{\frac{1}{d_2}-1}\int_{B_s}H\left(x,\frac{(u-k)_+}{s-\rho}\right) \dd \mu.
\end{align*}
Lastly, by the doubling property of the measure (as we did in \eqref{starextra}), we can conclude
\begin{align*}
\int_{B_{\rho}}&H\left(x,(u-k)_+\right)  \dd\mu \leq \frac{C}{(s-\rho)^{q-p}}\left(1+\Vert u\Vert_{N^{1,p}(B_R)}^{q-p}\mu(B_R)^{\frac{\alpha}{Q}-\frac{q-p}{p}}\right)\left(\dfrac{\mu(S_{k, s})}{\mu(B_{s})}\right)^{\frac{1}{d_2}-1}\int_{B_s}H\left(x,\frac{(u-k)_+}{s-\rho}\right) \dd \mu,
\end{align*}
as wanted.
\end{proof}

\begin{lemma}\label{lemmaestimate2} Let $\Omega''$ be an open and bounded subset of $\Omega$ such that $\Omega''\Subset \Omega$. Assume $u\in N^{1,1}_{\textrm{loc}}(\Omega)$ with $H(\cdot, g_u)\in L^1_{\textrm{loc}}(\Omega)$ belongs to $DG_H(\Omega)$. Let $0<\frac{R}{2}<\rho<s\leq R\leq \min\lbrace 1, \frac{\diam(X)}{6}\rbrace$, and concentric balls $B_{\rho}\subset B_s\subseteq B_R\subset \Omega''$. Then, for any positive numbers $h<k$, there exists a constant $C= C(\data, \Omega'', \Vert u\Vert_{N^{1,p}(\Omega'')})$, and an exponent $0<\theta= \theta(data)$, such that
   \begin{equation*}
	\int_{S_{k,\rho}}H(x,u-k) \dd\mu\leq \frac{C}{(s-\rho)^{q-p}}\left(\dfrac{\mu(S_{k, s})}{\mu(B_s)}\right)^{\theta} \int_{S_{h,s}} H\left(x,\frac{u-h}{s-\rho}\right) \dd\mu.
\end{equation*}
\end{lemma}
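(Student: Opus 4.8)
The plan is to deduce Lemma \ref{lemmaestimate2} from Lemma \ref{lemmaestimate1} by exploiting the freedom in the truncation level. The key observation is that on the set $S_{k,\rho}$ one has $u>k>h$, hence $(u-h)_+ \geq (u-k)_+ > 0$ there, and more importantly $S_{k,s}\subseteq S_{h,s}$. Applying Lemma \ref{lemmaestimate1} with the level $h$ in place of $k$ gives
\[
\int_{B_\rho} H\big(x,(u-h)_+\big)\,\dd\mu \leq \frac{C}{(s-\rho)^{q-p}}\Big(1+\Vert u\Vert_{N^{1,p}(B_R)}^{q-p}\mu(B_R)^{\frac{\alpha}{Q}-\frac{q-p}{p}}\Big)\Big(\tfrac{\mu(S_{h,s})}{\mu(B_s)}\Big)^{\frac1{d_2}-1}\int_{B_s} H\Big(x,\tfrac{(u-h)_+}{s-\rho}\Big)\,\dd\mu .
\]
The left-hand side dominates $\int_{S_{k,\rho}} H(x,(u-h)_+)\,\dd\mu$, and on $S_{k,\rho}$ we have $u-h>k-h$, which lets us absorb the gap $k-h$ and pass to $H(x,u-k)$; this is the standard "two-level" trick that produces the extra decay factor.

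More precisely, first I would record the elementary pointwise estimate on $S_{k,\rho}$: since $u-h = (u-k)+(k-h) \geq k-h > 0$, we get $(u-h)_+ \geq k-h$, and also $(u-h)_+ \leq \frac{k-h+(u-k)_+}{k-h}(u-h)_+$... — more cleanly, one uses $H(x,u-k) \leq H(x,(u-h)_+)$ pointwise on $S_{k,\rho}$ (as $0 \leq u-k \leq u-h$ there), so
\[
\int_{S_{k,\rho}} H(x,u-k)\,\dd\mu \leq \int_{S_{k,\rho}} H\big(x,(u-h)_+\big)\,\dd\mu \leq \int_{B_\rho} H\big(x,(u-h)_+\big)\,\dd\mu .
\]
Then I would apply Lemma \ref{lemmaestimate1} (with $h$ as the truncation level) to bound the last integral, noting that on the right-hand side $(u-h)_+ \leq u-h$ and the integration set $B_s$ may be restricted to $S_{h,s}$, since $(u-h)_+ = 0$ outside it. This yields exactly the claimed inequality with $\theta = \frac{1}{d_2}-1 > 0$ (recall $0<d_2<1$), once the constant $C(\data)\big(1+\Vert u\Vert_{N^{1,p}(B_R)}^{q-p}\mu(B_R)^{\frac{\alpha}{Q}-\frac{q-p}{p}}\big)$ is absorbed into a single constant. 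Since $B_R \subset \Omega'' \Subset \Omega$ with $\Omega''$ bounded, $\Vert u\Vert_{N^{1,p}(B_R)} \leq \Vert u\Vert_{N^{1,p}(\Omega'')}$ and $\mu(B_R) \leq \mu(\Omega'') < \infty$ (using $R \leq 1$ to control the sign of the exponent $\frac{\alpha}{Q}-\frac{q-p}{p}$, which is nonnegative by \eqref{pqcond}), so the constant depends only on $\data$, $\Omega''$ and $\Vert u\Vert_{N^{1,p}(\Omega'')}$, as stated.

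The only mildly delicate point is the last one: making the dependence of $C$ on the geometry transparent, i.e. checking that replacing $\mu(B_R)^{\frac{\alpha}{Q}-\frac{q-p}{p}}$ by a quantity controlled by $\mu(\Omega'')$ and $R\le 1$ is legitimate. Because \eqref{pqcond} guarantees $\frac{\alpha}{Q}-\frac{q-p}{p}\geq 0$, the map $t\mapsto t^{\frac{\alpha}{Q}-\frac{q-p}{p}}$ is nondecreasing, so $\mu(B_R)^{\frac{\alpha}{Q}-\frac{q-p}{p}} \leq \max\{1,\mu(\Omega'')\}^{\frac{\alpha}{Q}-\frac{q-p}{p}}$, a finite constant depending only on $\data$ and $\Omega''$. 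Everything else is a direct substitution, so I do not anticipate a genuine obstacle; the proof is short and essentially a corollary of Lemma \ref{lemmaestimate1}.
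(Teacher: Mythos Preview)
Your argument has a genuine gap: you applied Lemma \ref{lemmaestimate1} at the \emph{wrong} truncation level. By choosing the level $h$ in Lemma \ref{lemmaestimate1}, the density factor you obtain is
\[
\left(\dfrac{\mu(S_{h,s})}{\mu(B_s)}\right)^{\frac{1}{d_2}-1},
\]
whereas the statement of Lemma \ref{lemmaestimate2} requires the strictly smaller factor
\[
\left(\dfrac{\mu(S_{k,s})}{\mu(B_s)}\right)^{\theta},
\]
since $h<k$ implies $S_{k,s}\subseteq S_{h,s}$. Your bound is therefore weaker than what is claimed, and it is precisely the factor with the \emph{higher} level $k$ that is needed downstream (see the iteration in Theorem \ref{Theorem 4.2}, where one takes $k=k_{i+1}$, $h=k_i$ and the decay of $\mu(S_{k_{i+1},\cdot})$ drives the convergence).

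The fix is simple and is exactly what the paper does: apply Lemma \ref{lemmaestimate1} at level $k$, so that the left-hand side is already $\int_{S_{k,\rho}}H(x,u-k)\,\dd\mu$ and the density factor is $\bigl(\mu(S_{k,s})/\mu(B_s)\bigr)^{\frac{1}{d_2}-1}$; then use the monotonicity $(u-k)_+\le(u-h)_+$ (valid since $h<k$) and the monotonicity of $H(x,\cdot)$ only on the \emph{right-hand side} integral, replacing $\int_{B_s}H\bigl(x,\tfrac{(u-k)_+}{s-\rho}\bigr)\,\dd\mu$ by $\int_{S_{h,s}}H\bigl(x,\tfrac{u-h}{s-\rho}\bigr)\,\dd\mu$. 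Your treatment of the constant, i.e.\ absorbing $1+\Vert u\Vert_{N^{1,p}(B_R)}^{q-p}\mu(B_R)^{\frac{\alpha}{Q}-\frac{q-p}{p}}$ into $C(\data,\Omega'',\Vert u\Vert_{N^{1,p}(\Omega'')})$ via \eqref{pqcond} and $B_R\subset\Omega''$, is correct and matches the paper.
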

\begin{proof}
Let $0<\frac{R}{2}<\rho<s\leq R\leq \min\lbrace 1, \frac{\diam(X)}{6}\rbrace$. By Lemma \ref{lemmaestimate1}, there exists a constant $C= C(\data)$, and an exponent $d_2= d_2(\data)< 1$, such that
    \begin{align}\label{eqbeforeomega}
	\int_{B_{\rho}}&H\left(x,(u-k)_+\right)  \dd\mu\nonumber\\
	& \leq \frac{C}{(s-\rho)^{q-p}}\left(1+\Vert u\Vert_{N^{1,p}(B_R)}^{q-p}\mu(B_R)^{\frac{\alpha}{Q}-\frac{q-p}{p}}\right)\left(\dfrac{\mu(S_{k, s})}{\mu(B_{s})}\right)^{\frac{1}{d_2}-1}\int_{B_s}H\left(x,\frac{(u-k)_+}{s-\rho}\right) \dd \mu.
\end{align}

Since, by hypothesis $\frac{q}{p}\leq 1+ \frac{\alpha}{Q}$, then $\frac{\alpha}{Q}-\frac{q-p}{p}\geq 0$. Thus, by the definition of $\Omega''$ and the assumption $B_R\subset \Omega''$, we obtain
\begin{align}\label{5notes}
\left(1+\Vert u\Vert_{N^{1,p}(B_R)}^{q-p}\mu(B_R)^{\frac{\alpha}{Q}-\frac{q-p}{p}}\right)\leq \left(1+\Vert u\Vert_{N^{1,p}(\Omega'')}^{q-p}\mu(\Omega')^{\frac{\alpha}{Q}-\frac{q-p}{p}}\right)\leq C,
\end{align} where $C=C(\Omega'', \Vert u\Vert_{N^{1,p}(\Omega'')})$.
Now, using \eqref{eqbeforeomega} and \eqref{5notes}, we get
\begin{align}\label{algo}
\int_{B_{\rho}}H\left(x,(u-k)_+\right)  \dd\mu \leq \frac{C}{(s-\rho)^{q-p}}\left(\dfrac{\mu(S_{k, s})}{\mu(B_{s})}\right)^{\frac{1}{d_2}-1}\int_{B_s}H\left(x,\frac{(u-k)_+}{s-\rho}\right) \dd \mu.
\end{align}
with $C= C(\data, \Omega'', \Vert u\Vert_{N^{1,p}(\Omega'')})$.

Let $h<k$, then $(u-k)_+\leq (u-h)_+$. Therefore, we have
\begin{align*}
\int_{S_{k,\rho}}H(x,u-k) \dd\mu &\leq  \frac{C}{(s-\rho)^{q-p}}\left(\dfrac{\mu(S_{k, s})}{\mu(B_{\rho})}\right)^{\frac{1}{d_2}-1} \int_{S_{h,s}} H\left(x,\frac{u-h}{s-\rho}\right) \dd \mu.
\end{align*}
That is
\begin{equation*}
	\int_{S_{k,\rho}}H(x,u-k) \dd\mu\leq \frac{C}{(s-\rho)^{q-p}}\left(\dfrac{\mu(S_{k, s})}{\mu(B_s)}\right)^{\theta} \int_{S_{h,s}} H\left(x,\frac{u-h}{s-\rho}\right) \dd\mu,
\end{equation*}
where $\theta=\frac{1}{d_2}-1>0$. 
    
\end{proof}

At this point, using Lemma \ref{lemmaestimate2}, we obtain the following theorem, which states that a function belonging to $DG_H(\Omega)$ is essentially locally bounded from above. 
A corresponding result for local quasiminima for double phase problems in the metric context was first proved in \cite{NP}, Theorem 4.5, assuming that the coefficient function $a$ is bounded away from zero. Since in this notes we work with weaker assumptions on the coefficient function, we focus on an intrinsic approach.

\begin{theorem}\label{Theorem 4.2}
	We consider $\Omega'' \Subset \Omega$ open and bounded. Assume $u\in N^{1,1}_{\textrm{loc}}(\Omega)$ with $H(\cdot, g_u)\in L^1_{\textrm{loc}}(\Omega)$ belongs to $DG_H(\Omega)$. Let $0<R\leq \min\lbrace 1, \frac{\diam(X)}{6}\rbrace$, such that $B_R \subset \Omega''$, and $k_0 \in \mathbb{R}$. Then, there exists $C=C(\data, R, \Omega'', \Vert u\Vert_{N^{1,p}(\Omega'')})>0$ such
	that $$\esssup_{B_{\frac{R}{2}}} u \leq k_0 + C \left(\dashint_{B_R} H(x,(u-k_0)_+) \, \dd\mu\right)^{\frac{1}{p}}.$$
	
\end{theorem}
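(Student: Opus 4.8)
I would prove Theorem \ref{Theorem 4.2} by a De Giorgi iteration carried out directly on the energy of the super-level sets, with Lemma \ref{lemmaestimate2} providing the basic decay step. Fix $k_0\in\mathbb R$ and abbreviate $Y_0:=\dashint_{B_R}H(x,(u-k_0)_+)\,\dd\mu$. For every integer $j\ge0$ introduce the decreasing radii $\rho_j:=\tfrac R2\bigl(1+2^{-j}\bigr)$, so that $\rho_0=R$, $\rho_j\downarrow\tfrac R2$ and $\rho_j-\rho_{j+1}=R\,2^{-j-2}$, together with the increasing truncation levels $k_j:=k_0+d\,(1-2^{-j})$, where $d>0$ is a parameter to be fixed at the very end; thus $k_j\uparrow k_0+d$ and $k_{j+1}-k_j=d\,2^{-j-1}$. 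The quantities to be iterated are
\[
A_j:=\int_{B_{\rho_j}}H\bigl(x,(u-k_j)_+\bigr)\,\dd\mu,\qquad Y_j:=\frac{A_j}{\mu(B_R)},
\]
and the aim is to show that, for a suitable choice of $d$, $Y_j\to0$.

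The core step is the recursive inequality. Applying Lemma \ref{lemmaestimate2} with $\rho=\rho_{j+1}$, $s=\rho_j$, $k=k_{j+1}$, $h=k_j$ (the radius conditions hold since $R/2<\rho_{j+1}<\rho_j\le R\le\min\{1,\diam(X)/6\}$ and $B_{\rho_j}\subset B_R\subset\Omega''$) yields a constant $C=C(\data,\Omega'',\Vert u\Vert_{N^{1,p}(\Omega'')})$ and an exponent $\theta=\theta(\data)>0$ with
\[
A_{j+1}\le\frac{C}{(\rho_j-\rho_{j+1})^{q-p}}\left(\frac{\mu(S_{k_{j+1},\rho_j})}{\mu(B_{\rho_j})}\right)^{\theta}\int_{S_{k_j,\rho_j}}H\!\left(x,\frac{u-k_j}{\rho_j-\rho_{j+1}}\right)\dd\mu.
\]
Because $R\le1$ we have $(\rho_j-\rho_{j+1})^{-1}=2^{j+2}R^{-1}\ge1$, so the non-homogeneity of $H$ gives $H\bigl(x,z/(\rho_j-\rho_{j+1})\bigr)\le(2^{j+2}R^{-1})^{q}H(x,z)$, turning the last integral into $(2^{j+2}R^{-1})^{q}A_j$. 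For the measure factor I would use $H(x,z)\ge|z|^{p}$: on $S_{k_{j+1},\rho_j}$ one has $(u-k_j)_+\ge k_{j+1}-k_j=d\,2^{-j-1}$, hence $(d\,2^{-j-1})^{p}\,\mu(S_{k_{j+1},\rho_j})\le A_j$, and together with the doubling bound $\mu(B_{\rho_j})\ge\mu(B_{R/2})\ge C_D^{-1}\mu(B_R)$ this estimates the bracket by $\bigl(C_D\,2^{(j+1)p}A_j/(d^{p}\mu(B_R))\bigr)^{\theta}$. Collecting all powers of $2^{j}$ into a single base $b=b(\data)>1$ and rewriting in terms of $Y_j$, the factors $\mu(B_R)$ cancel and one is left with
\[
Y_{j+1}\le\frac{C_*}{d^{p\theta}}\,b^{\,j}\,Y_j^{\,1+\theta},\qquad C_*=C_*\bigl(\data,R,\Omega'',\Vert u\Vert_{N^{1,p}(\Omega'')}\bigr).
\]

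To finish, I would apply the standard fast geometric convergence lemma (see, e.g., \cite{G}): if $Y_0\le(C_*/d^{p\theta})^{-1/\theta}b^{-1/\theta^{2}}$, equivalently $d^{p}\ge C_*^{1/\theta}b^{1/\theta^{2}}Y_0$, then $Y_j\to0$. Choosing $d:=\bigl(C_*^{1/\theta}b^{1/\theta^{2}}\bigr)^{1/p}Y_0^{1/p}=C\bigl(\dashint_{B_R}H(x,(u-k_0)_+)\,\dd\mu\bigr)^{1/p}$ makes this condition hold. Since $B_{R/2}\subset B_{\rho_j}$ and $(u-(k_0+d))_+\le(u-k_j)_+$, we get $\int_{B_{R/2}}H(x,(u-(k_0+d))_+)\,\dd\mu\le A_j=\mu(B_R)Y_j\to0$, hence $(u-(k_0+d))_+=0$ $\mu$-a.e.\ on $B_{R/2}$ (using $H(x,\cdot)\ge|\cdot|^{p}$), which is precisely $\esssup_{B_{R/2}}u\le k_0+d$, the asserted inequality. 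I expect the main obstacle to be the bookkeeping in the recursion: one must track precisely how the non-homogeneity of $H$ (a dilation inside $H$ costing a power $q$ instead of $p$) and the scalings in $R$ and in $\mu(B_R)$ propagate through Lemma \ref{lemmaestimate2}, so that the constant multiplying $b^{j}Y_j^{1+\theta}$ is genuinely independent of $j$ and the powers of $\mu(B_R)$ cancel after dividing by $\mu(B_R)$; the rest is the classical argument.
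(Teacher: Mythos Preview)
Your proposal is correct and follows essentially the same De Giorgi iteration as the paper's own proof: identical choice of radii $\rho_j$ and levels $k_j$, the same application of Lemma \ref{lemmaestimate2} at each step, the same Chebyshev-type bound $\mu(S_{k_{j+1},\rho_j})\le (d\,2^{-j-1})^{-p}A_j$ coming from $H(x,z)\ge|z|^p$, and the same appeal to the standard geometric-convergence lemma. The only cosmetic difference is the normalisation of the iterated quantity (you divide by $\mu(B_R)$ to form $Y_j$, the paper divides by $d^{p}$ to form $\psi_i$); both lead to the same smallness condition on $d$ and hence the same conclusion.
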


\begin{proof}
For $n\in \mathbb{N}\cup \{0\}$, let $\rho_{n}=\frac{R}{2}\left(1+\frac{1}{2^{n}}\right)\leq R$ and $k_{n}=k_{0}+d\left(1-\frac{1}{2^n}\right)$, where $d>0$ will be chosen later. Then, $\rho_{0}=R$, $\rho_{n}\searrow \frac{R}{2}$ and $k_n\nearrow k_0+d$.
We apply Lemma \ref{lemmaestimate2} with $\rho=\rho_{i+1}$, $R=\rho_i$, $k=k_{i+1}$ and $h=k_i$ and we get
\begin{align}\label{8notes}
 \int_{S_{k_{i+1},\rho_{i+1}}}H(x,u-k_{i+1}) \dd\mu&\leq \frac{C}{(\rho_i-\rho_{i+1})^{q-p}}\left(\dfrac{\mu(S_{k_{i+1}, \rho_i})}{\mu(B_{R})}\right)^{\theta} \int_{S_{k_i,\rho_i}} H\left(x,\frac{u-k_i}{\rho_i-\rho_{i+1}}\right) \dd\mu\nonumber \\
 & = \frac{C}{(R 2^{-i}2^{-2})^{q-p}}\left(\dfrac{\mu(S_{k_{i+1}, \rho_i})}{\mu(B_{R})}\right)^{\theta} \int_{S_{k_i,\rho_i}} H\left(x,\frac{u-k_i}{R 2^{-i}2^{-2}}\right) \dd\mu\nonumber \\
& = \frac{C2^{i(2q-p)}}{R^{q-p}}\left(\dfrac{\mu(S_{k_{i+1}, \rho_i})}{\mu(B_{R})}\right)^{\theta} \int_{S_{k_i,\rho_i}} H\left(x,\frac{u-k_i}{R}\right) \dd\mu
\nonumber\\
& \leq \frac{C4^{iq}}{R^{2q-p}}\left(\dfrac{\mu(S_{k_{i+1}, \rho_i})}{\mu(B_R)}\right)^{\theta} \int_{S_{k_i,\rho_i}} H(x,u-k_i) \dd\mu
\nonumber\\
& \leq \frac{C4^{iq}}{R^{2q}}\left(\dfrac{\mu(S_{k_{i+1}, \rho_i})}{\mu(B_R)}\right)^{\theta} \int_{S_{k_i,\rho_i}} H(x,u-k_i) \dd\mu.
\end{align}
We observe that 
\begin{align*}
    d^{-p}(k_{i+1}-k_i)^p \mu(S_{k_{i+1}, \rho_i})&=
d^{-p}\int_{S_{k_{i+1}, \rho_i}}(k_{i+1}-k_i)^p \dd\mu\\
&\leq d^{-p}\int_{S_{k_{i+1}, \rho_i}}H(x, u-k_i) \dd\mu\\
&\leq d^{-p}\int_{S_{k_i, \rho_i}}H(x, u-k_i) \dd\mu.
\end{align*}
So, we have 
\begin{equation*}
    \psi_i= d^{-p}\int_{S_{k_i, \rho_i}}H(x, u-k_i) \dd\mu \geq d^{-p}(k_{i+1}-k_i)^p \mu(S_{k_{i+1}, \rho_i}). 
\end{equation*}
This implies
\begin{equation}\label{9notes}
 \mu(S_{k_{i+1}, \rho_i})\leq \psi_i d^{p}(k_{i+1}-k_i)^{-p}. 
\end{equation}
Therefore, by \eqref{8notes} and \eqref{9notes} we obtain 
\begin{align*}
   d^p \psi_{i+1}&\leq \frac{C4^{iq}}{R^{2q}} \left(\dfrac{\mu(S_{k_{i+1}, \rho_i})}{\mu(B_R)}\right)^{\theta} d^p\psi_i\nonumber\\ 
    &\leq \frac{C4^{iq}}{R^{2q}} \left(\psi_i d^{p}(k_{i+1}-k_i)^{-p}\right)^{\theta}d^p\psi_i \mu(B_R)^{-\theta}\nonumber\\
     &= \frac{C4^{iq}}{R^{2q}} (d2^{-1-i})^{-p\theta} d^{p\theta}\psi_i^{1+\theta} d^p \mu(B_R)^{-\theta}\nonumber\\
    &\leq  \frac{C4^{(1+\theta)qi}}{R^{2q}} \psi_i^{1+\theta}d^p \mu(B_R)^{-\theta}.
\end{align*}
That is,
\begin{equation}
 \psi_{i+1} \leq\frac{C4^{(1+\theta)qi}}{R^{2q}} \psi_i^{1+\theta}\mu(B_R)^{-\theta},
\end{equation}
for every $i\geq0$, where $C=C({\rm data}, \Omega'',\Vert u\Vert_{N^{1,p}(\Omega'')})$.
By using a standard iteration lemma (see Lemma 7.1, \cite{G})
we get \begin{equation*}
 \lim_{i\to \infty}\psi_i=\lim_{i\to \infty}d^{-p}\int_{S_{k_i, \rho_i}}H(x, u-k_i) \dd\mu=0,   
\end{equation*} provided that $d=CR^{\frac{-2q}{p\theta}} \left(\dashint_{B_R}H(x, (u-k_0)_+) \dd\mu\right)^{\frac{1}{p}}>0$. As a consequence, \begin{equation*}
    \int_{B_{\frac{R}{2}}} H(x, (u-(k_0+d))_+ \dd\mu=0
\end{equation*}
and so
\begin{equation}\label{quasiboundedness}
    u\leq k_0+d \quad\mbox{almost everywhere in $B_{\frac{R}{2}}$.}
\end{equation}
We conclude that
\begin{align*}
   \esssup_{B_{\frac{R}{2}}}u &\leq k_0 +d= k_0+ C \left(\dashint_{B_R}H\left(x,(u-k_0)_+\right) \, \dd\mu\right)^{\frac{1}{p}},
\end{align*}
where $C=C(\data, R, \Omega'', \Vert u\Vert_{N^{1,p}(\Omega'')})$.
\end{proof}
As a corollary of Theorem \ref{Theorem 4.2}, we obtain the main result of this section. 

\begin{corollary}[Local boundedness] Assume $u\in N^{1,1}_{\textrm{loc}}(\Omega)$ with $H(\cdot, g_u)\in L^1_{\textrm{loc}}(\Omega)$ is a local quasiminimizer. Then, 
$$
\Vert u\Vert_{L^{\infty}(\Omega')}<\infty
$$
holds, whenever $\Omega'\Subset\Omega$ is open and bounded. That is, $u$ is locally bounded.
\end{corollary}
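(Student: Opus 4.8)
The plan is to deduce the corollary from Theorem \ref{Theorem 4.2} applied to $u$ and to $-u$, together with a finite covering of $\Omega'$; the one point that is not immediate is the finiteness of the averaged $H$-integral on the right-hand side of Theorem \ref{Theorem 4.2}, and that is where I expect the (modest) work to lie.

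First, a local quasiminimizer $u$ satisfies the double phase Caccioppoli inequality \eqref{5.4} for both $(u-k)_+$ and $(u-k)_-$ by Lemma \ref{DeGiorgiLemma}, so $u\in DG_H(\Omega)$; and by Remark \ref{Proposition 3.3KS}, $-u$ is again a local quasiminimizer, hence likewise $-u\in DG_H(\Omega)$. Thus Theorem \ref{Theorem 4.2} applies to both $u$ and $-u$.

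Next, fix an open bounded $\Omega'\Subset\Omega$ and interpose an open bounded $\Omega''$ with $\Omega'\Subset\Omega''\Subset\Omega$. Since $(X,d,\mu)$ is proper, $\overline{\Omega'}$ is compact and $\dist(\overline{\Omega'},X\setminus\Omega'')>0$, so one may fix $R\le\min\{1,\diam(X)/6\}$ with $B_R(x)\subset\Omega''$ for all $x\in\overline{\Omega'}$ and cover $\overline{\Omega'}$ by finitely many balls $B_{R/2}(x_1),\dots,B_{R/2}(x_N)$ with $x_j\in\overline{\Omega'}$. Applying Theorem \ref{Theorem 4.2} with $k_0=0$ to $u$ and to $-u$ on each $B_R(x_j)$, and using $H(x,u_\pm)\le H(x,|u|)$, gives
\[
\esssup_{B_{R/2}(x_j)}|u|\le C\left(\dashint_{B_R(x_j)}H(x,|u|)\,\dd\mu\right)^{1/p},\qquad j=1,\dots,N,
\]
with one and the same constant $C=C(\data,R,\Omega'',\|u\|_{N^{1,p}(\Omega'')})$. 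Maximising over $j$ then yields $\|u\|_{L^\infty(\Omega')}<\infty$, once the integrals on the right are known to be finite.

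It remains to check $H(\cdot,|u|)\in L^1_{\loc}(\Omega)$, which is the main obstacle. From $H(x,g_u)\ge g_u^p$ and $H(\cdot,g_u)\in L^1_{\loc}(\Omega)$ we get $g_u\in L^p_{\loc}(\Omega)$, hence $u\in N^{1,p}_{\loc}(\Omega)$ (so also $\|u\|_{N^{1,p}(\Omega'')}<\infty$, as already used in Lemma \ref{lemmaestimate1}); applying the Sobolev--Poincar\'e inequality of Theorem \ref{sstars} locally on balls $B\Subset\Omega$ upgrades this to $u\in L^{p^*}_{\loc}(\Omega)$, where $p^*$ can be taken equal to $\frac{Qp}{Q-p}$ if $p<Q$ and arbitrarily large if $p\ge Q$. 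A short computation from \eqref{pqcond}, using $\alpha\le1$ and $p>1$, gives $q\le\frac{(Q+1)p}{Q}\le\frac{Qp}{Q-p}$, so $u\in L^q_{\loc}(\Omega)$. Finally $a$ is bounded on $\Omega''$: for $x,y\in\Omega''$ the balls $B_{d(x,y)}(x)$ and $B_{d(x,y)}(y)$ lie in a fixed ball of finite measure, so $\delta_\mu$ is bounded on $\Omega''\times\Omega''$ and \eqref{aalpha} forces $\sup_{\Omega''}a<\infty$. Hence $H(x,|u|)=|u|^p+a(x)|u|^q$ is integrable on $\Omega''$, in particular $\dashint_{B_R(x_j)}H(x,|u|)\,\dd\mu<\infty$ for each $j$, which completes the argument. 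Everything except this last integrability check is routine; within it, the only genuinely non-trivial points are the numerical inequality $q\le p^*$ and the local use of Theorem \ref{sstars}.
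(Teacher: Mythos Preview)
Your proof is correct and follows essentially the same strategy as the paper: invoke Theorem \ref{Theorem 4.2} (on $u$ and $-u$), then cover $\overline{\Omega'}$ by finitely many half-balls. The paper's own proof is in fact terser than yours and simply asserts $\|u\|_{L^\infty(B_{R/2})}\le C_R<\infty$ from \eqref{quasiboundedness}, without explicitly checking that the right-hand side in Theorem \ref{Theorem 4.2} is finite; your extra paragraph verifying $H(\cdot,|u|)\in L^1_{\loc}(\Omega)$ via the Sobolev--Poincar\'e embedding $u\in L^{p^*}_{\loc}$ together with the numerical bound $q\le p^*$ and the local boundedness of $a$ fills a gap the paper leaves implicit. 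This is a genuine improvement in rigour rather than a different route.
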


\begin{proof}
Since $u$ is a local quasiminimizer, then by Lemma \ref{DeGiorgiLemma}, $u\in DG(\Omega)$. Therefore, $u$ satisfies the double phase Caccioppoli inequality \eqref{5.4}.
Let $\Omega'\Subset\Omega''\Subset\Omega$, and $0<R<\min\lbrace 1, \frac{\diam(X)}{6}\rbrace$ such that $B_R\subset \Omega''$. By \eqref{quasiboundedness}, we get 
$$
    \Vert u\Vert_{L^{\infty}(B_{\frac{R}{2}})}\leq C_R=C_R(\data, R, \Omega'', \Vert u\Vert_{N^{1,p}(\Omega'')})<\infty.
$$
    Since $\overline{\Omega'}$ is compact, we can cover it with a finite number of such balls, and conclude that
    $\Vert u\Vert_{L^{\infty}(\Omega')}<\infty$ 
    Therefore, $u$ is locally bounded.
\end{proof}

\section{Estimates for frozen functionals}\label{frozen}
In this section we collect the necessary regularity results for quasiminima of the so-called frozen functionals used in subsequent sections. We consider functionals of the type

\begin{equation}\label{eq1frozen}
\int_{\Omega}H_0(g_u)\dd\mu=\int_{\Omega}(g_u^p+a_0g_u^q)\dd\mu,
\end{equation}
where $a_0\geq 0$ is a constant.

In the next lemma we collect some useful properties, which follow from the seminal work of Lieberman \cite{Lieberman}.  Let $H_0(t)=t^p+a_0t^q$. We denote by $h_0=H_0'$. Therefore, $h_0(t)=pt^{p-1}+a_0qt^{q-1}$.
\begin{lemma}\label{lemmafrozen1} The following properties hold.
\begin{enumerate}
\item $H_0$ is $C^2$ and convex.
\item $\frac{t h_0(t)}{q}\leq H_0(t)\leq th_0(t)$, if $t\geq 0$.
\item $\frac{H_0(a)}{H_0(b)}\leq\frac{a}{b}$ if $b\geq a>0$.
\item $h_0(t)\leq h_0(2t)\leq 2^{q-1}h_0(t)$, if $t\geq 0$.
\item $ah_0(b)\leq ah_0(a)+b h_0(b)$, if $a,b>0$.
\end{enumerate}
\end{lemma}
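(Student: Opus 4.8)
The statement collects five elementary analytic facts about the one-variable function $H_0(t)=t^p+a_0 t^q$ with $p,q>1$ and $a_0\geq 0$, together with its derivative $h_0(t)=pt^{p-1}+a_0 q t^{q-1}$. The plan is to verify each item directly, treating the monomials $t^p$ and $a_0 t^q$ separately whenever possible, since each claimed inequality is preserved under taking nonnegative linear combinations of functions that satisfy it.

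For item (1), $H_0$ is a sum of two power functions $t^p$, $t^q$ with exponents $>1$, hence $C^2$ on $(0,\infty)$ (and on all of $\mathbb{R}$ if we read $t^p$ as $|t|^p$, or restrict to $t\geq0$, which is all that is used), and convex because $H_0''(t)=p(p-1)t^{p-2}+a_0 q(q-1)t^{q-2}\geq 0$ for $t\geq0$. For item (2), compute $t h_0(t)=p t^p + a_0 q t^q$; since $1<p\le q$ we have $\frac{1}{q}(pt^p+a_0qt^q)\le t^p+a_0t^q\le pt^p+a_0qt^q$, where the left inequality uses $p/q\le 1$ and $q/q=1$, and the right uses $p\ge 1$, $q\ge 1$. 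For item (3), with $b\ge a>0$ write $\frac{H_0(a)}{H_0(b)}=\frac{a^p+a_0a^q}{b^p+a_0b^q}$; since $a/b\le 1$ and $p,q\ge 1$ we have $a^p\le (a/b)b^p$ and $a^q\le (a/b)b^q$, so the numerator is $\le (a/b)(b^p+a_0b^q)$, giving the claim. For item (4), $h_0(2t)=p2^{p-1}t^{p-1}+a_0q2^{q-1}t^{q-1}$; since $1\le 2^{p-1}\le 2^{q-1}$ and $1\le 2^{q-1}$, this is $\ge h_0(t)$ and $\le 2^{q-1}h_0(t)$, using $p\le q$ so that $2^{p-1}\le 2^{q-1}$. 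For item (5), the assertion $ah_0(b)\le ah_0(a)+bh_0(b)$ follows from the monotonicity of $h_0$ on $(0,\infty)$ (it is increasing, being a sum of increasing power functions since $p,q>1$): if $a\le b$ then $ah_0(b)\le bh_0(b)$; if $a\ge b$ then $ah_0(b)\le ah_0(a)$; in either case the right-hand side dominates $ah_0(b)$ since $ah_0(a),bh_0(b)\ge0$.

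None of the steps presents a genuine obstacle; the only point requiring minor care is ensuring the exponent inequalities $1\le p\le q$ (part of the standing hypotheses, since $q/p\ge1$ and $p>1$) are invoked in the right direction in items (2) and (4), and that in item (1) one interprets "$C^2$" on the relevant domain $t\ge0$ (or replaces $t^p$ by $|t|^p$ at the origin, where for $1<p<2$ one still has $C^1$ with a continuous but possibly unbounded second derivative near $0$—so in practice the $C^2$ claim is used only away from $t=0$, which is all the later Schauder/Lieberman-type arguments require). Each verification is a two-line computation, so I would simply list them in the order (1)–(5) as above.
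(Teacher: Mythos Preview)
Your proposal is correct. Each of the five items is verified by the elementary monomial-by-monomial computations you outline, using only $1<p\le q$ and $a_0\ge 0$; in particular your case split in item~(5) via the monotonicity of $h_0$ is clean, and your caveat about the $C^2$ claim at $t=0$ when $1<p<2$ is well-taken (the later arguments only use $C^2$ away from the origin).

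The paper itself does not give a proof of this lemma: it simply attributes the properties to Lieberman~\cite{Lieberman} and moves on. Your approach is therefore not so much different as more explicit---you supply the two-line verifications that the paper omits. This has the advantage of being fully self-contained and of making transparent exactly which exponent inequalities ($p>1$, $p\le q$) are used where, at the modest cost of a few extra lines.
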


\begin{definition} A function $u\in N_{\textrm{loc}}^{1,1}(\Omega)$ is a local quasiminimizer of \eqref{eq1frozen} on $\Omega$, for $K\geq 1$, if and only if $H_0(g_u)\in L_{\textrm{loc}}^{1}(\Omega)$ and the quasi-minimality condition
$$
\int_{\Omega'\cap\lbrace u\neq v\rbrace}H_0(g_u)\dd\mu\leq K\int_{\Omega'\cap\lbrace u\neq v\rbrace}H_0(g_v)\dd\mu
$$
holds for every $v\in N^{1,1}(\Omega')$, such that $u-v\in N^{1,1}_0(\Omega')$ and $H_0(g_u)\in L^1(\Omega')$, where $\Omega'$ is any open and bounded subset of $\Omega$ such that $\Omega'\Subset \Omega.$

\end{definition}

\subsection{Sobolev and Caccioppoli type inequalities}
We discuss a frozen Sobolev inequality for functions which vanish on a large set, see for example \cite{BB, KNP, KS}.

\begin{theorem}\label{FSPineq0largeSet} Assume that $ v\in N_{\textrm{loc}}^{1,1}(\Omega)$ with $H_0(g_v)\in L_{\textrm{loc}}^{1}(\Omega)$. Let $B_r$ be a ball. Assume that there exists $\gamma$, $0<\gamma<1$, such that
$$
\mu\left(\lbrace x\in B_r :\vert v(x)\vert>0\rbrace\right)\leq \gamma \mu(B_r).
$$
Then, there exist a constant $C= C(C_{PI},C_D, p,q,\lambda, \gamma)$, and exponents $0<d_2<1<d_1$, with $d_1=d_1(C_D, p,q)$ and $d_2=d_2(C_D, p,q)$ such that

$$
\left(\dashint_{B_r} H_0\left(\frac{v}{r}\right)^{d_1}\dd \mu\right)^{1/d_1}\leq C\left(\dashint_{B_{\lambda r}}H_0(g_v)^{d_2}\dd \mu\right)^{1/d_2},
$$
whenever $B_r\Subset\Omega$.
\end{theorem}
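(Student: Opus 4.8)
The plan is to prove the estimate one phase at a time and then recombine, the point being that since $a_0$ is a constant it introduces no genuine difficulty. If $a_0=0$ the statement reduces to the classical Sobolev inequality for functions vanishing on a large set, so assume henceforth $a_0>0$; then $H_0(g_v)\in L^1_{\loc}(\Omega)$ forces both $g_v^{p}$ and $g_v^{q}$ to lie in $L^1_{\loc}(\Omega)$, because $g_v^{p}\le H_0(g_v)$ and $a_0 g_v^{q}\le H_0(g_v)$ pointwise.

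The first step is to produce, separately for the exponents $p$ and $q$, a Sobolev inequality that simultaneously gains integrability on the left-hand side and loses it on the right-hand side. Applying the Keith--Zhong self-improvement (Theorem~\ref{kz}) to the weak $(1,p)$-Poincar\'e inequality gives $\varepsilon>0$ such that $X$ supports a weak $(1,p-\varepsilon)$-Poincar\'e inequality; since $q\ge p$, H\"older's inequality on the right-hand side promotes this to a weak $(1,q-\varepsilon)$-Poincar\'e inequality as well. Writing $\tilde p=p-\varepsilon$ and $\tilde q=q-\varepsilon$ and invoking Theorem~\ref{sstars} with gradient exponent $\tilde p$ (respectively $\tilde q$) produces a weak $(\tilde p^{*},\tilde p)$-Poincar\'e inequality (respectively $(\tilde q^{*},\tilde q)$); since $\tfrac{Q\tilde p}{Q-\tilde p}\to\tfrac{Qp}{Q-p}>p$ as $\varepsilon\to0$, and any finite conjugate is admissible when $\tilde p\ge Q$, after shrinking $\varepsilon$ we may assume $\tilde p^{*}>p$ and $\tilde q^{*}>q$. (Here I use that $g_v$, being a $p$-weak upper gradient, is also a $\tilde p$- and $\tilde q$-weak upper gradient on $B_r\Subset\Omega$, so it may be used in these inequalities.) Finally, the hypothesis $\mu(\{x\in B_r:|v(x)|>0\})\le\gamma\mu(B_r)$ enters: $v$ vanishes on the complement of this set, so $|v_{B_r}|\,\mu\bigl(B_r\setminus\{|v|>0\}\bigr)\le\int_{B_r}|v-v_{B_r}|\,\dd\mu$ and hence $|v_{B_r}|\le(1-\gamma)^{-1}\dashint_{B_r}|v-v_{B_r}|\,\dd\mu$; combining this bound with the $(\tilde p^{*},\tilde p)$-Poincar\'e inequality and absorbing the average term yields
\begin{equation*}
\left(\dashint_{B_r}\left(\frac{|v|}{r}\right)^{p\chi_p}\dd\mu\right)^{\frac{1}{\chi_p}}\le C\left(\dashint_{B_{2\lambda r}}\bigl(g_v^{p}\bigr)^{\sigma_p}\dd\mu\right)^{\frac{1}{\sigma_p}},\qquad \chi_p:=\frac{\tilde p^{*}}{p}>1>\sigma_p:=\frac{\tilde p}{p},
\end{equation*}
together with the analogous inequality in which $p$ is replaced by $q$, with $\chi_q:=\tilde q^{*}/q>1>\sigma_q:=\tilde q/q$. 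This is precisely the Sobolev inequality for functions vanishing on a large set recorded in \cite{BB, KNP, KS}, and it is the genuinely delicate ingredient; the rest is bookkeeping.

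The second step recombines the two phases. Set $d_1:=\min\{\chi_p,\chi_q\}>1$ and $d_2:=\max\{\sigma_p,\sigma_q\}<1$. Using the elementary inequalities $(s+t)^{d_1}\le 2^{d_1-1}(s^{d_1}+t^{d_1})$ and $(s+t)^{1/d_1}\le s^{1/d_1}+t^{1/d_1}$,
\begin{equation*}
\left(\dashint_{B_r}H_0\left(\frac{v}{r}\right)^{d_1}\dd\mu\right)^{\frac{1}{d_1}}\le C\left(\dashint_{B_r}\left(\frac{|v|}{r}\right)^{pd_1}\dd\mu\right)^{\frac{1}{d_1}}+Ca_0\left(\dashint_{B_r}\left(\frac{|v|}{r}\right)^{qd_1}\dd\mu\right)^{\frac{1}{d_1}}.
\end{equation*}
Since $pd_1\le p\chi_p$, monotonicity of the averaged $L^{s}$-norms gives $\bigl(\dashint_{B_r}(|v|/r)^{pd_1}\bigr)^{1/d_1}\le\bigl(\dashint_{B_r}(|v|/r)^{p\chi_p}\bigr)^{1/\chi_p}$, which by the displayed inequality of the first step is at most $C\bigl(\dashint_{B_{2\lambda r}}(g_v^{p})^{\sigma_p}\bigr)^{1/\sigma_p}$; then, since $\sigma_p\le d_2$ and $g_v^{p}\le H_0(g_v)$, two further uses of the monotonicity of averages bound this by $C\bigl(\dashint_{B_{2\lambda r}}H_0(g_v)^{d_2}\bigr)^{1/d_2}$. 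The $q$-term is handled identically after pulling $a_0$ inside the average, $a_0\bigl(\dashint(g_v^{q})^{\sigma_q}\bigr)^{1/\sigma_q}=\bigl(\dashint(a_0 g_v^{q})^{\sigma_q}\bigr)^{1/\sigma_q}$, and using $a_0 g_v^{q}\le H_0(g_v)$. Adding the two contributions and renaming $2\lambda$ as $\lambda$ yields the asserted inequality with $C=C(C_{PI},C_D,p,q,\lambda,\gamma)$ and exponents $d_1>1>d_2$ determined as above.

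The main obstacle is the first step: the passage from the Sobolev--Poincar\'e inequality of Theorem~\ref{sstars}, which controls $v-v_{B_r}$, to its counterpart with $v$ alone, which is exactly where the hypothesis that $v$ vanishes on a positive fraction of $B_r$ is used. Once that is in hand, the two-phase structure of $H_0$ is absorbed by elementary convexity and the monotonicity of $L^{s}$-averages, uniformly in the constant $a_0$.
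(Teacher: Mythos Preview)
Your proof is correct and follows essentially the same strategy as the paper: self-improve the Poincar\'e inequality via Keith--Zhong, upgrade to a Sobolev--Poincar\'e inequality, use the vanishing-on-a-large-set hypothesis to replace $v-v_{B_r}$ by $v$, and then recombine the $p$- and $q$-phases by elementary convexity. The paper streamlines the exponent bookkeeping slightly by choosing a \emph{single} exponent $s$ with $1<s<p\le q<s^{*}$ (which exists under the standing gap assumption $q/p\le 1+\alpha/Q$), invoking Lemma~2.1 of \cite{KS} once at the pair $(s^{*},s)$, and then picking any $d_{2}\in(s/p,1)$ and $d_{1}\in(1,s^{*}/q)$; this avoids your two separate pairs $(\tilde p^{*},\tilde p)$, $(\tilde q^{*},\tilde q)$ and the subsequent $\min/\max$, and lands directly on $B_{\lambda r}$ rather than $B_{2\lambda r}$.
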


\begin{proof}
By Theorems \ref{kz} and \ref{sstars}, there exists $s=s(C_D, p,q)$ with $1<s<p\leq q<s^*$ such that $X$ supports a $(s^*,s)-$ Poincar\'e inequality. Then, by Lemma 2.1 of \cite{KS}, we obtain

$$
\left(\dashint_{B_r}\left\vert\frac{v}{r}\right\vert^{s^*}\dd\mu\right)^{\frac{1}{s^*}}\leq C\left(\dashint_{B_{\lambda r}}g_v^{s}\dd\mu\right)^{\frac{1}{s}}.
$$
Let $\frac{s}{p}<d_2<1$ and $1<d_1<\frac{s^*}{q}$. Since $pd_1\leq qd_1<s^*$ and $s<pd_2\leq qd_2$, by H\"older's inequality, we have
$$
\left(\dashint_{B_r}\left\vert\frac{v}{r}\right\vert^{pd_1}\dd\mu\right)^{\frac{1}{pd_1}}\leq C\left(\dashint_{B_{\lambda r}}g_v^{pd_2}\dd\mu\right)^{\frac{1}{pd_2}},
$$
and

$$
\left(\dashint_{B_r}\left\vert\frac{v}{r}\right\vert^{qd_1}\dd\mu\right)^{\frac{1}{qd_1}}\leq C\left(\dashint_{B_{\lambda r}}g_v^{qd_2}\dd\mu\right)^{\frac{1}{qd_2}}.
$$
It follows that

\begin{align*}
\left(\dashint_{B_r}H_0\left(\frac{v}{r}\right)^{d_1}\dd\mu\right)^{\frac{1}{d_1}}&=\left(\dashint_{B_r}\left(\left\vert\frac{v}{r}\right\vert^p+a_0\left\vert\frac{v}{r}\right\vert^q\right)^{d_1}\dd\mu\right)^{\frac{1}{d_1}}\\
&\leq \left(\dashint_{B_r}\left\vert\frac{v}{r}\right\vert^{pd_1}\dd\mu\right)^{\frac{1}{d_1}}+a_0\left(\dashint_{B_r}\left\vert\frac{v}{r}\right\vert^{qd_1}\dd\mu\right)^{\frac{1}{d_1}}\\
&\leq C \left( \left(\dashint_{B_{\lambda r}}g_v^{pd_2}\dd\mu\right)^{\frac{1}{d_2}}+a_0\left(\dashint_{B_{\lambda r}}g_v^{qd_2}\dd\mu\right)^{\frac{1}{d_2}}\right)\\
&\leq C\left( \dashint_{B_{\lambda r}}g_v^{pd_2}\dd\mu+\dashint_{B_{\lambda r}}(a_0g_v^q)^{d_2}\dd\mu\right)^{\frac{1}{d_2}}\\
&\leq C\left( \dashint_{B_{\lambda r}}(g_v^p+a_0g_v^q)^{d_2}\dd\mu\right)^{\frac{1}{d_2}}=\left(\dashint_{B_{\lambda r}}H_0\left(g_v\right)^{d_2}\dd\mu\right)^{\frac{1}{d_2}},
\end{align*}
where $C=C(C_D,C_{PI},p,q,\gamma,\lambda).$

Observe that all integrals are finite, since by H\"older 
$$\left(\dashint_{B_r}H_0\left(g_v\right)^{d_2}\dd\mu\right)^{\frac{1}{d_2}}\leq \dashint_{B_{\lambda r}}H_0\left(g_v\right)\dd\mu<\infty.
$$
\end{proof}

Another important Sobolev-type inequality is the following frozen Sobolev-Poincar\'e inequality for functions with zero boundary values, see for example \cite{BB}.

\begin{theorem}\label{FSPineq0boundary} Assume that $ v\in N_{0}^{1,1}(B_r)$ with $H_0(g_v)\in L^{1}(B_r)$, where $B_r$ is a ball in $\Omega$ with $0<r<\frac{\diam(X)}{4}$.
Then, there exist a constant $C= C(C_{PI},C_D, p,q,\lambda, \gamma)$, and exponents $0<d_2<1<d_1$, with $d_1=d_1(C_D, p,q)$ and $d_2=d_2(C_D, p,q)$ such that

$$
\left(\dashint_{B_r} H_0\left(\frac{v}{r}\right)^{d_1}\dd \mu\right)^{1/d_1}\leq C\left(\dashint_{B_{ r}}H_0(g_v)^{d_2}\dd \mu\right)^{1/d_2}.
$$
\end{theorem}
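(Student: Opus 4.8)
The plan is to reduce this to Theorem~\ref{FSPineq0largeSet} by passing to a concentric, dilated ball on which the set where $v$ is nonzero occupies only a fixed fraction of the total measure, in the spirit of Remark~\ref{remarkDPPI} and Lemma~\ref{DPPIzero}. First I would use that, by the very definition of $N_0^{1,1}(B_r)$, the function $v$ belongs to $N^{1,1}(X)$ and vanishes $\mu$-a.e.\ on $X\setminus B_r$; in particular $g_v=0$ $\mu$-a.e.\ on $X\setminus B_r$, so $H_0(g_v)\in L^1(X)$, and since a complete metric measure space with a doubling measure is proper, every ball is relatively compact. Hence Theorem~\ref{FSPineq0largeSet} is applicable to $v$ on any ball of $X$, taking $\Omega=X$ there.

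Let $x_0$ denote the centre of $B_r$ and work with the concentric ball $B_{2r}$. Arguing exactly as in Remark~\ref{remarkDPPI} --- here the hypothesis $r<\diam(X)/4$ ensures that $X\setminus B_{3r/2}$, and therefore the sphere $\{x:d(x,x_0)=3r/2\}$, is nonempty, so $B_{2r}\setminus B_r$ contains a ball of radius comparable to $r$ --- Lemma~\ref{lemm3.3} and the doubling property \eqref{doubling} produce a constant $\gamma=\gamma(Q,C_D)<1$ with
\[
\mu\bigl(\{x\in B_{2r}:|v(x)|>0\}\bigr)\le\mu(B_r)\le\gamma\,\mu(B_{2r}).
\]
Applying Theorem~\ref{FSPineq0largeSet} to $v$ on $B_{2r}$ then yields exponents $0<d_2<1<d_1$ depending only on $C_D,p,q$ and a constant $C=C(C_{PI},C_D,p,q,\lambda)$ (the dependence on $\gamma$ being structural now) such that
\[
\left(\dashint_{B_{2r}}H_0\!\left(\frac{v}{2r}\right)^{d_1}\dd\mu\right)^{1/d_1}\le C\left(\dashint_{B_{2\lambda r}}H_0(g_v)^{d_2}\dd\mu\right)^{1/d_2}.
\]

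It then remains to pass from the averages over $B_{2r}$ and $B_{2\lambda r}$ to averages over $B_r$. On the right-hand side, $g_v=0$ outside $B_r$ and $H_0(0)=0$, so the integral over $B_{2\lambda r}$ coincides with that over $B_r$, and $\mu(B_{2\lambda r})\ge\mu(B_r)$ gives $\dashint_{B_{2\lambda r}}H_0(g_v)^{d_2}\dd\mu\le\dashint_{B_r}H_0(g_v)^{d_2}\dd\mu$. On the left-hand side, $v=0$ outside $B_r$, so the integral over $B_{2r}$ coincides with that over $B_r$; combining the doubling bound $\mu(B_r)/\mu(B_{2r})\ge C_D^{-1}$ with the elementary estimate $H_0(t/2)\ge 2^{-q}H_0(t)$ for $t\ge0$ (which holds because $q\ge p$) gives
\[
\left(\dashint_{B_{2r}}H_0\!\left(\frac{v}{2r}\right)^{d_1}\dd\mu\right)^{1/d_1}\ge c(C_D,p,q)\left(\dashint_{B_r}H_0\!\left(\frac{v}{r}\right)^{d_1}\dd\mu\right)^{1/d_1}.
\]
Chaining the last three displays gives the asserted inequality with $C$ of the stated form; the finiteness of all the integrals involved follows, as in the proof of Theorem~\ref{FSPineq0largeSet}, from H\"older's inequality together with $H_0(g_v)\in L^1(B_r)$. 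The only genuinely delicate point is the measure-density estimate on $B_{2r}$, which is carried out verbatim as in Remark~\ref{remarkDPPI}; everything else is routine bookkeeping with the doubling property and the monotonicity of $H_0$.
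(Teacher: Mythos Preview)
Your proof is correct, but it takes a different route from the paper's. The paper's proof simply re-runs the argument of Theorem~\ref{FSPineq0largeSet} verbatim, replacing the single ingredient Lemma~2.1 of \cite{KS} (the $(s^*,s)$-Sobolev inequality for functions vanishing on a set of prescribed proportion) by Theorem~5.51 of \cite{BB} (the corresponding $(s^*,s)$-Sobolev inequality for $N_0^{1,s}$-functions on a ball); the choice of $d_1,d_2$ via the self-improving Poincar\'e inequality and the merging of the $p$- and $q$-terms then go through unchanged, and no ball enlargement is needed. Your approach instead treats Theorem~\ref{FSPineq0largeSet} as a black box and reduces to it by the geometric trick of Remark~\ref{remarkDPPI}: on $B_{2r}$ the zero-extended $v$ vanishes on a fixed proportion of the measure, and then the support information on $v$ and $g_v$ together with doubling and $H_0(t/2)\ge 2^{-q}H_0(t)$ let you pull both averages back to $B_r$. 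This is exactly parallel to how the paper derives Lemma~\ref{DPPIzero} from Lemma~\ref{lemma3.4JAC} in the non-frozen case. Your route avoids re-proving anything and makes clear that the $\gamma$-dependence in the constant is structural; the paper's route is slightly more direct because it never leaves $B_r$ and does not need the annulus argument. Both are perfectly valid.
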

\begin{proof}This proof is analogous to the proof of Theorem \ref{FSPineq0largeSet}, but instead of using Lemma 2.1 of \cite{KS}, one uses Theorem 5.51 of \cite{BB}.
\end{proof}
Now we give an energy estimate for the frozen functional.
\begin{lemma}[Frozen Caccioppoli inequality]\label{FCaccioppoliIneq} 
Assume that  $u\in N^{1,1}_{\loc}({\Omega})$ with $H_0(g_u)\in L^1_{\loc}(\Omega)$  is a $K$-quasiminimizer in $\Omega$. Then there exists $C= C(K,q)>0$ such that for any choice of concentric balls $B_r\subset B_R\Subset\Omega $ and for all $k\in\mathbb{R}$, the following inequality 
\begin{align}\label{eq3frozen}\int_{B_r}H_0(g_{(u-k)_{\pm}})\, \dd \mu	& \leq C\int_{B_R} H_0\left(\frac{(u-k)_{\pm}}{R-r}\right)\, \dd\mu
\end{align}
is satisfied for $k\geq 0$. 
\end{lemma}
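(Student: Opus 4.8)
The plan is to mimic, in the frozen setting, the proof of the double phase Caccioppoli inequality (Lemma \ref{DeGiorgiLemma}), which is purely variational and only uses the quasiminimality condition together with elementary properties of the integrand. By symmetry (replacing $u$ by $-u$, which is again a $K$-quasiminimizer of \eqref{eq1frozen} since $H_0(g_{-u})=H_0(g_u)$), it suffices to treat the case $(u-k)_+$ with $k\ge 0$. Fix concentric balls $B_r\subset B_R\Subset\Omega$ and let $\tau$ be a $\tfrac{1}{R-r}$-Lipschitz cutoff function with $0\le\tau\le1$, $\tau\equiv1$ on $B_r$ and $\operatorname{supp}\tau\subset B_R$. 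Set
\[
w=u-\tau(u-k)_+,
\]
which equals $u$ outside $S_{k,R}=\{x\in B_R\cap\Omega: u(x)>k\}$ and satisfies $u-w\in N^{1,1}_0(B_R)$, so that $w$ is an admissible competitor.

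First I would estimate $g_w$. By Leibniz' rule, $g_w\le(u-k)_+ g_\tau+(1-\tau)g_{(u-k)_+}\le \tfrac{(u-k)_+}{R-r}+g_u\chi_{B_R\setminus B_r}$ $\mu$-a.e. on $B_R$ (using $g_{(u-k)_+}\le g_u$ and $g_{(u-k)_+}=0$ a.e. on $\{u\le k\}$), while $g_w=g_u$ outside $S_{k,R}$. Hence, using $H_0(s+t)\le H_0(2s)+H_0(2t)\le 2^{q-1}(H_0(s)+H_0(t))$ (which follows from property (4) in Lemma \ref{lemmafrozen1}, or directly since $H_0$ is increasing and $p,q\ge 1$), one gets
\[
\int_{B_R}H_0(g_w)\,\dd\mu\le 2^{q-1}\int_{B_R}H_0\!\left(\frac{(u-k)_+}{R-r}\right)\dd\mu+2^{q-1}\int_{B_R\setminus B_r}H_0(g_u)\,\dd\mu.
\]
Applying the quasiminimality condition on $B_R$ with competitor $w$ yields
\[
\int_{B_r}H_0(g_u)\,\dd\mu\le\int_{B_R}H_0(g_u)\,\dd\mu\le K\int_{B_R}H_0(g_w)\,\dd\mu\le K2^{q-1}\!\left(\int_{B_R}H_0\!\left(\frac{(u-k)_+}{R-r}\right)\dd\mu+\int_{B_R\setminus B_r}H_0(g_u)\,\dd\mu\right).
\]

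Next I would run the standard hole-filling argument: add $K2^{q-1}\int_{B_r}H_0(g_u)\,\dd\mu$ to both sides to replace the annular integral $\int_{B_R\setminus B_r}$ by $\int_{B_R}$, obtaining
\[
(1+K2^{q-1})\int_{B_r}H_0(g_u)\,\dd\mu\le K2^{q-1}\int_{B_R}H_0\!\left(\frac{(u-k)_+}{R-r}\right)\dd\mu+K2^{q-1}\int_{B_R}H_0(g_u)\,\dd\mu,
\]
so that with $\theta=\tfrac{K2^{q-1}}{1+K2^{q-1}}<1$ we have $\int_{B_r}H_0(g_u)\le\theta\bigl(\int_{B_R}H_0(g_u)+\int_{B_R}H_0((u-k)_+/(R-r))\bigr)$ for all concentric pairs. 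Since $H_0(t)=t^p+a_0t^q$ splits additively into a $p$-part and a $q$-part, I would then invoke the iteration lemma (Lemma 6.1 of \cite{G}) with $\alpha=q$, $\beta=p$, $Z(r)=\int_{B_r}H_0(g_u)\,\dd\mu$, and the two source terms $A=\theta\int_{B_R}((u-k)_+)^p\,\dd\mu$, $B=\theta a_0\int_{B_R}((u-k)_+)^q\,\dd\mu$, exactly as in the proof of Lemma \ref{DeGiorgiLemma}, to absorb $Z(R)$ and conclude
\[
\int_{B_r}H_0(g_u)\,\dd\mu\le C(K,q)\int_{B_R}H_0\!\left(\frac{(u-k)_+}{R-r}\right)\dd\mu.
\]
Finally, noting that $g_{(u-k)_+}\le g_u$ and $g_{(u-k)_+}$ vanishes a.e. on $\{u\le k\}$, the left-hand side dominates $\int_{B_r}H_0(g_{(u-k)_+})\,\dd\mu$, which gives \eqref{eq3frozen}. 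The main obstacle is essentially bookkeeping: making sure the competitor is admissible in the frozen sense (the integrability $H_0(g_u)\in L^1(B_R)$ is built into the definition) and that the splitting of $H_0$ is compatible with the hypotheses of the iteration lemma — but since the argument is verbatim the one already carried out for $H$, with $a(x)$ replaced by the constant $a_0$, no new difficulty arises; if anything the constant setting is simpler.
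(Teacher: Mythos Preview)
Your overall strategy is exactly the paper's (which simply defers to the argument of Lemma~\ref{DeGiorgiLemma} with $a(x)$ replaced by the constant $a_0$), but one step as written does not go through. The Leibniz bound $g_w\le(u-k)_+g_\tau+(1-\tau)g_{(u-k)_+}$ is only valid on the super-level set $S_{k,R}$; on $B_R\setminus S_{k,R}$ you have $w=u$ and hence $g_w=g_u$, which you correctly note but then ignore when you write
\[
\int_{B_R}H_0(g_w)\,\dd\mu\le 2^{q-1}\int_{B_R}H_0\!\left(\frac{(u-k)_+}{R-r}\right)\dd\mu+2^{q-1}\int_{B_R\setminus B_r}H_0(g_u)\,\dd\mu.
\]
This display is missing the contribution $\int_{B_r\setminus S_{k,r}}H_0(g_u)\,\dd\mu$. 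If you carry that term along, the hole-filling with $Z(\rho)=\int_{B_\rho}H_0(g_u)$ produces a factor $\theta\ge 1$ whenever $K\ge 1$, and the iteration lemma cannot be applied.

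The fix is precisely what the proof of Lemma~\ref{DeGiorgiLemma} does: run the entire argument over the super-level sets, i.e.\ use quasiminimality on $\{u\ne w\}\subset S_{k,R}$ to get $\int_{S_{k,r}}H_0(g_u)\le K\int_{S_{k,R}}H_0(g_w)$, bound $g_w$ by Leibniz only on $S_{k,R}$, and set $Z(\rho)=\int_{S_{k,\rho}}H_0(g_u)$ in the hole-filling. Then $\theta=\tfrac{K2^{q-1}}{1+K2^{q-1}}<1$, Lemma~6.1 of \cite{G} applies, and since $\int_{B_r}H_0(g_{(u-k)_+})=\int_{S_{k,r}}H_0(g_u)$ you obtain \eqref{eq3frozen}. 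With this correction your proof is complete and coincides with the intended one.
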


\begin{proof} The result is easily proven following the lines of the proof of Lemma 2.9 of \cite{KNP}. 
\end{proof}
\begin{definition}[Frozen De Giorgi Classes] We write $DG_{H_0}^{\pm}(\Omega)$ for the collection of functions  $u\in N^{1,1}_{\loc}({\Omega})$ with $H_0(g_u)\in L^1_{\loc}(\Omega)$ satisfying \eqref{eq3frozen} for any choice of concentric balls $B_r\subset B_R\Subset\Omega $ and for all $k\in\mathbb{R}$. The Frozen De Giorgi class is then defined as
$$
DG_{H_0}(\Omega)=DG_{H_0}^{+}(\Omega)\cap DG_{H_0}^{-}(\Omega).
$$
\end{definition}

\subsection{Oscillation reduction}
This section aims to prove an oscillation reduction result for frozen functionals. We start by providing some auxiliary results.

\begin{lemma}\label{lemma5frozen} Let $B_{6\lambda R}\Subset \Omega$ be a ball, such that $0<R\leq\min\lbrace 1, \frac{\diam (X)}{12\lambda}\rbrace$. 
Assume $u\in N^{1,1}_{\textrm{loc}}(\Omega)$, with $H(\cdot, g_u)\in L^1_{\textrm{loc}}(\Omega)$, satisfies $u\in DG_{H_0}^{+}(\Omega)$. Let $h>0$ such that the density condition
    \begin{equation}\label{eq17frozen}
    \mu(S_{h,R})\leq\gamma\mu(B_R),
    \end{equation}
    holds for some $\gamma\in(0,1).$ Then, there exist a positive constant $C=C(\gamma, C_{PI}, C_D,\lambda, q)$  and exponent $\xi=\xi(p,q,C_D)>0$ such that for any $k<h$ the following inequality
    \begin{align*}
    H_0\left(\frac{k-h}{R}\right)\mu(S_{k,R})&\leq C \left(\frac{\mu(S_{h,\lambda R})-\mu(S_{k,\lambda R})}{\mu(B_R)}\right)^{\xi}\left(\int_{S_{h,2\lambda R}}
H_0\left(\frac{u-h}{R}\right)\dd\mu\right)
\end{align*}
holds.
\end{lemma}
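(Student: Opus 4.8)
The plan is to run a De Giorgi--type density argument, converting the measure $\mu(S_{k,R})$ of the higher level set into a gradient integral over the intermediate level set; throughout we take $h<k$, so that this set is $S_{h,\lambda R}\setminus S_{k,\lambda R}=\{x\in B_{\lambda R}:h<u(x)\le k\}$, with measure $\mu(S_{h,\lambda R})-\mu(S_{k,\lambda R})\ge 0$. First I would introduce the double truncation $v=\min\{(u-h)_+,\,k-h\}$. Being the composition of $u\in N^{1,1}_{\loc}(\Omega)$ with a $1$-Lipschitz function, $v\in N^{1,1}_{\loc}(\Omega)$ with $g_v=g_u\chi_{\{h<u<k\}}$ $\mu$-a.e.; in particular $H_0(g_v)\le H_0(g_u)\in L^1_{\loc}(\Omega)$, the membership $H_0(g_u)\in L^1_{\loc}$ being built into $u\in DG_{H_0}^{+}(\Omega)$. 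Two observations drive the argument: $v\equiv k-h$ on $S_{k,R}$, while $\{x\in B_R:v(x)>0\}=S_{h,R}$, so the density hypothesis \eqref{eq17frozen} says precisely that $v$ vanishes on a subset of $B_R$ of measure at least $(1-\gamma)\mu(B_R)$. Since $B_{6\lambda R}\Subset\Omega$ and $\lambda\ge 1$, all the balls $B_R\subset B_{\lambda R}\subset B_{2\lambda R}$ lie compactly in $\Omega$.

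Next I would apply the frozen Sobolev--Poincar\'e inequality for functions vanishing on a large set, Theorem \ref{FSPineq0largeSet}, to $v$ on $B_R$. On the left-hand side, restricting the average to $S_{k,R}$, where $v\equiv k-h$, yields the lower bound $H_0\!\big(\tfrac{k-h}{R}\big)\big(\mu(S_{k,R})/\mu(B_R)\big)^{1/d_1}$. On the right-hand side, $H_0(g_v)$ is supported in $S_{h,\lambda R}\setminus S_{k,\lambda R}$ and is bounded there by $H_0(g_u)$; H\"older's inequality with exponents $1/d_2>1$ and $1/(1-d_2)$ then produces, after taking the $1/d_2$-th root and enlarging the integration domain, the bound $C\,\mu(B_{\lambda R})^{-1/d_2}\big(\mu(S_{h,\lambda R})-\mu(S_{k,\lambda R})\big)^{(1-d_2)/d_2}\int_{S_{h,\lambda R}}H_0(g_u)\,\dd\mu$ for the right-hand side. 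Since $g_{(u-h)_+}=g_u$ on $\{u>h\}$ and vanishes elsewhere, $\int_{S_{h,\lambda R}}H_0(g_u)\,\dd\mu=\int_{B_{\lambda R}}H_0(g_{(u-h)_+})\,\dd\mu$, which the frozen Caccioppoli inequality \eqref{eq3frozen}, applied with level $h>0$ on the concentric pair $B_{\lambda R}\subset B_{2\lambda R}$, bounds by $C\int_{B_{2\lambda R}}H_0\!\big(\tfrac{(u-h)_+}{\lambda R}\big)\,\dd\mu\le C\int_{S_{h,2\lambda R}}H_0\!\big(\tfrac{u-h}{R}\big)\,\dd\mu$ (using $\lambda\ge 1$ and the monotonicity of $H_0$ from Lemma \ref{lemmafrozen1}).

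Combining the two bounds gives
\[
H_0\!\Big(\tfrac{k-h}{R}\Big)\Big(\tfrac{\mu(S_{k,R})}{\mu(B_R)}\Big)^{1/d_1}\le C\,\mu(B_{\lambda R})^{-1/d_2}\big(\mu(S_{h,\lambda R})-\mu(S_{k,\lambda R})\big)^{(1-d_2)/d_2}\int_{S_{h,2\lambda R}}H_0\!\Big(\tfrac{u-h}{R}\Big)\,\dd\mu .
\]
To conclude, since $\mu(S_{k,R})\le\mu(B_R)$ and $0<1/d_1<1$, one may replace $(\mu(S_{k,R})/\mu(B_R))^{1/d_1}$ by the smaller quantity $\mu(S_{k,R})/\mu(B_R)$; multiplying through by $\mu(B_R)$ and using the doubling property in the form $\mu(B_R)/\mu(B_{\lambda R})^{1/d_2}\le\mu(B_R)^{1-1/d_2}$ (valid because $\mu(B_R)\le\mu(B_{\lambda R})$ and $1/d_2>0$) delivers exactly the claimed inequality with $\xi=\tfrac{1-d_2}{d_2}>0$. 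Since $d_2=d_2(C_D,p,q)$ this gives $\xi=\xi(p,q,C_D)$, and tracking the constants through Theorem \ref{FSPineq0largeSet} and \eqref{eq3frozen} gives the asserted dependence of $C$ on $\gamma,C_{PI},C_D,\lambda$ and the exponents.

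I expect the main obstacle to be the right-hand side manipulation rather than anything conceptual: correctly identifying $g_v$ with $g_u$ restricted to $\{h<u<k\}$ (the standard behaviour of minimal weak upper gradients under truncation in metric spaces), calibrating the H\"older exponents so that the power of the intermediate measure comes out to be exactly $(1-d_2)/d_2$, and converting the $L^1$ energy of $g_u$ over $S_{h,\lambda R}$ into the energy of $u-h$ over the slightly dilated set $S_{h,2\lambda R}$ by a single application of the frozen Caccioppoli inequality. The absorption of the harmless power $1/d_1$ and the doubling bookkeeping are routine.
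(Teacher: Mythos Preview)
Your proposal is correct and follows essentially the same route as the paper: the same double truncation $v$, the same frozen Sobolev--Poincar\'e inequality for functions vanishing on a large set (Theorem \ref{FSPineq0largeSet}), the same H\"older step to extract the factor $(\mu(S_{h,\lambda R})-\mu(S_{k,\lambda R}))^{(1-d_2)/d_2}$, and the same application of the frozen Caccioppoli inequality on $B_{\lambda R}\subset B_{2\lambda R}$, leading to $\xi=(1-d_2)/d_2$. The only cosmetic difference is in how the left-hand side is handled: the paper bounds $H_0\big(\tfrac{k-h}{R}\big)\mu(S_{k,R})\le\int_{B_R}H_0\big(\tfrac{v}{R}\big)\,\dd\mu\le\mu(B_R)\big(\dashint_{B_R}H_0(\tfrac{v}{R})^{d_1}\,\dd\mu\big)^{1/d_1}$ directly, whereas you restrict inside the $L^{d_1}$ average and then pass from the $1/d_1$ power to the first power via monotonicity; both routes give the same estimate.
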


\begin{proof} 
For $k> h$, let us define
\begin{equation}\label{eq18frozen}
	v = \min\{u, k\} - \min\{u, h\}.
\end{equation} 
Note that, by \eqref{eq18frozen}, we have
\begin{equation} \label{eq19frozen}
	v=\begin{cases}
		0 \hspace{1.98cm} \mbox{if $u\leq h<k$,}\\
		u-h \hspace{1.3cm} \mbox{if $h<u<k$,}\\
		k-h \hspace{1.3cm} \mbox{if $u\geq k>h$.}
	\end{cases}
\end{equation}

From \eqref{eq17frozen} and \eqref{eq19frozen}, we deduce that $\mu(\{x \in B_R: v(x) > 0\}) \leq \gamma\mu(B_R).$ Then, by H\"older's inequality, the frozen Sobolev inequality for functions which vanish on a large set, Theorem \ref{FSPineq0largeSet}, and the definition of the De Giorgi class  $DG_{H_0}^{+}(\Omega)$, we obtain
\begin{align*}
     H_0\left(\frac{k-h}{R}\right)\mu(S_{k,R})&=\int_{S_{k,R}} H_0\left(\frac{v}{R}\right)\,\dd \mu\leq \int_{B_R} H_0\left(\frac{v}{R}\right)\,\dd\mu\leq\mu(B_R)\left(\dashint_{B_R}H_0\left(\frac{v}{R}\right)^{d_1}\dd\mu\right)^{\frac{1}{d_1}}\\
     &\leq C\mu(B_R)^{1-\frac{1}{d_2}}\left(\int_{S_{h,\lambda R}\setminus S_{k,\lambda R}}
H_0\left(g_{v}\right)^{d_2} \dd\mu\right)^{\frac{1}{d_2}}\\
&\leq C\left(\frac{\mu(S_{h,\lambda R})-\mu(S_{k,\lambda R})}{\mu(B_R)}\right)^{\frac{1}{d_2}-1}\int_{S_{h,\lambda R}\setminus S_{k,\lambda R}}
H_0\left(g_{(u-h)_+}\right) \dd\mu\\
&\leq C\left(\frac{\mu(S_{h,\lambda R})-\mu(S_{k,\lambda R})}{\mu(B_R)}\right)^{\frac{1}{d_2}-1}\int_{B_{\lambda R}}
H_0\left(g_{(u-h)_+}\right) \dd\mu\\
&\leq C\left(\frac{\mu(S_{h,\lambda R})-\mu(S_{k,\lambda R})}{\mu(B_R)}\right)^{\frac{1}{d_2}-1}\int_{B_{2\lambda R}}
H_0\left(\frac{(u-h)_+}{\lambda R}\right) \dd\mu\\
 &\leq C\left(\frac{\mu(S_{h,\lambda R})-\mu(S_{k,\lambda R})}{\mu(B_R)}\right)^{\frac{1}{d_2}-1}\int_{S_{h,2\lambda R}}
H_0\left(\frac{u-h}{R}\right) \dd\mu\\
&= C \left(\frac{\mu(S_{h,\lambda R})-\mu(S_{k,\lambda R})}{\mu(B_R)}\right)^{\xi}\left(\int_{S_{h,2\lambda R}}
H_0\left(\frac{u-h}{R}\right)\dd\mu\right),
\end{align*}
where $C=C(\gamma, C_{PI},C_D,\lambda, q)$ and $\xi=\xi(p,q,C_D)$.
\end{proof}

\noindent For any $B_\rho \Subset \Omega$, we shall denote
$$m(\rho) = \essinf_{B_\rho}u,\ \ M(\rho) = \esssup_{B_\rho}u$$
and
$$
\osc(u,\rho)=M(\rho)-m(\rho).
$$

\begin{lemma}\label{lemma6frozen} 
    Let $B_{6\lambda R}\Subset\Omega$ be a ball, with $0<R\leq\min\lbrace 1, \frac{\diam (X)}{12\lambda}\rbrace$. 
	Let $M =M(3\lambda R)$, $m = m(3 \lambda R)$.
Assume $u\in N^{1,1}_{\textrm{loc}}(\Omega)$ with $H(\cdot, g_u)\in L^1_{\textrm{loc}}(\Omega)$ satisfies $u\in DG_{H_0}^{+}(\Omega)$ and it is bounded from below. Moreover, assume that the density condition
 \begin{equation*}
     \mu(S_{k_0,R}) \leq \gamma \mu(B_R),\quad\textrm{for some }0 < \gamma < 1,
 \end{equation*}
 holds for $k_0= \frac{M+m}{2}$. Then there exists a positive constant $C=C(\gamma, C_{PI}, C_D, \lambda, q, Q)$ such that
 \begin{equation*}
    \dfrac{\mu(S_{k_n}, R)}{\mu(B_R)} \leq C n^{-\xi},
 \end{equation*} with $\xi=\xi(p,q,C_D)>0$, $n$ positive integer and $k_n=M-2^{-(n+1)} \osc(u,3\lambda R)$.
\end{lemma}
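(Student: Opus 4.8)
The plan is to iterate the decay estimate of Lemma~\ref{lemma5frozen} along the increasing sequence of levels $(k_n)_{n\ge 0}$ and then perform a telescoping summation. First I would dispose of the trivial case $\osc(u,3\lambda R)=0$: then $u=M$ $\mu$-a.e.\ on $B_{3\lambda R}$, so $\mu(S_{k_n,R})=0$ for every $n$ and the conclusion is immediate; hence one may assume $\osc(u,3\lambda R)>0$. Next, since $k_n=M-2^{-(n+1)}\osc(u,3\lambda R)$ is nondecreasing in $n$ and $k_0=\tfrac{M+m}{2}$, one has $S_{k_n,R}\subseteq S_{k_0,R}$ for all $n$, so the standing hypothesis gives
\[
\mu(S_{k_n,R})\le\mu(S_{k_0,R})\le\gamma\,\mu(B_R)\qquad\text{for every }n\ge 0 .
\]
In particular the density condition \eqref{eq17frozen} of Lemma~\ref{lemma5frozen} is satisfied with $h=k_n$ and with the fixed $\gamma$; this is what makes the constant produced by that lemma uniform in $n$, which is crucial for the summation.

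For the core step, fix $n\ge 0$ and apply Lemma~\ref{lemma5frozen} with $h=k_n$ and $k=k_{n+1}$ (admissible since $k_{n+1}>k_n$), obtaining
\[
H_0\!\Big(\tfrac{k_{n+1}-k_n}{R}\Big)\,\mu(S_{k_{n+1},R})\le C\Big(\tfrac{\mu(S_{k_n,\lambda R})-\mu(S_{k_{n+1},\lambda R})}{\mu(B_R)}\Big)^{\!\xi}\int_{S_{k_n,2\lambda R}}H_0\!\Big(\tfrac{u-k_n}{R}\Big)\,\dd\mu .
\]
To control the right-hand integral I would use that on $S_{k_n,2\lambda R}\subseteq B_{3\lambda R}$ one has $k_n<u\le M$ $\mu$-a.e., hence
\[
0<u-k_n\le M-k_n=2^{-(n+1)}\osc(u,3\lambda R)=2\,(k_{n+1}-k_n),
\]
together with the elementary doubling inequality $H_0(2t)\le 2^q H_0(t)$ (immediate from $H_0(t)=t^p+a_0t^q$, and also a consequence of Lemma~\ref{lemmafrozen1}) and the doubling of $\mu$, which gives $\mu(S_{k_n,2\lambda R})\le\mu(B_{2\lambda R})\le C(C_D,\lambda)\,\mu(B_R)$. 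This bounds the integral by $C\,H_0\!\big(\tfrac{k_{n+1}-k_n}{R}\big)\,\mu(B_R)$, and since $H_0\!\big(\tfrac{k_{n+1}-k_n}{R}\big)>0$ we may cancel it to arrive at
\[
\mu(S_{k_{n+1},R})\le C\,\mu(B_R)\Big(\tfrac{\mu(S_{k_n,\lambda R})-\mu(S_{k_{n+1},\lambda R})}{\mu(B_R)}\Big)^{\!\xi},\qquad C=C(\gamma,C_{PI},C_D,\lambda,q,Q).
\]

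To conclude I would pass to the normalized quantities $\tilde c_j=\mu(S_{k_j,\lambda R})/\mu(B_R)$ and $c_j=\mu(S_{k_j,R})/\mu(B_R)$. The sequence $(\tilde c_j)$ is nonincreasing, and the previous display reads $c_{n+1}\le C(\tilde c_n-\tilde c_{n+1})^{\xi}$, i.e.\ $c_{n+1}^{1/\xi}\le C^{1/\xi}(\tilde c_n-\tilde c_{n+1})$. Summing over $n=0,\dots,N-1$, telescoping, and using $\tilde c_0\le\mu(B_{\lambda R})/\mu(B_R)\le C(C_D,\lambda)$ yields $\sum_{n=0}^{N-1}c_{n+1}^{1/\xi}\le C'$ with $C'=C'(\gamma,C_{PI},C_D,\lambda,q,Q)$. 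Since $(c_j)$ is nonincreasing and $t\mapsto t^{1/\xi}$ is increasing, each of the $N$ summands is $\ge c_N^{1/\xi}$, whence $N\,c_N^{1/\xi}\le C'$ and therefore $c_N\le (C'/N)^{\xi}$, which is exactly the claimed bound $\mu(S_{k_N,R})/\mu(B_R)\le C\,N^{-\xi}$ with the stated dependence.

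The step I expect to require the most care is the treatment of the right-hand integral in the core step: one has to combine the upper bound $u\le M$ valid only on the superlevel set, the doubling properties of $H_0$ from Lemma~\ref{lemmafrozen1}, and the doubling of $\mu$, and to check that passing from balls of radius $2\lambda R$ and $\lambda R$ to a ball of radius $R$ costs only a constant depending on $C_D$ and $\lambda$, so that the $H_0$-factors on the two sides genuinely cancel and leave a scale-invariant inequality. Once this per-step inequality is in place, the telescoping summation is routine and works for any $\xi>0$.
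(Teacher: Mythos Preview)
Your proposal is correct and follows essentially the same approach as the paper's proof: apply Lemma~\ref{lemma5frozen} at consecutive levels $k_n<k_{n+1}$, use $u-k_n\le M-k_n=2(k_{n+1}-k_n)$ together with the doubling of $H_0$ and $\mu$ to cancel the $H_0$-factors, and then telescope the resulting inequality after raising it to the power $1/\xi$. If anything, your write-up is slightly more careful than the paper's in explicitly disposing of the trivial case and in verifying that the density hypothesis of Lemma~\ref{lemma5frozen} is satisfied uniformly at every level $k_n\ge k_0$.
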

\begin{proof} 
 Let $k_j=M-2^{-(j+1)} (M-m)$, $j\in \mathbb{N} \cup \{0\}$. Therefore, $\lim_{j \to +\infty}k_j = M$. Note that $M-k_{j-1}=2^{-j} (M-m)$ and $k_j-k_{j-1}=2^{-(j+1)} (M-m)$. By Lemma \ref{lemma5frozen} and the doubling property, we deduce
	\begin{align*}
	H_0\left(\frac{2^{-(j+1)} (M-m)}{R}\right)\mu(S_{k_j,R})&	=H_0\left(\frac{(k_j-k_{j-1})}{R}\right)\mu(S_{k_j,R})\\ 
	&\leq C\left(\frac{\mu(S_{k_{j-1},\lambda R})-\mu(S_{k_{j},\lambda R})}{\mu(B_R)}\right)^{\xi}\int_{S_{k_{j-1},2 \lambda R}}
H_0\left(\frac{(u-k_{j-1})}{R}\right)\dd\mu\\
&\leq C\left(\frac{\mu(S_{k_{j-1},\lambda R})-\mu(S_{k_{j},\lambda R})}{\mu(B_R)}\right)^{\xi}\int_{B_{2 \lambda R}}
H_0\left(\frac{(M-k_{j-1})}{R}\right)\dd\mu\\
&\leq C\left(\frac{\mu(S_{k_{j-1},\lambda R})-\mu(S_{k_{j},\lambda R})}{\mu(B_R)}\right)^{\xi}\mu(B_R)H_0\left(\frac{2^{-j}(M-m)}{R}\right).
	\end{align*}

Therefore, for every $j\geq 0$, we obtain
	\begin{align*}
	\frac{\mu(S_{k_j,R})}{\mu(B_R)} \leq C\left(\frac{\mu(S_{k_{j-1},\lambda R})-\mu(S_{k_{j},\lambda R})}{\mu(B_R)}\right)^{\xi}.
	\end{align*}
 
\noindent If $n>j $, then $\mu(S_{k_n,  R})\leq \mu(S_{k_j,  R})$, and so
	\begin{align*}
		\left(\frac{\mu(S_{k_n,R})}{\mu(B_R)}\right)^{\frac{1}{\xi}} \leq C\left(\frac{\mu(S_{k_{j-1},\lambda R})-\mu(S_{k_{j},\lambda R})}{\mu(B_R)}\right).
	\end{align*}
	By summing the above inequality over $j=0,..., n-1$ and using the doubling property of the measure, we get
	\begin{align*}
		n\left(\frac{\mu(S_{k_n,R})}{\mu(B_R)}\right)^{\frac{1}{\xi}} \leq C\frac{\mu(B_{\lambda R})}{\mu(B_R)}\leq C=C(\gamma, C_{PI}, C_D,\lambda,q, Q).
	\end{align*}
 Therefore, 
 \begin{equation*}
    \dfrac{\mu(S_{k_n}, R)}{\mu(B_R)}\leq C n^{-\xi}.
 \end{equation*} 
 
 \end{proof}

\begin{lemma}\label{lemma7frozen} Let $B_{6\lambda R}\Subset\Omega$ be a ball, with $0<R\leq\min\lbrace 1, \frac{\diam (X)}{12\lambda}\rbrace$. 
	Assume $u\in N^{1,1}_{\textrm{loc}}(\Omega)$ with $H(\cdot, g_u)\in L^1_{\textrm{loc}}(\Omega)$ satisfies  $u\in DG_{H_0}^{+}(\Omega)$, and it is bounded from below. Then, for every $0< r\leq 3\lambda R$ and every $\kappa\in (0,1)$, there exists $\sigma\in (0,1)$, $\sigma=\sigma(C_{PI},C_D,p,q,\kappa)$, such that, if for some $\varepsilon>0$ the density condition
 \begin{equation}\label{eq20frozen}
     \mu\left(S_{M(r)-\varepsilon\osc(u,r), r}\right)\leq \sigma\mu(B_r)
 \end{equation}
holds, then
\begin{equation}\label{eq21frozen}
    u(x)\leq M(r)-\kappa\varepsilon\osc(u,r)
\end{equation}
holds a.e. in $B_{r/2}$.
 \end{lemma}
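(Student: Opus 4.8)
The plan is to run a De Giorgi iteration simultaneously in the level and in the radius, in the spirit of the proof of Theorem \ref{Theorem 4.2}, the only new feature being that it is the density hypothesis \eqref{eq20frozen} --- rather than a free choice of level increment --- that gets the iteration started. Write $M:=M(r)$, $\omega:=\osc(u,r)$; since $u$ is bounded below and lies in $DG_{H_0}^{+}(\Omega)$, the frozen Caccioppoli inequality makes it bounded above on $B_{3\lambda R}$ exactly as in Theorem \ref{Theorem 4.2}, so $\omega<\infty$, and we may assume $\omega>0$ since \eqref{eq21frozen} is trivial otherwise; translating $u$ (harmless, since it alters neither $g_u$, nor $DG_{H_0}^{+}(\Omega)$, nor $\omega$, nor the measure of the set in \eqref{eq20frozen}, and \eqref{eq21frozen} is translation‑equivariant) we may also assume $\essinf_{B_{3\lambda R}}u=0$. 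For $j\in\mathbb N\cup\{0\}$ I set $\rho_j:=\tfrac r2(1+2^{-j})$, $t_j:=\tfrac12(\rho_j+\rho_{j+1})$ (so $\rho_j-t_j=t_j-\rho_{j+1}=r\,2^{-j-3}$), and $k_j:=M-\varepsilon\omega\bigl(\kappa+(1-\kappa)2^{-j}\bigr)$, so that $\rho_0=r\searrow\rho_j\searrow r/2$ and $k_0=M-\varepsilon\omega\nearrow k_j\nearrow M-\kappa\varepsilon\omega$; note $B_{\rho_j}\Subset\Omega$ since $\rho_j\le r\le3\lambda R$ and $B_{6\lambda R}\Subset\Omega$, and that \eqref{eq20frozen} forces $k_0\ge\essinf_{B_r}u\ge0$, whence every $k_{j+1}\ge0$ and Lemma \ref{FCaccioppoliIneq} is applicable at these levels. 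Let $\tau_j$ be a $\tfrac1{t_j-\rho_{j+1}}$‑Lipschitz cutoff with $0\le\tau_j\le1$, $\tau_j\equiv1$ on $B_{\rho_{j+1}}$, $\mathrm{supp}\,\tau_j\subset B_{t_j}$, put $w_j:=\tau_j(u-k_{j+1})_+\in N^{1,1}_0(B_{t_j})$, and define
\[
\Phi_j:=\int_{B_{\rho_j}}H_0\!\Bigl(\tfrac{(u-k_j)_+}{r}\Bigr)\dd\mu,\qquad Y_j:=\frac{\Phi_j}{\mu(B_{\rho_j})\,H_0(\varepsilon\omega/r)}.
\]

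Throughout I would use the elementary almost‑homogeneity of $H_0$ (immediate from $H_0(t)=t^p+a_0t^q$, cf. Lemma \ref{lemmafrozen1}): $\lambda^pH_0(t)\le H_0(\lambda t)\le\lambda^qH_0(t)$ for $\lambda\ge1$ and $\lambda^qH_0(t)\le H_0(\lambda t)\le\lambda^pH_0(t)$ for $0<\lambda\le1$, together with $H_0(a+b)\le2^{q-1}(H_0(a)+H_0(b))$ and the monotonicity of $H_0$. By the Leibniz rule $g_{w_j}\le g_{(u-k_{j+1})_+}+\tfrac1{t_j-\rho_{j+1}}(u-k_{j+1})_+$ on $B_{t_j}$, with $g_{w_j}=0$ a.e.\ off $S_{k_{j+1},t_j}$; applying the frozen Caccioppoli inequality (Lemma \ref{FCaccioppoliIneq}) on $B_{t_j}\subset B_{\rho_j}$, absorbing the factor $(\rho_j-t_j)^{-1}\asymp2^{j}/r$ by almost‑homogeneity, and using $(u-k_{j+1})_+\le(u-k_j)_+$, gives $\int_{B_{t_j}}H_0(g_{w_j})\dd\mu\le C2^{jq}\Phi_j$. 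Now apply the frozen Sobolev--Poincar\'e inequality for zero boundary values (Theorem \ref{FSPineq0boundary}) on $B_{t_j}$ --- legitimate since $t_j<r\le\diam(X)/4$ --- followed by H\"older's inequality ($d_2<1$) using that $g_{w_j}$ is concentrated on $S_{k_{j+1},t_j}$, to obtain
\[
\Bigl(\dashint_{B_{t_j}}H_0\!\bigl(\tfrac{w_j}{t_j}\bigr)^{d_1}\dd\mu\Bigr)^{1/d_1}\le C\,2^{jq}\,\mu(B_{t_j})^{-1/d_2}\,\mu(S_{k_{j+1},t_j})^{1/d_2-1}\,\Phi_j .
\]
Since $t_j<r$ and $w_j=(u-k_{j+1})_+$ on $B_{\rho_{j+1}}$, H\"older's inequality ($d_1>1$) gives $\Phi_{j+1}\le\bigl(\int_{B_{t_j}}H_0(\tfrac{w_j}{t_j})^{d_1}\dd\mu\bigr)^{1/d_1}\mu(S_{k_{j+1},\rho_{j+1}})^{1-1/d_1}$, and combining this with the last display, using $\mu(S_{k_{j+1},\rho_{j+1}})\le\mu(S_{k_{j+1},t_j})$ and the doubling comparability of $\mu(B_{\rho_{j+1}}),\mu(B_{t_j}),\mu(B_{\rho_j})$, yields
\[
\Phi_{j+1}\le C\,2^{jq}\,\mu(B_{\rho_j})^{-\delta}\,\mu(S_{k_{j+1},t_j})^{\delta}\,\Phi_j,\qquad \delta:=\tfrac1{d_2}-\tfrac1{d_1}>0 .
\]

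Finally, on $S_{k_{j+1},t_j}$ one has $(u-k_j)_+>k_{j+1}-k_j=\varepsilon\omega(1-\kappa)2^{-j-1}$, so monotonicity and almost‑homogeneity give $\mu(S_{k_{j+1},t_j})\,(1-\kappa)^q2^{-(j+1)q}H_0(\varepsilon\omega/r)\le\Phi_j$; substituting this into the preceding display and dividing by $\mu(B_{\rho_{j+1}})H_0(\varepsilon\omega/r)$, one checks that all powers of $H_0(\varepsilon\omega/r)$ cancel exactly and obtains a recursion of the form $Y_{j+1}\le C^{*}b^{\,j}Y_j^{1+\delta}$, with $b=2^{q(1+\delta)}>1$ and $C^{*},b,\delta$ depending only on $C_{PI},C_D,p,q,\kappa$ (and on neither $\varepsilon,\omega,r$ nor $a_0$). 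On the other hand $(u-k_0)_+\le M-k_0=\varepsilon\omega$ on $B_r$ and $S_{k_0,r}=S_{M(r)-\varepsilon\osc(u,r),r}$, so \eqref{eq20frozen} gives $\Phi_0\le H_0(\varepsilon\omega/r)\mu(S_{k_0,r})\le\sigma\mu(B_r)H_0(\varepsilon\omega/r)$, i.e.\ $Y_0\le\sigma$. Taking $\sigma=\sigma(C_{PI},C_D,p,q,\kappa):=(C^{*})^{-1/\delta}b^{-1/\delta^{2}}\in(0,1)$, the standard iteration lemma (Lemma 7.1 of \cite{G}) gives $Y_j\to0$, hence $\Phi_j\to0$; since $\Phi_j\ge\int_{B_{r/2}}H_0\bigl(\tfrac{(u-(M-\kappa\varepsilon\omega))_+}{r}\bigr)\dd\mu$ and $H_0$ vanishes only at $0$, we conclude $u\le M(r)-\kappa\varepsilon\osc(u,r)$ a.e.\ in $B_{r/2}$, which is \eqref{eq21frozen}. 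I expect the main obstacle to be the bookkeeping forced by the non‑homogeneity of $H_0$: the dyadic scaling factors must be carried through Caccioppoli, the Leibniz rule and the Sobolev inequality via the almost‑homogeneity bounds, and one must verify the exact cancellation of the powers of $H_0(\varepsilon\omega/r)$ in $Y_j$ --- it is precisely this cancellation that yields the intrinsic, $a_0$‑independent threshold $\sigma$; the geometric and measure‑theoretic parts of the argument are routine adaptations of the proof of Theorem \ref{Theorem 4.2}.
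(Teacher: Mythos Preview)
Your proposal is correct and follows essentially the same De Giorgi iteration scheme as the paper's proof: same shrinking radii $\rho_j=\tfrac r2(1+2^{-j})$, same increasing levels $k_j$, same use of cut-offs together with Theorem \ref{FSPineq0boundary} and the frozen Caccioppoli inequality, and the same appeal to Lemma 7.1 of \cite{G}. The only cosmetic difference is that the paper iterates directly on the density ratios $\Sigma_i=\mu(S_{k_i,\rho_i})/\mu(B_{\rho_i})$ (obtaining the self-improvement exponent $1-1/d_1$), whereas you iterate on the normalized energy $Y_j=\Phi_j/\bigl(\mu(B_{\rho_j})H_0(\varepsilon\omega/r)\bigr)$ and squeeze out the slightly larger exponent $\delta=1/d_2-1/d_1$; both choices lead to the same $a_0$-independent threshold $\sigma$.
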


\begin{proof}
    Consider a sequence of nested balls $\lbrace B_{\rho_i}\rbrace$ concentric to $B_R$ for $i\geq 0$, where $\rho_i=\frac{r}{2}(1+2^{-i})\searrow\frac{r}{2}$, and define also $\bar{\rho_i}=(\rho_i+\rho_{i+1})/2$. We use Lipschitz cut-off functions $\eta_i\in \textrm{Lip}_0(B_{\bar{\rho_i}})$ such that $\eta_i= 1$ on $B_{\rho_i}$ and $g_{\eta_i}\leq\frac{2^{i}}{r}$. We also define the levels $k_i=M(r)-\kappa\varepsilon\osc(u,r)-(1-\kappa)\varepsilon\osc(u,r)/2^{i}\nearrow M(r)-\kappa\varepsilon\osc(u,r)$. Lastly, we define $v_i=\eta_i(u-k_i)_+$. By definition and Leibniz's rule, we obtain

$$
g_{v_i}\leq g_{(u-k_i)_+}\eta_i+(u-k_i)_+ g_{\eta_i}\leq g_{(u-k_i)_+}\eta_i+\frac{2^i}{r}(u-k_i)_+. 
$$

By Theorem \ref{FSPineq0boundary}, H\"older's inequality, the definition of $DG_{H_0}^{+}(\Omega)$ and the doubling property of our measure, we obtain
\begin{align*}
    H_0\left(\frac{(1-\kappa)\varepsilon\osc(u,r)}{2^{i+1}r}\right)\mu(S_{k_{i+1},\rho_{i+1}})&=H_0\left(\frac{k_{i+1}-k_i}{R}\right)\mu(S_{k_{i+1},\rho_{i+1}})\leq C\int_{S_{k_{i},\bar{\rho_{i}}}}H_0\left(\frac{v_i}{R}\right)\dd\mu\\
&\leq\mu(S_{k_{i},\bar{\rho_{i}}})^{1-\frac{1}{d_1}}\mu(B_{\bar{\rho_{i}}})^{\frac{1}{d_1}}\left(\dashint_{B_{\bar{\rho_{i}}}}H_0\left(\frac{v_i}{r}\right)^{d_1}\dd\mu \right)^{\frac{1}{d_1}}\\
    &\leq C\mu(S_{k_{i},\bar{\rho_{i}}})^{1-\frac{1}{d_1}}\mu(B_{\bar{\rho_{i}}})^{\frac{1}{d_1}}\left(\dashint_{B_{\bar{\rho_{i}}}}H_0\left(g_{v_i}\right)^{d_2}\dd\mu \right)^{\frac{1}{d_2}}  \\ 
    &\leq C\mu(S_{k_{i},\bar{\rho_{i}}})^{1-\frac{1}{d_1}}\mu(B_{\bar{\rho_{i}}})^{\frac{1}{d_1}}\dashint_{B_{\bar{\rho_{i}}}}H_0\left(g_{v_i}\right)\dd\mu\\
       &= C\left(\frac{\mu(S_{k_{i},\bar{\rho_{i}}})}{\mu(B_{\bar{\rho_{i}}})}\right)^{1-\frac{1}{d_1}} \int_{B_{\bar{\rho_i}}}H_0\left(g_{v_i}\right)\dd\mu\\
    &\leq C\left(\frac{\mu(S_{k_{i},\bar{\rho_{i}}})}{\mu(B_{\bar{\rho_{i}}})}\right)^{1-\frac{1}{d_1}} \Bigg(\int_{B_{\rho_i}}H_0\left(\frac{2^i(u-k_i)_+}{r}\right)\dd\mu\\
    &\qquad\qquad\qquad\qquad\qquad\qquad+\int_{B_{\bar{\rho_i}}}H_0\left(g_{(u-k_i)_+}\right)\dd\mu\Bigg)\\    
    &\leq C\left(\frac{\mu(S_{k_{i},\rho_{i}})}{\mu(B_{\rho_{i}})}\right)^{1-\frac{1}{d_1}} \Bigg(\int_{B_{\rho_i}}H_0\left(\frac{(u-k_i)_+}{(\rho_i-\bar{\rho_i})}\right)\dd\mu\\
   &\qquad\qquad\qquad\qquad\qquad\qquad +\int_{B_{\rho_i}}H_0\left(\frac{2^{i}}{r}(u-k_i)_+\right)\dd\mu\Bigg)\\
    &=C\left(\frac{\mu(S_{k_{i},\rho_{i}})}{\mu(B_{\rho_{i}})}\right)^{1-\frac{1}{d_1}} \Bigg(\int_{B_{\rho_i}}H_0\left(\frac{2^{i+3}}{r}(u-k_i)_+\right)\dd\mu\\
    &\qquad\qquad\qquad\qquad\qquad\qquad+\int_{B_{\rho_i}}H_0\left(\frac{2^{i}}{r}(u-k_i)_+\right)\dd\mu\Bigg).
\end{align*}
 Now, by Lemma \ref{lemmafrozen1} and induction we have the following inequalities involving $H_0$,
 \begin{equation}\label{frozeninduction1}
 H_0(2^{n}a)\leq 2^{nq}qH_0(a),\qquad\textrm{for }a\geq0\textrm{ and }n\in\mathbb{N}
 \end{equation}
 and
\begin{equation}\label{frozeninduction2}
 \frac{2^{-nq}}{q}H_0(b)\leq H_0(2^{-n}b),\qquad\textrm{for }b\geq0 \textrm{ and }n\in\mathbb{N}.
 \end{equation} 
 
Therefore, continuing the last set of inequalities we obtain
\begin{align*}
    H_0\left(\frac{(1-\kappa)\varepsilon\osc(u,r)}{2^{i+1}r}\right)\mu(S_{k_{i+1},\rho_{i+1}})&\leq C\left(\frac{\mu(S_{k_{i},\rho_{i}})}{\mu(B_{\rho_{i}})}\right)^{1-\frac{1}{d_1}}(2^{i})^{q}\left(\int_{B_{\rho_i}}H_0\left(\frac{(u-k_i)_+}{r}\right)\dd\mu\right)\\
    &\leq   C\left(\frac{\mu(S_{k_{i},\rho_{i}})}{\mu(B_{\rho_{i}})}\right)^{1-\frac{1}{d_1}} (2^{i})^{q}H_0\left(\frac{M(r)-k_i}{r}\right)\mu(S_{k_i, \rho_i})\\
    &\leq   C\left(\frac{\mu(S_{k_{i},\rho_{i}})}{\mu(B_{\rho_{i}})}\right)^{1-\frac{1}{d_1}}(2^{i})^{q}H_0\left(\frac{M(r)-k_0}{r}\right)\mu(S_{k_i, \rho_i})\\
     &\leq   C\left(\frac{\mu(S_{k_{i},\rho_{i}})}{\mu(B_{\rho_{i}})}\right)^{1-\frac{1}{d_1}} (2^{i})^{q}H_0\left(\frac{\varepsilon\osc(u,r)}{r}\right)\mu(S_{k_i, \rho_i}).
\end{align*}
By \eqref{frozeninduction2} and the definition of $H_0$,  we then obtain
$$
 \frac{2^{-i-1}q}{q}(1-\kappa)^{q}H_0\left(\frac{\varepsilon\osc(u,r)}{r}\right)\mu(S_{k_{i+1},\rho_{i+1}})\leq   C\left(\frac{\mu(S_{k_{i},\rho_{i}})}{\mu(B_{\rho_{i}})}\right)^{1-\frac{1}{d_1}}(2^{i})^{q}H_0\left(\frac{\varepsilon\osc(u,r)}{r}\right)\mu(S_{k_i, \rho_i}).
$$
Therefore,
$$
\frac{\mu(S_{k_{i+1},\rho_{i+1}})}{\mu(B_{\rho_{i+1}})}\leq \frac{\mu(S_{k_{i+1},\rho_{i+1}})}{\mu(B_{\rho_{i}})}\leq C \left(\frac{\mu(S_{k_{i},\rho_{i}})}{\mu(B_{\rho_{i}})}\right)^{1+\frac{d_1-1}{d_1}}(2^{q+1})^i(1-\kappa)^{-q},
$$
where $C=C(C_{PI}, C_D,p,q)$. Therefore, if we define $\Sigma_i=\frac{\mu(S_{k_{i},\rho_{i}})}{\mu(B_{\rho_{i}})}$, we obtain the recursive estimate
$$
\Sigma_{i+1}\leq \frac{C(2^{q+1})^{i}}{(1-\kappa)^{-q}}\Sigma_{i}^{1+\frac{d_1-1}{d_1}}, \qquad C>0.
$$
In order to prove \eqref{eq21frozen}, we need $\Sigma_i\rightarrow 0$ as $i\rightarrow +\infty$. By Lemma 7.1 in \cite{G}, this happens if 
$$
\Sigma_0\leq\left(\frac{C}{(1-\kappa)^q}\right)^{-\frac{d_1}{d_1-1}}(2^{q+1})^{-\left(\frac{d_1}{d_1-1}\right)^{2}}.
$$
Meaning, 
$$
\frac{\mu(S_{M(r)-\varepsilon\osc(u,r),r})}{\mu(B_r)}<\sigma\leq\left(\frac{C}{(1-\kappa)^q}\right)^{-\frac{d_1}{d_1-1}}(2^{q+1})^{-\left(\frac{d_1}{d_1-1}\right)^{2}},
$$
which is equation \eqref{eq20frozen}.

Therefore, by choosing $\sigma=\sigma(C_{PI},C_D,p,q,\kappa)$, small enough, we indeed obtain \eqref{eq21frozen} as wanted.

\end{proof}

Now, we are ready to prove the oscillation reduction result for the frozen functional. 

\begin{lemma}\label{lemma8frozen} Let $B_{6\lambda R}\Subset\Omega$ be a ball, with $0<R\leq\min\lbrace 1, \frac{\diam (X)}{12\lambda}\rbrace$. 
	Assume $u\in N^{1,1}_{\textrm{loc}}(\Omega)$ with $H(\cdot, g_u)\in L^1_{\textrm{loc}}(\Omega)$ satisfies  $u\in DG_{H_0}^{+}(\Omega)$, and it is bounded from below. Let $0<\rho<R$. Then, there exists $0 < \eta < 1$ such that
	$$\osc(u, 3\lambda \rho) \leq 4^\eta
	\left(\frac{\rho }{R}\right)^\eta \osc(u, 3\lambda R),$$
 with $\eta=\eta(C_{PI},C_D,p,q)$.
\end{lemma}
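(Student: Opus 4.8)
The plan is to combine the two density-type results just proved -- the decay estimate from Lemma \ref{lemma6frozen} and the De Giorgi oscillation step from Lemma \ref{lemma7frozen} -- to obtain a single-step oscillation-reduction inequality of the form $\osc(u, 3\lambda R/2)\le \beta\,\osc(u,3\lambda R)$ for some fixed $\beta=\beta(C_{PI},C_D,p,q)\in(0,1)$, and then to iterate it via the standard dyadic-geometric-decay lemma (e.g.\ Lemma 7.3 in \cite{G}). First I would fix $\kappa=1/2$ in Lemma \ref{lemma7frozen}, which produces a threshold $\sigma=\sigma(C_{PI},C_D,p,q)\in(0,1)$. With this $\sigma$ playing the role of $\gamma$, I would apply Lemma \ref{lemma6frozen}: since $\mu(S_{k_n,R})/\mu(B_R)\le C n^{-\xi}$ with $\xi>0$, I can choose $n_0=n_0(C_{PI},C_D,p,q)$ large enough that $\mu(S_{k_{n_0},R})\le \sigma\mu(B_R)$, where $k_{n_0}=M-2^{-(n_0+1)}\osc(u,3\lambda R)$ and $M=M(3\lambda R)$. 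Note that the density hypothesis of Lemma \ref{lemma6frozen} with $k_0=(M+m)/2$ is automatically satisfied with $\gamma=\gamma(Q,C_D)<1$ as in \eqref{DPPI1} (the same measure-density argument from Remark \ref{remarkDPPI}), so there is no loss of generality there.

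Next, with $\varepsilon=2^{-(n_0+1)}$ and $r=3\lambda R$ (so that $M(r)-\varepsilon\osc(u,r)=k_{n_0}$), the conclusion $\mu(S_{k_{n_0},R})\le\sigma\mu(B_R)$ is exactly the hypothesis \eqref{eq20frozen} of Lemma \ref{lemma7frozen}; applying that lemma with $\kappa=1/2$ gives
\begin{equation*}
u(x)\le M(3\lambda R)-\tfrac{1}{2}\varepsilon\,\osc(u,3\lambda R)\qquad\text{a.e. in }B_{r/2}=B_{3\lambda R/2}.
\end{equation*}
Taking the essential supremum over $B_{3\lambda R/2}$ and subtracting $m(3\lambda R/2)\ge m(3\lambda R)$ yields
\begin{equation*}
\osc(u,3\lambda R/2)\le \bigl(1-\tfrac{\varepsilon}{2}\bigr)\,\osc(u,3\lambda R)=:\beta\,\osc(u,3\lambda R),
\end{equation*}
with $\beta=1-2^{-(n_0+2)}\in(0,1)$ depending only on $C_{PI},C_D,p,q$. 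This is the key one-step contraction; I expect the main obstacle to be precisely this bookkeeping -- verifying that the radii in Lemmas \ref{lemma6frozen} and \ref{lemma7frozen} (which involve dilations by $\lambda$ and the nested radii $R,\lambda R,2\lambda R,3\lambda R$) are all admissible under the running hypothesis $0<R\le\min\{1,\diam(X)/(12\lambda)\}$ and $B_{6\lambda R}\Subset\Omega$, and that the balls $B_R$ used as the "base ball" in Lemma \ref{lemma6frozen} are indeed those sitting inside $B_{3\lambda R}$.

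Finally, I would iterate. Replacing $R$ by $(3/2)^{-1}$-scaled radii, the inequality $\osc(u,3\lambda\rho)\le\beta\,\osc(u,3\lambda\cdot\tfrac32\rho)$ holds for all admissible $\rho$; a routine induction shows $\osc(u,3\lambda\cdot (2/3)^{j}R)\le\beta^{j}\osc(u,3\lambda R)$ for every $j\in\mathbb{N}$. For general $0<\rho<R$, pick $j$ with $(2/3)^{j+1}R<\rho\le(2/3)^{j}R$; using monotonicity of the oscillation in the radius and writing $\beta^{j}=\bigl((2/3)^{j}\bigr)^{\eta}$ with $\eta=\log(1/\beta)/\log(3/2)>0$, one gets
\begin{equation*}
\osc(u,3\lambda\rho)\le\osc\bigl(u,3\lambda (2/3)^{j}R\bigr)\le\beta^{j}\osc(u,3\lambda R)\le \Bigl(\tfrac{3}{2}\tfrac{\rho}{R}\Bigr)^{\eta}\cdot C\,\osc(u,3\lambda R),
\end{equation*}
and absorbing the fixed constant into the prefactor (or adjusting $\eta$ slightly) gives the stated bound $\osc(u,3\lambda\rho)\le 4^{\eta}(\rho/R)^{\eta}\osc(u,3\lambda R)$ with $\eta=\eta(C_{PI},C_D,p,q)$. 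Alternatively, and more cleanly, one invokes the standard iteration lemma (Lemma 7.3 in \cite{G}): if $\phi(\rho)=\osc(u,3\lambda\rho)$ is nondecreasing and satisfies $\phi(t\rho)\le\beta\phi(\rho)$ for a fixed ratio $t=2/3$, then $\phi(\rho)\le C(\rho/R)^{\eta}\phi(R)$ for all $0<\rho\le R$, which is exactly the desired conclusion after renaming constants.
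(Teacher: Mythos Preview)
Your approach is essentially the paper's: fix $\kappa=1/2$ in Lemma~\ref{lemma7frozen} to produce $\sigma$, use Lemma~\ref{lemma6frozen} to find a level $k_{n_0}$ whose superlevel set has density below $\sigma$, apply Lemma~\ref{lemma7frozen} with $\varepsilon=2^{-(n_0+1)}$ to obtain the one-step contraction $\osc(u,3\lambda R/2)\le(1-\varepsilon/2)\,\osc(u,3\lambda R)$, and then iterate.

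Two minor corrections. First, the iteration factor $2/3$ is a slip: the one-step inequality halves the radius, so the scaling ratio is $1/2$ (the paper actually iterates in steps of $1/4$, setting $\eta=-\log\tau/\log 4$), but this is cosmetic. Second, and more substantively, your claim that the density hypothesis $\mu(S_{k_0,R})\le\gamma\mu(B_R)$ of Lemma~\ref{lemma6frozen} ``is automatically satisfied $\ldots$ as in \eqref{DPPI1}'' is incorrect: display \eqref{DPPI1} compares $\mu(B_r)$ with $\mu(B_{2r})$ and says nothing about superlevel sets of $u$. The standard device for arranging this condition is the dichotomy in Remark~\ref{-uinsteadofu} (pass to $-u$ if necessary), which, however, needs $u\in DG_{H_0}^-$ in addition to $DG_{H_0}^+$. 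The paper's own proof does not address this hypothesis explicitly either and simply invokes Lemma~\ref{lemma6frozen}, so the looseness is shared with the original rather than peculiar to your argument; you are also right to flag the radii bookkeeping between $B_R$ and $B_{3\lambda R}$ as the place where care is needed.
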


\begin{proof}
	We consider $M$ and $m$ as in Lemma \ref{lemma7frozen}. By Lemma \ref{lemma7frozen}, for $\kappa=\frac{1}{2}$, there exists $\sigma\in(0,1)$ such that if 

 $$
 \mu(S_{M-\varepsilon(M-m), 3\lambda R})\leq \sigma\mu(B_{3\lambda R}),
 $$
 for some $\varepsilon>0$, then
 $$
 u(x)\leq M-\frac{\varepsilon}{2}(M-m),
 $$
$\mu$-a.e. in $B_{\frac{3\lambda R}{2}}$. This would imply
 $$
 M\left(\frac{3\lambda R}{2}\right)=\esssup_{B_{\frac{3\lambda R}{2}}} u\leq M-\frac{\varepsilon}{2}(M-m).
 $$
 Now, 
 $$
 -m\left(\frac{3\lambda R}{2}\right)=-\essinf_{B_{\frac{3\lambda R}{2}}} u\leq -\essinf_{B_{3\lambda R}}u=-m.
 $$
 Adding these two inequalities gives
 \begin{equation}\label{eq11noteslocal}
     \osc\left(u,\frac{3\lambda R}{2}\right)\leq \left(1-\frac{\varepsilon}{2}\right)\osc(u, 3\lambda R).
 \end{equation}
So, all we are left to do is determine $\varepsilon>0$. If we define $k_n=M-\frac{1}{2^{n+1}}(M-m)$. By Lemma \ref{lemma7frozen} and the doubling property we have
$$
\frac{\mu(S_{k_n, 3\lambda R})}{\mu(B_{3\lambda R})}\leq C n^{-\xi}\rightarrow 0, \qquad\textrm{when }n\rightarrow +\infty.
$$
 Therefore, by choosing $n^*=n^*(\xi,\sigma)$ such that
 $$
 Cn^{-\xi}<\sigma,\qquad\textrm{for all }n\geq n^{*},
 $$
 we then have that the $\varepsilon>0$ we were looking for is $\varepsilon=2^{-(n^*+1)}$, $\varepsilon=\varepsilon(C_{PI},C_D,p,q)$. Therefore, by \eqref{eq11noteslocal}, we obtain
 \begin{equation}\label{eq12noteslocal}
     \osc\left(u,\frac{3\lambda R}{2}\right)<\tau \osc(u,3\lambda R)
 \end{equation}
where $\tau=1-2^{-(n^*+2)}=\tau(C_{PI},C_D,p,q)$.

	Now, we consider an index $j \geq 1$ such that $$4^{j-1} \leq\frac{R}{\rho}  < 4^j.$$ Then, from inequality \eqref{eq12noteslocal}, we get
	$$\osc(u, 3\lambda \rho) \leq \tau^{j-1} \osc(u, 3\lambda  4^{j-1}\rho) \leq \tau^{j-1} \osc(u,3 \lambda R).$$
	We observe that $\tau= 4^{\log_4 \tau}= 4^{\frac{\log \tau}{\log 4}}$ and we deduce that
	$$ \tau^{j-1}=4^{\frac{\log \tau}{log 4}(j-1)}=4^{-(j-1)\left(-\frac{\log \tau}{log 4}\right)}=\left(\frac{4}{4^j}\right)^{-\frac{\log \tau}{\log 4}}\leq\left(\frac{4}{\frac{R}{\rho}}\right)^{-\frac{\log \tau}{\log 4}}= 4^{\eta} \left(\frac{R}{\rho}\right)^{-\eta} ,$$ where $\eta=-\frac{\log\tau}{\log 4}< 1$, $\eta=\eta(C_{PI},C_D,p,q)$. At the end, we obtain
	$$\osc(u, 3\lambda \rho)) \leq 4^{\eta} \left(\frac{R}{\rho}\right)^{-\eta} \osc(u, 3\lambda R) ,$$ that completes the proof. 
\end{proof}

\subsection{Weak Harnack inequalities}
This section aims to prove some weak Harnack inequalities for frozen functionals. 

\begin{theorem}\label{upperharnackineqfrozen} Let $B_{6\lambda R}\Subset \Omega$ be a ball, such that $0<R\leq\min\lbrace 1, \frac{\diam (X)}{12\lambda}\rbrace$. Assume $u\in DG_{H_0}(\Omega)$. Let $t_+>0$ and $B_{\rho}\subset B_R$, $0<\rho<R$, then there is $C=C(C_{PI},C_D, p,q,t_+,Q)$ such that

$$
\esssup_{B_{\rho}} u\leq \frac{C}{(1-\frac{\rho}{R})^{\frac{Q}{t_+}}}\left(\dashint_{B_R}u^{t_+}\dd\mu\right)^{\frac{1}{t_+}}.
$$
\end{theorem}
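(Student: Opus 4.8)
The plan is a De Giorgi--Nash--Moser argument carried out in two stages: first an $L^p$--$L^\infty$ sup bound for members of $DG_{H_0}(\Omega)$, obtained by iterating on super-level sets exactly as in the proof of Theorem~\ref{Theorem 4.2}; then the classical lowering-of-the-exponent device to replace the $L^p$ average by an arbitrary $L^{t_+}$ average.

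\textbf{Stage 1 (frozen sup estimate).} I would first prove that there exist $C=C(C_{PI},C_D,p,q,Q)$ and $\gamma=\gamma(p,q,C_D)>0$, \emph{independent of $a_0$}, such that for all concentric balls $B_\sigma\subset B_s\subseteq B_R$ and all $k\ge 0$,
\[
\esssup_{B_\sigma}(u-k)_+\ \le\ \frac{C}{(1-\sigma/s)^{\gamma}}\,(s-\sigma)\,H_0^{-1}\!\left(\dashint_{B_s}H_0\!\left(\frac{(u-k)_+}{s-\sigma}\right)\dd\mu\right),
\]
the right--hand side being the scale--covariant quantity adapted to $H_0$ (for $a_0=0$ it reduces to $C(1-\sigma/s)^{-Q/p}(\dashint_{B_s}(u-k)_+^p\,\dd\mu)^{1/p}$). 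The proof repeats the iteration of Theorem~\ref{Theorem 4.2} with $\rho_n=\sigma+(s-\sigma)2^{-n}$ and levels $k_n\nearrow k+d$, coupling the frozen Caccioppoli inequality (Lemma~\ref{FCaccioppoliIneq}) applied to $(u-k_n)_+$ with a cut--off, the frozen Sobolev inequality for functions vanishing on a large set (Theorem~\ref{FSPineq0largeSet}) applied to $\eta_n(u-k_n)_+$, and H\"older's inequality, so as to reach a recursion $\psi_{n+1}\le C\,b^{\,n}\psi_n^{1+\theta}$ with $\theta=\frac1{d_2}-1>0$, and concluding with the iteration lemma (Lemma~7.1 in \cite{G}). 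Only the membership $u\in DG^+_{H_0}(\Omega)$ is used here. The decisive point is that, unlike Lemma~\ref{lemmaestimate1}, Theorem~\ref{FSPineq0largeSet} carries \emph{no} factor of the form $(1+\|g_u\|_{L^p}^{q-p})$, so that neither $\|u\|_{N^{1,p}}$ nor $a_0$ enters the constants. Taking $k=k_0$ this step also delivers local boundedness of $u$ (using $DG_{H_0}^{-}(\Omega)$ for the lower bound), so that the essential supremum in the statement is finite; here one also uses $H_0(u)\in L^1_{\loc}(\Omega)$, which follows from $H_0(g_u)\in L^1_{\loc}(\Omega)$ by Theorem~\ref{FSPineq0boundary}.

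\textbf{Stage 2 (lowering the exponent).} We may assume $u\ge 0$ (otherwise replace $u$ by $u_+\in DG^+_{H_0}(\Omega)$, which only enlarges the left--hand side). Put $k=0$, write $M_\sigma=\esssup_{B_\sigma}u$, and fix $R/2\le\sigma<s\le R$. For $0<t_+\le p$ the elementary homogeneity bound $H_0(\lambda t)\le\lambda^{t_+}H_0(t)$ for $0\le\lambda\le1$ (valid since $t_+\le p\le q$) applied with $\lambda=u/M_s$, together with the doubling property, gives
\[
\dashint_{B_s}H_0\!\left(\frac{u}{s-\sigma}\right)\dd\mu\ \le\ C_D\,M_s^{-t_+}\,H_0\!\left(\frac{M_s}{s-\sigma}\right)\dashint_{B_R}u^{t_+}\,\dd\mu .
\]
Inserting this into Stage~1 and carrying out a \emph{phase analysis} — according to whether $a_0\bigl(M_s/(s-\sigma)\bigr)^{q-p}\lesssim 1$ or $\gtrsim 1$, using in each case the corresponding branch of the bound $H_0^{-1}(y)\le\min\{y^{1/p},(y/a_0)^{1/q}\}$ — the powers of $a_0$ cancel and the factor $s-\sigma$ in front combines with the scale inside $H_0$, leaving
\[
M_\sigma\ \le\ \frac{C}{(1-\sigma/s)^{\gamma_\ell}}\,M_s^{\,1-t_+/\ell}\left(\dashint_{B_R}u^{t_+}\,\dd\mu\right)^{1/\ell},\qquad \ell\in\{p,q\},\quad \gamma_\ell\,\frac{\ell}{t_+}=\frac{Q}{t_+}.
\]
Since $1-t_+/\ell<1$, Young's inequality reabsorbs the power of $M_s$ into $\tfrac12 M_\sigma$, yielding $M_\sigma\le\tfrac12 M_s+C(1-\sigma/s)^{-Q/t_+}(\dashint_{B_R}u^{t_+}\,\dd\mu)^{1/t_+}$; the iteration lemma applied to $\sigma\mapsto M_\sigma$ on $[R/2,R]$ then gives the estimate on $B_{R/2}$. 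Finally, running the scheme from $B_R$ down to $B_\rho$, with Lemma~\ref{lemm3.3} controlling ratios of measures of concentric balls by the corresponding ratio of radii raised to the power $Q$, upgrades it to a general $B_\rho$ with the factor $(1-\rho/R)^{-Q/t_+}$. (The case $t_+>p$ is obtained similarly, from a parallel Moser--type iteration on the exponents $t_+\chi^{\,n}$ produced by the frozen Sobolev inequality.)

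\textbf{Main obstacle.} Stage~1 is routine (it is the frozen counterpart of Theorem~\ref{Theorem 4.2}). The crux is the phase analysis of Stage~2: one must organise the exponent reduction so that the $a_0$--dependent terms \emph{cancel} rather than accumulate — this is exactly where the homogeneity of $H_0$ from Lemma~\ref{lemmafrozen1} and the restriction $t_+\le p\le q$ enter — and one must verify that the $L^p$- and the $L^q$-scaling of the De Giorgi iteration lead, \emph{after Young's inequality}, to the \emph{same} final exponent $Q/t_+$ rather than to a worse one.
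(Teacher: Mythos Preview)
Your Stage~1 is essentially the paper's argument: a De~Giorgi iteration coupling the frozen Caccioppoli inequality with the frozen Sobolev--Poincar\'e inequality (the paper uses Theorem~\ref{FSPineq0boundary} for compactly supported functions rather than Theorem~\ref{FSPineq0largeSet}, since $\eta_n(u-k_n)_+$ has zero boundary values, but this is cosmetic). The iteration produces
\[
H_0\!\left(\esssup_{B_{R/2}}\frac{u_+}{R}\right)\le C\dashint_{B_R}H_0\!\left(\frac{u_+}{R}\right)\dd\mu,
\]
exactly as in the paper.

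Your Stage~2 diverges from the paper, and the route you propose is more complicated than necessary. The paper bypasses the phase analysis altogether by observing that $G(t)=H_0(t^{1/q})$ is \emph{concave} (this follows from Lemma~\ref{lemmafrozen1}, since $G''\le 0$ reduces to $th_0'(t)/h_0(t)\le q-1$). Jensen's inequality then gives
\[
\dashint_{B_R}H_0\!\left(\frac{u_+}{R}\right)\dd\mu=\dashint_{B_R}G\!\left(\Big(\frac{u_+}{R}\Big)^{q}\right)\dd\mu\le G\!\left(\dashint_{B_R}\Big(\frac{u_+}{R}\Big)^{q}\dd\mu\right)=H_0\!\left(\Big(\dashint_{B_R}\frac{u_+^{q}}{R^{q}}\,\dd\mu\Big)^{1/q}\right),
\]
so that $\esssup_{B_{R/2}}u_+\le C(\dashint_{B_R}u_+^{q}\,\dd\mu)^{1/q}$ in one line, with no $a_0$ appearing. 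The exponent $Q/q$ then enters only through the doubling property when passing from $B_{R/2}$ to an arbitrary $B_\rho$, and the final lowering to $t_+$ is the standard Young--iteration step.

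Your phase analysis is not wrong in spirit, but the claim $\gamma_\ell\cdot\ell/t_+=Q/t_+$ is unjustified as written: your Stage~1 delivers a \emph{single} exponent $\gamma$ from the De~Giorgi recursion, so after Young's inequality the two cases yield $(1-\sigma/s)^{-\gamma p/t_+}$ and $(1-\sigma/s)^{-\gamma q/t_+}$, and you must take the worse one, $\gamma q/t_+$. For this to equal $Q/t_+$ you would need $\gamma=Q/q$, which your iteration does not obviously produce. The Jensen device sidesteps this entirely: the $Q$ comes purely from doubling, not from the iteration.
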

\begin{proof}
Assume $u\in DG_{H_0}^{+}(\Omega)$, the other case is handled similarly. For $k>0$ to be chosen later and any positive integer $n$, we set
$$
k_n=k(1-2^{-n})\nearrow k,\qquad\qquad R_n=(1+2^{-n})R/2\searrow R/2.
$$
and 

$$
\bar{R_n}=\frac{1}{2}(R_n+R_{n+1}),\qquad\qquad \rho_n=\frac{R/2}{2^{n+2}}=R_n-\bar{R_n}.
$$
We grab cut-off functions $\eta_n\in \textrm{Lip}_0(B_{\bar{R_n}})$ with compact support in $B_{\bar{R_n}}$, $\eta_n= 1$ on $B_{R_{n+1}}$ and $0\leq \eta_n\leq 1$, $g_{\eta_n}\leq\frac{2^{n+2}}{R/2}=\frac{1}{\rho_n}.$

First, we notice the following. By Lemma \ref{lemmafrozen1} and \eqref{frozeninduction1} we have

\begin{align*}
H_0\left(\frac{(u-k_{n+1})_+}{\rho_n}\right)&=H_0\left(\frac{2^{n+2}(u-k_{n+1})_+}{R/2}\right)\\
&\leq (2^{n+2})^{q}qH_0\left(\frac{(u-k_{n+1})_+}{R/2}\right)\\
&=(2^{n+2})^{q}qH_0\left(\frac{4(u-k_{n+1})_+}{4R/2}\right)\\
&\leq C2^{nq}H_0\left(\frac{(u-k_{n+1})_+}{4R/2}\right)\\
&\leq C2^{nq}H_0\left(\frac{(u-k_{n+1})_+}{\bar{R_n}}\right).
\end{align*}
Therefore, by H\"older's inequality, Theorem \ref{FSPineq0boundary}, the doubling property of the measure, Leibniz rule and the definition of $ DG_{H_0}^{+}(\Omega)$, we obtain

\begin{align*}
\int_{B_{R_{n+1}}}H_0\left(\frac{(u-k_{n+1})_+}{\rho_n}\right)\dd\mu&\leq C2^{nq} \int_{B_{R_{n+1}}}H_0\left(\frac{(u-k_{n+1})_+}{\bar{R_n}}\right)\dd\mu\\
&\leq  C2^{nq} \int_{B_{\bar{R_{n}}}}H_0\left(\frac{\eta_n(u-k_{n+1})_+}{\bar{R_n}}\right)\dd\mu\\
&\leq  C2^{nq} \mu(S_{k_{n+1},R_n})^{1-\frac{1}{d_1}}\left(\int_{S_{k_{n+1},R_n}}H_0\left(\frac{\eta_n(u-k_{n+1})_+}{\bar{R_n}}\right)^{d_1}\dd\mu\right)^{\frac{1}{d_1}}\\
&\leq  C2^{nq} \mu(S_{k_{n+1},R_n})^{1-\frac{1}{d_1}}\mu(B_{\bar{R_n}})^{\frac{1}{d_1}}\left(\dashint_{B_{\bar{R_n}}}H_0\left(\frac{\eta_n(u-k_{n+1})_+}{\bar{R_n}}\right)^{d_1}\dd\mu\right)^{\frac{1}{d_1}}\\
&\leq  C2^{nq} \mu(S_{k_{n+1},R_n})^{1-\frac{1}{d_1}}\mu(B_{\bar{R_n}})^{\frac{1}{d_1}}\left(\dashint_{B_{\bar{R_n}}}H_0\left(g_{\eta_n(u-k_{n+1})_+}\right)^{d_2}\dd\mu\right)^{\frac{1}{d_2}}\\
&\leq  C2^{nq} \left(\frac{\mu(S_{k_{n+1},R_n})}{\mu(B_{\bar{R_n}})}\right)^{1-\frac{1}{d_1}}\int_{B_{\bar{R_n}}}H_0\left(g_{\eta_n(u-k_{n+1})_+}\right)\dd\mu\\
&\leq C2^{nq} \left(\frac{\mu(S_{k_{n+1},R_n})}{\mu(B_{R})}\right)^{1-\frac{1}{d_1}}\Bigg(\int_{B_{\bar{R_n}}}H_0\left(g_{(u-k_{n+1})_+}\right)\dd\mu\\
&\qquad\qquad\qquad\qquad\qquad\qquad\qquad\qquad+\int_{B_{\bar{R_n}}}H_0\left(\frac{(u-k_{n+1})_+}{\rho_n}\right)\dd\mu\Bigg)\\
&\leq C2^{nq} \left(\frac{\mu(S_{k_{n+1},R_n})}{\mu(B_{R})}\right)^{1-\frac{1}{d_1}}\int_{B_{R_n}}H_0\left(\frac{(u-k_{n+1})_+}{\rho_n}\right)\dd\mu\\
&\leq C2^{nq} \left(\frac{\mu(S_{k_{n+1},R_n})}{\mu(B_{R})}\right)^{1-\frac{1}{d_1}}\int_{B_{R_n}}H_0\left(\frac{(u-k_{n})_+}{\rho_n}\right)\dd\mu.
\end{align*}
Since $H_0$ is increasing, by the definition of $\rho_n$, and \eqref{frozeninduction1}, we obtain
\begin{align*}
\int_{B_{R_{n+1}}}H_0\left(\frac{(u-k_{n+1})_+}{R/2}\right)\dd\mu&\leq\int_{B_{R_{n+1}}}H_0\left(\frac{2^{n+2}(u-k_{n+1})_+}{R/2}\right)\dd\mu\\
&\leq C2^{nq} \left(\frac{\mu(S_{k_{n+1},R_n})}{\mu(B_{R})}\right)^{1-\frac{1}{d_1}}\int_{B_{R_n}}H_0\left(\frac{2^{n+2}(u-k_{n})_+}{R/2}\right)\dd\mu\\
&\leq C2^{nq} \left(\frac{\mu(S_{k_{n+1},R_n})}{\mu(B_{R})}\right)^{1-\frac{1}{d_1}}(2^{n+2})^{q}q\int_{B_{R_n}}H_0\left(\frac{(u-k_{n})_+}{R/2}\right)\dd\mu\\
&\leq C2^{(q+1)n} \left(\frac{\mu(S_{k_{n+1},R_n})}{\mu(B_{R})}\right)^{1-\frac{1}{d_1}}\int_{B_{R_n}}H_0\left(\frac{(u-k_{n})_+}{R/2}\right)\dd\mu
\end{align*}
Then,

\begin{equation}\label{eq4frozen}
\int_{B_{R_{n+1}}}H_0\left(\frac{(u-k_{n+1})_+}{R/2}\right)\dd\mu\leq C2^{(q+1)n} \left(\frac{\mu(S_{k_{n+1},R_n})}{\mu(B_{R})}\right)^{1-\frac{1}{d_1}}\int_{B_{R_n}}H_0\left(\frac{(u-k_{n})_+}{R/2}\right)\dd\mu,
\end{equation}

where $C=C(C_D,C_{PI},\lambda,p,q,Q)$.

Now, we set 
$$
Y_n=H_0\left(\frac{k}{R}\right)^{-1}R^{-\frac{d_1}{d_1-1}}\int_{B_{R_n}}H_0\left(\frac{(u-k_{n})_+}{R/2}\right)\dd\mu.
$$
By Lemma \ref{lemmafrozen1}, we have the following inequality
$$
\frac{H_0\left(\frac{2k}{R/2}\right)}{H_0\left(\frac{k}{R}\right)}\geq\frac{2}{1/2}=4.
$$
Therefore,
\begin{equation}\label{eq5frozen}
4\mu(S_{k_{n+1},R_n})\leq \frac{H_0\left(\frac{2k}{R/2}\right)}{H_0\left(\frac{k}{R}\right)}\mu(S_{k_{n+1},R_n}).
\end{equation}
On the other hand, again by Lemma \ref{lemmafrozen1} and \eqref{frozeninduction2}, we have
\begin{align*}
H_0\left(\frac{2k}{R/2}\right)&\leq 4^{q}(2^{q})^{n}q H_0\left(\frac{2^{-n-1}k}{R/2}\right)\\
&= C2^{qn}H_0\left(\frac{k_{n+1}-k_n}{R/2}\right).
\end{align*}
Therefore,
\begin{align*}
H_0\left(\frac{2k}{R/2}\right)\mu(S_{k_{n+1},R_n})&\leq C2^{qn}H_0\left(\frac{k_{n+1}-k_n}{R/2}\right)\mu(S_{k_{n+1},R_n})\\
&\leq C2^{qn}\int_{S_{k_{n+1},R_n}}H_0\left(\frac{u-k_n}{R/2}\right)\dd\mu\\
&= C2^{qn}\int_{B_{R_n}}H_0\left(\frac{(u-k_n)_+}{R/2}\right)\dd\mu\\
&=C2^{qn}H_0\left(\frac{k}{R}\right)R^{\frac{d_1}{d_1-1}}Y_n
\end{align*}
So,
\begin{equation}\label{eq6frozen}
\frac{H_0\left(\frac{2k}{R/2}\right)}{H_0\left(\frac{k}{R}\right)}\mu(S_{k_{n+1},R_n})\leq C2^{qn}H_0\left(\frac{k}{R}\right)R^{\frac{d_1}{d_1-1}}Y_n
\end{equation}
By \eqref{eq5frozen} and \eqref{eq6frozen}, we have
$$
\mu(S_{k_{n+1},R_n})\leq C2^{qn}H_0\left(\frac{k}{R}\right)R^{\frac{d_1}{d_1-1}}Y_n.
$$
Furthermore, by \eqref{eq4frozen} and this last inequality, we deduce
\begin{align*}
H_0\left(\frac{k}{R}\right)R^{\frac{d_1}{d_1-1}}Y_{n+1}&\leq C\left(\frac{C2^{qn}Y_nR^{\frac{d_1}{d_1-1}}}{\mu(B_R)}\right)^{1-\frac{1}{d_1}}2^{(q+1)n}H_0\left(\frac{k}{R}\right)R^{\frac{d_1}{d_1-1}}Y_n\\
&\leq C 2^{(q+1)n}\mu(B_R)^{\frac{1}{d_1}-1}RH_0\left(\frac{k}{R}\right)R^{\frac{d_1}{d_1-1}}Y_n^{1+\frac{d_1-1}{d_1}}.
\end{align*}
Therefore,
$$
Y_{n+1}\leq \left(C\mu(B_R)^{\frac{1}{d_1}-1}R\right)(2^{q+1})^{n}Y_n^{1+\frac{d_1-1}{d_1}}.
$$
 By Lemma 7.1 in \cite{G}, we have that $Y_n\rightarrow 0$ provided
 $$
 Y_0\leq \left(C\mu(B_R)^{\frac{1}{d_1}-1}R\right)^{-\frac{d_1}{d_1-1}}(2^{q+1})^{-\left(\frac{d_1}{d_1-1}\right)^{2}}= C\mu(B_R)R^{-\frac{d_1}{d_1-1}},
 $$
where $C=C(C_{PI},C_D,p,q,Q)$.
Meaning, it is enough to choose $k$, such that
$$
H_0\left(\frac{k}{R}\right)\geq C\dashint_{B_R}H_0\left(\frac{(u-k_n)_+}{R/2}\right)\dd\mu.
$$
So, we have
$$
\lim_{n\rightarrow\infty}Y_n=0
$$
Then,
$$
\lim_{n\rightarrow\infty}\int_{B_{R_n}}H_0\left(\frac{(u-k_n)_+}{R/2}\right)\dd\mu=\int_{B_{R/2}}H_0\left(\frac{(u-k)_+}{R/2}\right)\dd\mu=0.
$$
Therefore, 
$$
\esssup_{B_{R/2}}\left(\frac{u_+}{R}\right)\leq \frac{k}{R}.
$$
Since $H_0$ is increasing, we then obtain
$$
H_0\left(\esssup_{B_{R/2}}\left(\frac{u_+}{R}\right)\right)\leq H_0\left(\frac{k}{R}\right)=C\dashint_{B_R}H_0\left(\frac{u_+}{R}\right)\dd\mu.
$$
We now define $G(t)=H_0(t^{\frac{1}{q}})$. Then by Lemma \ref{lemmafrozen1}
$$
G''(t)=\frac{1}{q}t^{\frac{1}{q}-2}h_0(t^{\frac{1}{q}})\left(\frac{1}{q}-1+\frac{1}{q}\frac{h_0'(t^{\frac{1}{q}})t^{\frac{1}{q}}}{h_0(t^{\frac{1}{q}})}\right)\leq 0
$$
Hence, Jensen's inequality implies that
$$
\dashint_{B_R}H_0\left(\frac{u_+}{R}\right)\dd\mu\leq H_0\left(\left(\dashint_{B_R}\left(\frac{u_+}{R}\right)^{q}\dd\mu\right)^{\frac{1}{q}}\right).
$$
So,
$$
H_0\left(\esssup_{B_{R/2}}\left(\frac{u_+}{R}\right)\right)\leq C H_0\left(\left(\dashint_{B_R}\left(\frac{u_+}{R}\right)^{q}\dd\mu\right)^{\frac{1}{q}}\right).
$$
Since $H_0$ is increasing, we can conclude
\begin{equation}\label{eq8frozen}
\esssup_{B_{R/2}}u_+\leq C \left(\dashint_{B_R}u_+^{q}\dd\mu\right)^{\frac{1}{q}},
\end{equation}

where $C=C(C_{PI},C_D,p,q,Q).$
Now, it is easy to see that there is nothing particular in the factor $1/2$ in the radius of the last inequality. Indeed, if $0<\rho<r\leq R$, then
\begin{equation}\label{eq9frozen}
\esssup_{B_{\rho}}u_+\leq \frac{C}{(1-\frac{\rho}{r})^{\frac{Q}{q}}} \left(\dashint_{B_r}u_+^{q}\dd\mu\right)^{\frac{1}{q}}.
\end{equation}
To see this, let $\varepsilon>0$ and take $y\in B_{\rho}$ so that $u_+(y)^{q}\geq\left(\esssup_{B_{\rho}} u_+\right)^{q}-\varepsilon$.
Then, by \eqref{eq8frozen}
\begin{align*}
\left(\esssup_{B_{\rho}} u_+\right)^{q}&\leq u_+(y)^{q}+\varepsilon\leq\left(\esssup_{B_{\rho}} u_+\right)^{q}+\varepsilon\\
&\leq C\left(\dashint_{B_{\frac{r-\rho}{2}}}u_+^{q}\dd\mu\right).
\end{align*}
The doubling property of the measure $\mu$ implies that
$$
\mu\left(B_{\frac{r-\rho}{2}}\right)\geq C\left(1-\frac{\rho}{r}\right)^{Q}\mu(B_r),
$$
from which the claim follows.

Finally, we are left to see that \eqref{eq9frozen} holds for every exponent $t_+>0$. To be more precise, for $t_+>0$ there is a constant $C$ such that

$$
\esssup_{B_{\rho}} u_+\leq \frac{C}{(1-\frac{\rho}{R})^{\frac{Q}{t_+}}}\left(\dashint_{B_R}u^{t_+}\dd\mu\right)^{\frac{1}{t_+}},
$$
when $0<\rho<R<\infty$. If $t_+>q$, the claim follows directly from H\"older's inequality. Suppose that $0<t_+<q$, and let $0<\rho<r\leq R$. Then an application of Young's inequality and the doubling property of the measure gives

\begin{align*}
\esssup_{B_{\rho}} u_+&\leq \frac{C}{(1-\frac{\rho}{R})^{\frac{Q}{q}}}\left(\dashint_{B_r}(u^{t_+})(u_+^{q-t_+})\dd\mu\right)^{\frac{1}{q}}\\
&\leq \frac{C}{(1-\frac{\rho}{R})^{\frac{Q}{q}}}\left(\dashint_{B_r}u^{t_+}\dd\mu\right)^{\frac{1}{q}}\left(\esssup_{B_{r}} u_+\right)^{1-\frac{t_+}{q}}\\
&\leq \frac{t_+}{q}\frac{C}{(1-\frac{\rho}{R})^{\frac{Q}{t_+}}}\left(\dashint_{B_r}u^{t_+}\dd\mu\right)^{\frac{1}{t_+}}+\left(\frac{q-t_+}{q}\right)\esssup_{B_{r}} u_+\\
&=\varepsilon\esssup_{B_{r}} u_+\frac{C}{(1-\frac{\rho}{R})^{\frac{Q}{t_+}}}\left(\dashint_{B_r}u^{t_+}\dd\mu\right)^{\frac{1}{t_+}}\\
&\leq\varepsilon\esssup_{B_{r}} u_+\frac{C}{(r-\rho)^{\frac{Q}{t_+}}}\left(R^{Q}\dashint_{B_R}u^{t_+}\dd\mu\right)^{\frac{1}{t_+}},
\end{align*}
with $C=C(C_{PI},C_D,p,q,t_+,Q).$ Then, by Lemma 3.2 of \cite{KS}, there is a constant $C=C(q,Q,t_+)$, such that 
$$
\esssup_{B_{\rho}} u_+\leq \frac{C}{(1-\frac{\rho}{R})^{\frac{Q}{t_+}}}\left(\dashint_{B_R}u^{t_+}\dd\mu\right)^{\frac{1}{t_+}},
$$
as wanted.

\end{proof}

\begin{lemma}\label{lemma3frozen}
Let $B_{6\lambda R}\Subset \Omega$ be a ball, such that $0<R\leq\min\lbrace 1, \frac{\diam (X)}{12\lambda}\rbrace$. Assume $u\in DG_{H_0}^{-}(\Omega)$ is non negative. Fix $\delta\in (0,1)$ and suppose that
\begin{equation}\label{eq10frozen}
\frac{\mu(B_R\cap\lbrace u>l\rbrace)}{\mu(B_R)}\geq \delta
\end{equation} 
holds for some positive level $l>0$. Then there exist an exponent $\xi=\xi(C_D,p,q)>0$ and a constant $C=C(C_{PI},C_D,\lambda,\delta,q)$, such that
\begin{equation}\label{eq11frozen}
\frac{\mu(B_R\cap\lbrace u\leq 2^{-\hat{j}}l\rbrace)}{\mu(B_R)}\leq C\hat{j}^{-\xi}
\end{equation}
holds for any $\hat{j}\in\mathbb{N}$.
\end{lemma}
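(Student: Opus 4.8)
The statement is a De Giorgi-type ``measure-to-measure'' propagation result: starting from the fact that $u$ is large (above level $l$) on a definite fraction $\delta$ of $B_R$, we want to conclude that the set where $u$ is very small (below $2^{-\hat j}l$) shrinks like $\hat j^{-\xi}$. The natural route is to apply the already-established Lemma \ref{lemma5frozen}-type machinery to the function $-u$, or more precisely to run the analogue of Lemma \ref{lemma6frozen} ``from below''. Indeed, $u \in DG_{H_0}^-(\Omega)$ means precisely that the De Giorgi inequalities hold for the negative truncations of $u$, equivalently for the positive truncations of $-u$, so all the estimates proved for $DG_{H_0}^+$ applied to $-u$ are at our disposal.

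The plan is as follows. First, I would set up the dyadic levels $l_j = 2^{-j}l$ for $j = 0,1,\dots$, so that $l_0 = l$ and $l_j \searrow 0$, and introduce the truncated functions $v_j = \min\{u, l_{j-1}\} - \min\{u, l_j\}$, which equal $l_{j-1}-l_j = 2^{-j}l$ where $u \le l_j$, equal $u - l_j$ where $l_j < u < l_{j-1}$, and vanish where $u \ge l_{j-1}$. The density hypothesis \eqref{eq10frozen} guarantees that $\mu(\{x\in B_R : u > l\}) \ge \delta\mu(B_R)$, hence $\mu(\{x \in B_R : v_1 > 0\}) \le (1-\delta)\mu(B_R)$, and more generally $\mu(\{x\in B_R: v_j>0\})\le \mu(\{x\in B_R: u < l_{j-1}\}) \le \mu(\{x\in B_R: u\le l\})\le(1-\delta)\mu(B_R)$ once we know $l_{j-1}\le l$, i.e. for all $j\ge 1$. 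Thus $v_j$ vanishes on a set of definite proportion, and Theorem \ref{FSPineq0largeSet} (applied with $\gamma = 1-\delta$) together with H\"older's inequality and the $DG_{H_0}^-$ inequality for $u$ (i.e. the Caccioppoli inequality \eqref{eq3frozen} for $(u-l_j)_-$) yields, exactly as in the proof of Lemma \ref{lemma5frozen},
\begin{align*}
H_0\!\left(\frac{l_j}{R}\right)\mu\big(B_R\cap\{u\le l_j\}\big)
&\le C\left(\frac{\mu(B_{\lambda R}\cap\{l_j<u\})-\mu(B_{\lambda R}\cap\{l_{j-1}<u\})}{\mu(B_R)}\right)^{\xi}
\int_{B_{2\lambda R}\cap\{u<l_{j-1}\}} H_0\!\left(\frac{l_{j-1}}{R}\right)\dd\mu,
\end{align*}
where the right-hand integrand has been bounded using $(u-l_j)_- \le l_{j-1}$ on the relevant set, and $\xi = \frac{1}{d_2}-1 = \xi(C_D,p,q)$. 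Using Lemma \ref{lemmafrozen1}, $H_0(l_{j-1}/R) = H_0(2\,l_j/R) \le 2^{q-1}H_0(l_j/R)$, so the factor $H_0$ on both sides cancels up to a constant, and the doubling property controls $\mu(B_{2\lambda R})/\mu(B_R)$; this gives the clean recursion
\[
\frac{\mu(B_R\cap\{u\le l_j\})}{\mu(B_R)}
\le C\left(\frac{\mu(B_{\lambda R}\cap\{l_j<u\})-\mu(B_{\lambda R}\cap\{l_{j-1}<u\})}{\mu(B_R)}\right)^{\xi}.
\]

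The final step is the summation argument, identical in spirit to the end of Lemma \ref{lemma6frozen}. Fix $\hat j\in\mathbb N$. For $j\le\hat j$ we have $\mu(B_R\cap\{u\le l_{\hat j}\})\le\mu(B_R\cap\{u\le l_j\})$ since $l_{\hat j}\le l_j$, so raising the recursion to the power $1/\xi$ gives
\[
\left(\frac{\mu(B_R\cap\{u\le l_{\hat j}\})}{\mu(B_R)}\right)^{1/\xi}
\le C\,\frac{\mu(B_{\lambda R}\cap\{l_j<u\})-\mu(B_{\lambda R}\cap\{l_{j-1}<u\})}{\mu(B_R)}.
\]
Summing over $j=1,\dots,\hat j$, the right-hand side telescopes and is bounded by $C\,\mu(B_{\lambda R})/\mu(B_R)\le C$ by the doubling property, whence
\[
\hat j\left(\frac{\mu(B_R\cap\{u\le 2^{-\hat j}l\})}{\mu(B_R)}\right)^{1/\xi}\le C,
\]
which rearranges to \eqref{eq11frozen}. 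The main obstacle — and the only place requiring genuine care rather than bookkeeping — is the very first reduction: one must check that applying the $DG_{H_0}^+$ results to $-u$ is legitimate here, i.e. that ``$u\in DG_{H_0}^-(\Omega)$ and $u\ge 0$'' supplies exactly the Caccioppoli/Sobolev input needed for the truncations $(u-l_j)_-$, and that the non-negativity of $u$ (hence $l_j>0$) is what makes the levels stay in the range where Lemma \ref{lemmafrozen1} can be invoked; once that is in place the argument is a verbatim transcription of the proof of Lemma \ref{lemma6frozen}.
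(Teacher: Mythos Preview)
Your approach is essentially identical to the paper's: the same truncation between dyadic levels, the same invocation of Theorem~\ref{FSPineq0largeSet} with $\gamma=1-\delta$, the same passage through H\"older and the $DG_{H_0}^-$ Caccioppoli inequality, the same cancellation of the $H_0$ factors via the quasi-doubling property of $H_0$, and the same telescoping sum at the end. Two small slips to correct: first, with your definition $v_j=\min\{u,l_{j-1}\}-\min\{u,l_j\}$ the function actually \emph{vanishes} where $u\le l_j$ and equals $l_{j-1}-l_j$ where $u\ge l_{j-1}$, the reverse of what you state --- you want $\max$ in place of $\min$ (the paper uses $v_j=\max\{u,2^{-j}l\}-\max\{u,2^{-(j+1)}l\}$); second, the Caccioppoli step should be applied at the upper level $l_{j-1}$, so that $g_{v_j}\le g_{(u-l_{j-1})_-}$ and $(u-l_{j-1})_-\le l_{j-1}$ by nonnegativity.
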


\begin{proof}
We define
\begin{equation}\label{eq12frozen}
v_j=\max\lbrace u, 2^{-j}l\rbrace-\max\lbrace u, 2^{-(j+1)}l\rbrace.
\end{equation}

Note that, by \eqref{eq12frozen}, we have
\begin{equation} \label{eq13frozen}
	v=\begin{cases}
		2^{-j}l-2^{-(j+1)}l=2^{-(j+1)}l, \hspace{2.15cm} \mbox{if $u\leq 2^{-(j+1)}l<2^{-j}l$,}\\
		2^{-j}l-u, \hspace{4.9cm} \mbox{if $2^{-(j+1)}l<u<2^{-j}l$,}\\
		0, \hspace{6cm} \mbox{if $u\geq 2^{-j}l>2^{-(j+1)}l$.}
	\end{cases}
\end{equation}
From \eqref{eq10frozen} and \eqref{eq13frozen}, we deduce that

\begin{align*}
\mu(\lbrace x\in B_R:v_j(x)>0\rbrace)&\leq \mu(\lbrace x\in B_R:u(x)\leq 2^{-j}l\rbrace)\\
&\leq \mu(\lbrace x\in B_R:u(x)\leq l\rbrace)\\
&\leq \mu(B_R)-\delta\mu(B_R)\\
&=(1-\delta)\mu(B_R),
\end{align*}
where, $0<1-\delta<1$.

Therefore, by Theorem \ref{FSPineq0largeSet}, H\"older's inequality and the definition of $DG_{H_0}^{-}(\Omega)$, we obtain
\begin{align*}
H_0\left(\frac{2^{-(j+1)}l}{R}\right)\mu(B_R&\cap\lbrace u\leq 2^{-(j+1)}l\rbrace)=\int_{B_R\cap\lbrace u\leq 2^{-(j+1)}l\rbrace}H_0\left(\frac{v_j}{R}\right)\dd\mu\leq \int_{B_R}H_0\left(\frac{v_j}{R}\right)\dd\mu\\
&\leq\mu(B_R)\left(\dashint_{B_R}H_0\left(\frac{v_j}{R}\right)^{d_1}\dd\mu\right)^{\frac{1}{d_1}}\\
&\leq C\mu(B_R)\left(\dashint_{B_{\lambda R}}H_0\left(g_{v_j}\right)^{d_2}\dd\mu\right)^{\frac{1}{d_2}}\\
&\leq C\mu(B_R)^{1-\frac{1}{d_2}}\left(\int_{B_{\lambda R}\cap\lbrace 2^{-(j+1)}l<u\leq 2^{-j}l\rbrace}H_0\left(g_{v_j}\right)^{d_2}\dd\mu\right)^{\frac{1}{d_2}}\\
&\leq C\left(\frac{\mu(B_{\lambda R}\cap\lbrace 2^{-(j+1)}l<u\leq 2^{-j}l\rbrace)}{\mu(B_R)}\right)^{\frac{1}{d_2}-1}\\
&\qquad\qquad\qquad\qquad\qquad\qquad\int_{B_{\lambda R}\cap\lbrace 2^{-(j+1)}l<u\leq 2^{-j}l\rbrace}H_0\left(g_{v_j}\right)\dd\mu\\
&\leq C\left(\frac{\mu(B_{\lambda R}\cap\lbrace 2^{-(j+1)}l<u\leq 2^{-j}l\rbrace)}{\mu(B_R)}\right)^{\frac{1}{d_2}-1}\int_{B_{\lambda R}}H_0\left(g_{(u-2^{-j}l)_-}\right)\dd\mu\\
&\leq C\left(\frac{\mu(B_{\lambda R}\cap\lbrace 2^{-(j+1)}l<u\leq 2^{-j}l\rbrace)}{\mu(B_R)}\right)^{\frac{1}{d_2}-1}\int_{B_{2\lambda R}}H_0\left(\frac{(u-2^{-j}l)_-}{\lambda R}\right)\dd\mu\\
&\leq C\left(\frac{\mu(B_{\lambda R}\cap\lbrace 2^{-(j+1)}l<u\leq 2^{-j}l\rbrace)}{\mu(B_R)}\right)^{\frac{1}{d_2}-1}\int_{B_{2\lambda R}\cap\lbrace u\leq 2^{-j}l\rbrace}H_0\left(\frac{2^{-j}l}{R}\right)\dd\mu\\
&\leq C\left(\frac{\mu(B_{\lambda R}\cap\lbrace 2^{-(j+1)}l<u\leq 2^{-j}l\rbrace)}{\mu(B_R)}\right)^{\frac{1}{d_2}-1}\mu(B_R)H_0\left(\frac{2^{-j}l}{R}\right).
\end{align*}
Therefore,
$$
\left(\frac{\mu(B_{R}\cap\lbrace u\leq 2^{-(j+1)}l\rbrace)}{\mu(B_R)}\right)\leq C\left(\frac{\mu(B_{\lambda R}\cap\lbrace 2^{-(j+1)}l<u\leq 2^{-j}l\rbrace)}{\mu(B_R)}\right)^{\frac{1}{d_2}-1}.
$$
If $\hat{j}>j$, then $\mu(B_{R}\cap\lbrace u\leq 2^{-\hat{j}}l\rbrace)\leq\mu(B_{R}\cap\lbrace u\leq 2^{-(j+1)}l\rbrace)$. Therefore,
$$
\left(\frac{\mu(B_{R}\cap\lbrace u\leq 2^{-\hat{j}}l\rbrace)}{\mu(B_R)}\right)^{\frac{d_2}{1-d_2}}\leq C\left(\frac{\mu(B_{\lambda R}\cap\lbrace 2^{-(j+1)}l<u\leq 2^{-j}l\rbrace)}{\mu(B_R)}\right).
$$
Summing up for $j=0,1,...,\hat{j}-1$, $\hat{j}\in\mathbb{N}$, and the doubling property of the measure, gives
\begin{align*}
\hat{j}\left(\frac{\mu(B_{R}\cap\lbrace u\leq 2^{-\hat{j}}l\rbrace)}{\mu(B_R)}\right)^{\frac{d_2}{1-d_2}}&\leq C\mu(B_R)^{-1}\sum_{j=0}^{\hat{j}-1}\mu(B_{\lambda R}\cap\lbrace 2^{-(j+1)}l<u\leq 2^{-j}l\rbrace)\\
&\leq  C\mu(B_R)^{-1}\mu(B_{\lambda R})\leq C.
\end{align*}
Therefore,
$$
\frac{\mu(B_{R}\cap\lbrace u\leq 2^{-\hat{j}}l\rbrace)}{\mu(B_R)}\leq C\hat{j}^{-\left(\frac{1-d_2}{d_2}\right)}=C\hat{j}^{-\xi},
$$
where $\xi=\xi(C_D,p,q)>0$.
\end{proof}

\begin{lemma}\label{lemma4frozen} Let $B_{6\lambda R}\Subset \Omega$ be a ball, such that $0<R\leq\min\lbrace 1, \frac{\diam (X)}{12\lambda}\rbrace$. Assume $u,R,\delta$ and $l$ are as in Lemma \ref{lemma3frozen}. If \eqref{eq10frozen} holds, then
\begin{equation}\label{eq14frozen}
\essinf_{B_R} u\geq \frac{l}{C_{1,\delta}},
\end{equation}
for a constant $C_{1,\delta}=C_{1,\delta}(C_{PI},C_D,p,q,\lambda,\delta)$.
\end{lemma}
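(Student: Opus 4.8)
The plan is to run a De Giorgi iteration on the small sub‑level sets of $u$, using the density decay of Lemma \ref{lemma3frozen} to force the starting quantity of the iteration below the critical threshold. As a preliminary step I would transfer the density hypothesis \eqref{eq10frozen} from $B_R$ to $B_{2R}$: since
$\mu(B_{2R}\cap\{u>l\})\geq\mu(B_R\cap\{u>l\})\geq\delta\,\mu(B_R)\geq\delta\,C_D^{-1}\mu(B_{2R})$,
hypothesis \eqref{eq10frozen} holds on $B_{2R}$ with $\delta/C_D$ in place of $\delta$, the geometric requirements ($B_{4\lambda R}\Subset\Omega$ and $2R<\diam(X)/4$) being guaranteed by $B_{6\lambda R}\Subset\Omega$, $\lambda\geq 1$ and $R\leq\min\{1,\diam(X)/(12\lambda)\}$. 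Applying Lemma \ref{lemma3frozen} at scale $2R$ then yields an exponent $\xi=\xi(C_D,p,q)>0$ and a constant $C$, depending only on $\data$ and $\delta$, with
\[
\frac{\mu\big(B_{2R}\cap\{u\leq 2^{-\hat{j}}l\}\big)}{\mu(B_{2R})}\leq C\,\hat{j}^{-\xi}\qquad\text{for every }\hat{j}\in\mathbb{N}.
\]

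For the iteration, fix $\hat{j}$ (to be chosen) and set $\rho_n=R(1+2^{-n})\searrow R$, $\bar\rho_n=\tfrac12(\rho_n+\rho_{n+1})$, $\ell_n=2^{-\hat{j}-1}l(1+2^{-n})\searrow 2^{-\hat{j}-1}l$, Lipschitz cut‑offs $\eta_n$ with $\eta_n\equiv 1$ on $B_{\rho_{n+1}}$, $\mathrm{supp}\,\eta_n\subset B_{\bar\rho_n}$, $g_{\eta_n}\leq C2^n/R$, and $v_n=\eta_n(u-\ell_n)_-$. Since $\ell_n\leq 2^{-\hat{j}}l$ and $\rho_n\leq 2R$, one has $B_{\rho_n}\cap\{u<\ell_n\}\subset B_{2R}\cap\{u\leq 2^{-\hat{j}}l\}$, so $Y_n:=\mu(B_{\rho_n}\cap\{u<\ell_n\})/\mu(B_{2R})\leq C\hat{j}^{-\xi}$ for every $n$. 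Testing the frozen Caccioppoli inequality \eqref{eq3frozen} for $(u-\ell_n)_-$ on $B_{\bar\rho_n}\subset B_{\rho_n}$, then the frozen Sobolev–Poincaré inequality for zero boundary values (Theorem \ref{FSPineq0boundary}) applied to $v_n\in N^{1,1}_0(B_{\bar\rho_n})$, and finally Hölder's inequality — exactly as in the proofs of Lemma \ref{lemma7frozen} and Theorem \ref{upperharnackineqfrozen}, with the non‑homogeneity of $H_0$ absorbed through Lemma \ref{lemmafrozen1} and \eqref{frozeninduction1}--\eqref{frozeninduction2} to move the dyadic factors $2^n$ in and out of $H_0$ — one estimates $H_0\big(\tfrac{\ell_n-\ell_{n+1}}{R}\big)\mu(B_{\rho_{n+1}}\cap\{u<\ell_{n+1}\})$ from above and, dividing, obtains a recursion
\[
Y_{n+1}\leq C\,b^{\,n}\,Y_n^{1+\beta},
\]
with $b>1$ and $\beta>0$ depending only on $\data$.

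To conclude, by the fast‑convergence lemma (Lemma 7.1 in \cite{G}) one has $Y_n\to 0$ as soon as $Y_0\leq C\hat{j}^{-\xi}$ lies below the threshold of that lemma, which depends only on $\data$ (through $C,b,\beta$) and on $\delta$; this is achieved by taking $\hat{j}=\hat{j}_0(\data,\delta)$ large enough. Then $\mu(B_R\cap\{u<2^{-\hat{j}_0-1}l\})=\big(\lim_n Y_n\big)\mu(B_{2R})=0$, i.e. $\essinf_{B_R}u\geq 2^{-\hat{j}_0-1}l$, which is \eqref{eq14frozen} with $C_{1,\delta}=2^{\hat{j}_0+1}$. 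The main obstacle is the bookkeeping forced by $H_0$ not being $1$‑homogeneous: dyadic factors cannot be pulled through $H_0(\cdot)$ for free and must be routed through Lemma \ref{lemmafrozen1} and \eqref{frozeninduction1}--\eqref{frozeninduction2} at every step of the iteration, as in Lemma \ref{lemma7frozen}; checking the geometric constraints and that $Y_n$ stays below the relevant thresholds along the whole iteration is routine once the preliminary step is in place.
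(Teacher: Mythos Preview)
Your approach is correct and is essentially the same De Giorgi iteration as the paper's: feed the density decay from Lemma~\ref{lemma3frozen} into a sublevel-set iteration driven by the frozen Caccioppoli and Sobolev--Poincar\'e inequalities, then take $\hat{j}$ large enough for the starting quantity to fall below the threshold of Giusti's convergence lemma. The only differences are cosmetic: you iterate from $B_{2R}$ down to $B_R$ (so the conclusion lands on $B_R$ as stated; the paper's own proof, run from $B_R$ down to $B_{R/2}$, actually delivers $\essinf_{B_{R/2}}u\geq l/C_{1,\delta}$), and you invoke Theorem~\ref{FSPineq0boundary} in the style of Lemma~\ref{lemma7frozen}, whereas the paper uses Theorem~\ref{FSPineq0largeSet} and therefore first needs $C\hat{j}^{-\xi}<\tfrac12$ to guarantee the large-set hypothesis.
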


\begin{proof}
Let $\hat{j}\in\mathbb{N}$ to be determined, and for $i\in\mathbb{N}$, we consider the radii and the levels
$$
r_i=R\left(\frac{1}{2}+\frac{1}{2^{i+1}}\right)\searrow\frac{R}{2},\qquad \tilde{r_i}=\frac{r_{i+1}+r_i}{2},\qquad k_i=\left(\frac{1}{2}+\frac{1}{2^{i+1}}\right)2^{-\hat{j}}l\searrow 2^{-\hat{j}-1}l
$$
By the definition of $DG_{H_0}^{-}(\Omega)$ and \eqref{frozeninduction1}, we have
\begin{align}\label{eq15frozen}
\int_{B_{\tilde{r_i}}}H_0\left(g_{(u-k_i)_-}\right)\dd\mu&\leq C\int_{B_{r_i}}H_0\left(\frac{(u-k_i)_-}{r_i-\tilde{r_i}}\right)\dd\mu\nonumber\\
&\leq C2^{qi}\int_{B_{r_i}}H_0\left(\frac{(u-k_i)_-}{R}\right)\dd\mu\nonumber\\
&\leq C2^{qi}\int_{B_{r_i}}H_0\left(\frac{(u-k_i)_-}{r_i}\right)\dd\mu
\end{align}

Now, we take Lipschitz cut-off functions $\eta_i\in \textrm{Lip}_0(B_{\tilde{r_i}})$, $0\leq \eta_i\leq 1$, such that $\eta_i= 1$ on $B_{\tilde{r_i}}$ and $g_{\eta_i}\leq\frac{2^{i}}{R}$.

On the other hand, by Lemma \ref{lemma3frozen}, and the doubling property of the measure, we get
\begin{align*}
\mu(\lbrace x\in B_{r_{i+1}}: \eta_i(u-k_i)_->0\rbrace)&\leq \mu(\lbrace x\in B_{r_{i+1}}: u\leq 2^{-\hat{j}}l\rbrace)\\
&\leq C\mu(B_R)\hat{j}^{-\xi}\leq C\mu(B_{R/2})\hat{j}^{-\xi}\\
&\leq C\mu(B_{r_{i+1}})\hat{j}^{-\xi}.
\end{align*}
Therefore, we focus on $\hat{j}\in\mathbb{N}$ big enough so that $C\hat{j}^{-\xi}<\frac{1}{2}$. By Theorem \ref{FSPineq0largeSet}, H\"older's inequality, Leibniz' rule, the doubling property of the measure, \eqref{frozeninduction1} and \eqref{eq15frozen}, we deduce

\begin{align*}
\left(\dashint_{B_{r_{i+1}}}H_0\left(\frac{(u-k_i)_-}{r_{i+1}}\right)^{d_1}\dd\mu\right)^{\frac{1}{d1}}&=\left(\dashint_{B_{r_{i+1}}}H_0\left(\frac{\eta_i(u-k_i)_-}{r_{i+1}}\right)^{d_1}\dd\mu\right)^{\frac{1}{d1}}\\
&\leq C\left(\dashint_{B_{\lambda r_{i+1}}}H_0\left(g_{\eta_i(u-k_i)_-}\right)^{d_2}\dd\mu\right)^{\frac{1}{d2}}\\
&\leq C\dashint_{B_{\lambda r_{i+1}}}H_0\left(g_{\eta_i(u-k_i)_-}\right)\dd\mu\\
&\leq C\mu(B_{\lambda r_{i+1}})^{-1}\Bigg(\int_{B_{\tilde{r_i}}}H_0\left(g_{(u-k_i)_-}\right)\dd\mu\\
&\qquad\qquad\qquad\qquad\qquad+\int_{B_{\tilde{r_i}}}H_0\left(\frac{2^{i}(u-k_i)_-}{R}\right)\dd\mu\Bigg)\\
&\leq C\mu(B_{\lambda r_{i+1}})^{-1}2^{qi}\int_{B_{r_i}}H_0\left(\frac{(u-k_i)_-}{r_i}\right)\dd\mu\\
&\leq C2^{qi}\dashint_{B_{r_i}}H_0\left(\frac{(u-k_i)_-}{r_i}\right)\dd\mu.
\end{align*}

Therefore, 
\begin{equation}\label{eq16frozen}
\left(\dashint_{B_{r_{i+1}}}H_0\left(\frac{(u-k_i)_-}{r_{i+1}}\right)^{d_1}\dd\mu\right)^{\frac{1}{d1}}\leq C2^{qi}\dashint_{B_{r_i}}H_0\left(\frac{(u-k_i)_-}{r_i}\right)\dd\mu,
\end{equation}
where $C=C(C_{PI},C_D,q,Q,\lambda,\delta).$
We now estimate from above the right-hand side of \eqref{eq16frozen}. Since $u$ is non-negative, then $(u-k_i)_-\leq\chi_{\lbrace u\leq k_i\rbrace} 2^{-\hat{j}}l$ in $B_R$. On the other hand, in order to estimate from below the left-hand side of \eqref{eq16frozen}, we use

$$
\chi_{\lbrace u\leq k_{i+1}\rbrace} 2^{-\hat{j}-i-2}l=\chi_{\lbrace u\leq k_{i+1}\rbrace}(k_i-k_{i+1})\leq \chi_{\lbrace u\leq k_{i+1}\rbrace}(u-k_i)_-\leq (u-k_i)_.
$$
Therefore, the last three displays and \eqref{frozeninduction2}, yield
\begin{align*}
C2^{-iq}H_0\left(\frac{2^{-\hat{j}}l}{r_{i}}\right)\left(\dashint_{B_{r_{i+1}}}\chi_{\lbrace u\leq k_{i+1}\rbrace}\dd\mu\right)^{\frac{1}{d_1}}&\leq H_0\left(\frac{2^{-\hat{j}-i-2}l}{r_{i}}\right)\left(\dashint_{B_{r_{i+1}}}\chi_{\lbrace u\leq k_{i+1}\rbrace}\dd\mu\right)^{\frac{1}{d_1}}\\
&\leq C2^{iq}H_0\left(\frac{2^{-\hat{j}}l}{r_{i}}\right)\dashint_{B_{r_{i}}}\chi_{\lbrace u\leq k_{i}\rbrace}\dd\mu
\end{align*}
So,
$$
\dashint_{B_{r_{i+1}}}\chi_{\lbrace u\leq k_{i+1}\rbrace}\dd\mu\leq C\left(2^{(q+1)d_1}\right)^i\left(\dashint_{B_{r_{i}}}\chi_{\lbrace u\leq k_{i}\rbrace}\dd\mu\right)^{1+(d_1-1)}.
$$
Therefore, by defining $A_i=\dashint_{B_{r_{i}}}\chi_{\lbrace u\leq k_{i}\rbrace}\dd\mu$, we obtain
$$
A_{i+1}\leq C\left(2^{(q+1)d_1}\right)^iA_i^{1+(d_1-1)},
$$
with $C=C(C_{PI},C_D,p,q,Q,\lambda,\delta)$.
 By Lemma 7.1 in \cite{G}, we have that $A_i\rightarrow 0$ when $i\rightarrow\infty$ provided
 
$$
A_0\leq C^{-\frac{1}{d_1-1}}(2^{(q+1)d_1})^{-\left(\frac{1}{d_1-1}\right)^{2}}=\tilde{C}^{-1}=\tilde{C}^{-1}(C_{PI},C_D,p,q,Q,\lambda,\delta).
$$ 
Meaning, that if 
$$
\frac{\mu(\lbrace x\in B_R: u\leq 2^{-\hat{j}}\rbrace)}{\mu(B_R)}\leq \tilde{C}^{-1},
$$
then, 
$$
\dashint_{B_{R/2}}\chi_{\lbrace u\leq 2^{-\hat{j}-1}\rbrace}\dd\mu=0
$$
Therefore, $\essinf_{B_{R/2}} u\geq\frac{l}{2^{\hat{j}+1}}$ as we want.

By Lemma \ref{lemma3frozen}, we have
$$
\frac{\mu(B_{R}\cap\lbrace u\leq 2^{-\hat{j}}l\rbrace)}{\mu(B_R)}\leq C\hat{j}^{-\xi},
$$
for all $\hat{j}\in\mathbb{N}$ and for some $\xi=\xi(p,q,C_D)>0$ and $C=C(C_{PI},C_D,q,\lambda,\delta)$.

By asking $\hat{j}$ to be large enough so that $C\hat{j}^{-\xi}\leq\min\lbrace\frac{1}{2},\tilde{C}^{-1}\rbrace$, we obtain
$$
\essinf_{B_{R/2}} u\geq\frac{l}{2^{\hat{j}+1}}=\frac{l}{C_{1,\delta}},
$$

where $C_{1,\delta}=C_{1,\delta}(C_{PI},C_D,p,q,\lambda,\delta)$.

\end{proof}
Lastly, the following weak-Harnack inequality for frozen functionals follows from Lemma \ref{lemma4frozen} exactly as in proof of Theorem 7.1 of \cite{KS}.

\begin{theorem}\label{essinfharnackfrozen}Let $B_{6\lambda R}\Subset \Omega$ be a ball, such that $0<R\leq\min\lbrace 1, \frac{\diam (X)}{12\lambda}\rbrace$. Assume $u\in DG_{H_0}^{-}(\Omega)$ is non negative. Then, there exist a positive exponent $\sigma$ and a positive constant $C$ both independent of the ball $B_R$ and of $a_0$, such that

$$
\inf_{B_{3R}} u\geq C\left(\dashint_{B_R}u^{\sigma}\dd\mu\right)^{\frac{1}{\sigma}}.
$$
\end{theorem}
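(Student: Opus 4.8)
The plan is to deduce the statement from Lemma \ref{lemma4frozen} by the classical De Giorgi--Moser chain argument for the weak Harnack inequality, following the scheme in the proof of \cite[Theorem 7.1]{KS}. The only feature requiring care here is that every constant produced must be independent of $a_0$; this is guaranteed by the properties of $H_0$ collected in Lemma \ref{lemmafrozen1} (in particular the almost-homogeneity $\frac{t h_0(t)}{q}\le H_0(t)\le t h_0(t)$ and the doubling-type bound $h_0(2t)\le 2^{q-1}h_0(t)$, which were already invoked for precisely this purpose in the proofs of Lemmas \ref{lemma3frozen} and \ref{lemma4frozen}).

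First I would dispose of the degenerate case. Since $u\ge 0$ and $u\in DG_{H_0}^{-}(\Omega)$, one checks directly from \eqref{eq3frozen} that $u+\varepsilon\in DG_{H_0}^{-}(\Omega)$ for every $\varepsilon>0$: for a level $k<\varepsilon$ the function $(u+\varepsilon-k)_-$ vanishes identically because $u\ge 0$, while for $k\ge\varepsilon$ the inequality for $u+\varepsilon$ reduces to the one for $u$ at the level $k-\varepsilon\ge 0$. Hence it suffices to argue when $\essinf_{B_{3R}}u>0$; the case $\essinf_{B_{3R}}u=0$ is handled by observing, via Lemma \ref{lemma4frozen} applied on $B_R$ with a $\delta\in(0,1)$ smaller than the density of $\{u>0\}$ in $B_R$, that $\essinf_{B_{R}}u=0$ forces $u=0$ $\mu$-a.e.\ in $B_R$, so that both sides of the claimed inequality vanish.

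Next comes the expansion of positivity. I would record, as a consequence of Lemma \ref{lemma4frozen}, the following: there are $\delta_0\in(0,1)$ and a constant $C_\star\ge 1$, depending only on $\data$ and $Q$, such that $\mu(B_R\cap\{u>l\})\ge\delta_0\,\mu(B_R)$ for some $l>0$ implies $\essinf_{B_{3R}}u\ge l/C_\star$. Starting from Lemma \ref{lemma4frozen} with $\delta=\delta_0$ on the ball $B_R$ gives $\essinf_{B_{R}}u\ge l/C_{1,\delta_0}$; then, using that $B_{6\lambda R}\Subset\Omega$ and $0<R\le\min\{1,\diam(X)/(12\lambda)\}$, one covers $B_{3R}$ by a number $N=N(C_D,\lambda)$ of balls of radius $R/4$ arranged in a connected chain in which the first is contained in $B_R$ and each overlaps a previous one in a fixed proportion of its measure (possible because the space is connected and $\mu$ is doubling, and because each dilated ball $B_{6\lambda(R/4)}$ still lies inside $B_{6\lambda R}\Subset\Omega$), and propagates the lower bound along the chain by repeated use of Lemma \ref{lemma4frozen}. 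Setting $l^\ast=\sup\{l>0:\mu(B_R\cap\{u>l\})\ge\delta_0\,\mu(B_R)\}$, this yields $\essinf_{B_{3R}}u\ge l^\ast/C_\star$.

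It then remains to show that $\bigl(\dashint_{B_R}u^\sigma\,\dd\mu\bigr)^{1/\sigma}\le C\,l^\ast$ for a sufficiently small exponent $\sigma=\sigma(C_D,p,q)>0$; this is the technical heart, and the step where I expect the main obstacle. It is obtained as in \cite{KS}: by the layer-cake formula one splits $\int_{B_R}u^\sigma\,\dd\mu$ at the level $l^\ast$, estimates the part below $l^\ast$ trivially by $(l^\ast)^\sigma\mu(B_R)$, and for the part above $l^\ast$ controls the distribution function $l\mapsto\mu(B_R\cap\{u>l\})$ by iterating the density-decay mechanism underlying Lemma \ref{lemma3frozen} on a nested family of concentric balls and over dyadic levels, summing the resulting geometric series; choosing $\sigma$ strictly below the decay exponent $\xi$ of Lemma \ref{lemma3frozen} makes the integral converge and gives $\dashint_{B_R}u^\sigma\,\dd\mu\le C\,(l^\ast)^\sigma$. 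Combining with $l^\ast\le C_\star\,\essinf_{B_{3R}}u$ yields the assertion, with $\sigma$ and $C$ depending only on the structural data and, crucially, not on $B_R$ nor on $a_0$. The delicate points are: keeping the number of chain links and the overlap fractions dependent only on $C_D$ and $\lambda$ (this is precisely why the hypothesis is stated with $B_{6\lambda R}\Subset\Omega$ rather than merely $B_{3R}\Subset\Omega$), and carrying out the level iteration so that the $a_0$-term in $H_0$ is absorbed at every step, which forces one to work with $H_0$ itself together with Lemma \ref{lemmafrozen1} rather than with the two powers $t^p$ and $t^q$ separately.
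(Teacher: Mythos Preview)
Your proposal is correct and matches the paper's approach, which simply invokes Lemma \ref{lemma4frozen} and refers to the proof of \cite[Theorem 7.1]{KS} for the remaining details. One small correction: the control of the distribution function in the layer-cake step of \cite{KS} is obtained via a Krylov--Safonov covering argument that iterates Lemma \ref{lemma4frozen} on balls where the superlevel set has large density, rather than via the lower-tail decay of Lemma \ref{lemma3frozen}; with that adjustment your outline goes through exactly as written.
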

\begin{proof}
As mentioned before, it follows by Lemma \ref{lemma4frozen} as in Theorem 7.1 of \cite{KS}.
\end{proof}

\section{H\"{o}lder continuity}\label{Sec4}

This section aims to prove H\"older continuity of a local quasiminimizer $u$, see Theorem \ref{localholder} below. Recall that in Section \ref{Boundedness} we proved that if $u$ is a local quasiminimizer, then $u\in L^{\infty}_{\loc}(\Omega).$

From now on, we assume that $\mu$ is upper $Q$-Ahlfors regular, that is there exists a constant $C_1>0$ such that our measure satisfies the following inequality
\begin{equation}\label{upper Q-Ahlfors}
\mu(B_r(x))\leq C_1 r^Q \quad \mbox{for every $x\in X$ and $0<r\leq {\rm diam(X)}.$}
\end{equation}
This assumption ensures some uniformity and regularity in the distribution of the measure. We note that this a general assumption. For example, self-similar fractals, metric measure spaces with controlled curvature, uniformly rectifiable sets and Carnot groups can exhibit upper Ahlfors regularity.

We start by proving some auxiliary results. The key one is the following almost standard Caccioppoli's inequality, which states that, in the $p$-regime case, double phase local quasiminima satisfy a Caccioppoli type inequality analogous to the one satisfied for quasiminima of functionals with just $p$-growth plus some extra controllable terms. In the euclidean case this result has been proven by Colombo-Mingione \cite{CM}.
\begin{lemma}[Almost standard Caccioppoli's inequality]\label{Almost standard Caccioppoli's inequality}
  Assume that  $u\in N^{1,1}_{\loc}({\Omega})$ with $H(\cdot,g_u)\in L^1_{\loc}(\Omega)$  is a local quasiminimizer in $\Omega$ such that 
  \begin{equation}\label{p-regime}
      \sup_{B_R} a(x) \leq C [a]_{\alpha} \mu(B_R)^{\frac{\alpha}{Q}}.
  \end{equation}
  Then there exists $C= C(C_1, p, q, [a]_{\alpha}, \|u\|_{L^{\infty}(B_R)}, K)>0$ such that for any choice of concentric balls $B_s\subset B_t \subset B_R\Subset\Omega $, with $0<t<s\leq R\leq 1$, the following inequality 
\begin{align}\label{ASCaccioppoli}
\int_{B_t}g_{(u-k)_+}^p\, \dd \mu	& \leq C \left(\left(\frac{R}{s-t}\right)^q\int_{B_s} \left|\frac{(u-k)_+}{R}\right|^p\, \dd\mu\right)
\end{align}
is satisfied, where $(u-k)_+= \max\{u-k,0\}$.   
\end{lemma}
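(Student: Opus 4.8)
The plan is to exploit the double phase Caccioppoli inequality from Lemma~\ref{DeGiorgiLemma}, and then \emph{absorb} the $q$-part of the energy into the $p$-part using the $p$-regime hypothesis \eqref{p-regime} together with the $L^\infty$-bound on $u$. More precisely, starting from \eqref{othercacciopoli} applied on the pair $B_t\subset B_s$ with the $\frac{1}{s-t}$-Lipschitz cutoff, I would write
\[
\int_{B_t}g_{(u-k)_+}^p\,\dd\mu\le\int_{S_{k,t}}H(x,g_{(u-k)_+})\,\dd\mu\le C\int_{S_{k,s}}H\!\left(x,\frac{(u-k)_+}{s-t}\right)\dd\mu,
\]
so that the task reduces to estimating the right-hand side, i.e. controlling the term $\int_{S_{k,s}}a(x)\big(\frac{(u-k)_+}{s-t}\big)^q\,\dd\mu$ by a multiple of $\int_{B_s}\big|\frac{(u-k)_+}{R}\big|^p\,\dd\mu$ with the claimed constant.

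\textbf{Key steps.} First I would use \eqref{p-regime} to replace $a(x)$ on $S_{k,s}\subset B_R$ by $C[a]_\alpha\mu(B_R)^{\alpha/Q}$. Next, since $u\in L^\infty(B_R)$ we have $(u-k)_+\le 2\|u\|_{L^\infty(B_R)}=:L$ on $S_{k,s}$ for the relevant levels $k$ (or more simply $(u-k)_+\le L$ whenever $S_{k,s}\ne\emptyset$), so
\[
a(x)\left(\frac{(u-k)_+}{s-t}\right)^q
\le C[a]_\alpha\mu(B_R)^{\frac{\alpha}{Q}}\,\frac{L^{q-p}}{(s-t)^{q-p}}\left(\frac{(u-k)_+}{s-t}\right)^p .
\]
Then I would invoke the upper $Q$-Ahlfors regularity \eqref{upper Q-Ahlfors}: $\mu(B_R)^{\alpha/Q}\le (C_1R^Q)^{\alpha/Q}=C_1^{\alpha/Q}R^{\alpha}$. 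Combining with the gap condition \eqref{pqcond}, namely $q/p\le 1+\alpha/Q$, which gives $q-p\le p\alpha/Q\le\alpha$ (using $p\le \ldots$; more carefully $q-p\le \tfrac{\alpha p}{Q}$), one gets $R^{\alpha}\le R^{q-p}$ for $R\le 1$, hence $\mu(B_R)^{\alpha/Q}\le C_1^{\alpha/Q}R^{q-p}$. Therefore
\[
a(x)\left(\frac{(u-k)_+}{s-t}\right)^q\le C(C_1,[a]_\alpha,L,p,q)\left(\frac{R}{s-t}\right)^{q-p}\left(\frac{(u-k)_+}{s-t}\right)^p,
\]
and after integrating and writing $\frac{1}{(s-t)^p}=\big(\frac{R}{s-t}\big)^p\frac{1}{R^p}$, the exponent $(q-p)+p=q$ on $\frac{R}{s-t}$ emerges, producing exactly the factor $\big(\frac{R}{s-t}\big)^q\int_{B_s}\big|\frac{(u-k)_+}{R}\big|^p\dd\mu$ claimed in \eqref{ASCaccioppoli}. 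The $p$-part $\big(\frac{(u-k)_+}{s-t}\big)^p=\big(\frac{R}{s-t}\big)^p\big|\frac{(u-k)_+}{R}\big|^p$ of $H$ is handled the same way and is dominated by $\big(\frac{R}{s-t}\big)^q\big|\cdot\big|^p$ since $R/(s-t)\ge 1$ and $q\ge p$.

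\textbf{Main obstacle.} The delicate point is bookkeeping the dependence of the constant: I must make sure the bound $(u-k)_+\le 2\|u\|_{L^\infty(B_R)}$ is legitimate for all the levels $k\in\mathbb R$ intended in the statement (for $k\ge\esssup_{B_R}u$ the set $S_{k,s}$ is empty and there is nothing to prove; for $k<\esssup_{B_R}u$ one has $(u-k)_+\le \|u\|_{L^\infty}-k$, which is uncontrolled unless $k\ge-\|u\|_{L^\infty}$), so the cleanest route is to note $\|(u-k)_+\|_{L^\infty(B_R)}^{q-p}$ cancels against one power of $(u-k)_+$ only where $(u-k)_+>0$ and there $(u-k)_+\le\osc$-type quantity; in practice, since the statement already records $\|u\|_{L^\infty(B_R)}$ among the constants, I would simply estimate $(u-k)_+^{q-p}\le\big(\|(u-k)_+\|_{L^\infty(B_R)}\big)^{q-p}$ and absorb it, the finiteness being guaranteed by local boundedness proved in Section~\ref{Boundedness}. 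Beyond this, everything is a routine chain of the already-established Caccioppoli inequality, the hypotheses \eqref{p-regime}, \eqref{upper Q-Ahlfors}, \eqref{pqcond}, and elementary manipulations of powers of $R/(s-t)$.
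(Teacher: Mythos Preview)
Your proposal is correct and follows essentially the same route as the paper: start from the double-phase Caccioppoli inequality of Lemma~\ref{DeGiorgiLemma}, bound $a(x)$ via \eqref{p-regime}, split off $(u-k)_+^{q-p}$ using the $L^\infty$-bound, convert $\mu(B_R)^{\alpha/Q}$ into $R^\alpha$ via upper $Q$-Ahlfors regularity \eqref{upper Q-Ahlfors}, and then use $R\le 1$ together with the gap condition to arrive at $\big(\tfrac{R}{s-t}\big)^q\int_{B_s}\big|\tfrac{(u-k)_+}{R}\big|^p\,\dd\mu$. Your ``main obstacle'' is a fair observation---the paper simply writes $\|u\|_{L^\infty(B_R)}^{q-p}$ in place of your more careful $\|(u-k)_+\|_{L^\infty(B_R)}^{q-p}$---but in the applications the levels $k$ lie between $\essinf$ and $\esssup$ on the relevant ball, so either formulation suffices.
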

\begin{proof}
 By Lemma \ref{DeGiorgiLemma}, since $u$ is a local quasiminimizer, then it satisfies the double phase Caccioppoli's inequality \eqref{5.4}. Therefore, we have 
 \begin{align}\label{asc1}
\int_{B_t}g_{(u-k)_+}^p\, \dd \mu &\leq   \int_{B_t}H(x,g_{(u-k)_+})\, \dd \mu    \leq  C \int_{B_s} H\left(x,\frac{(u-k)_+}{s-t}\right)\, \dd\mu \nonumber\\
& = C \left(\int_{B_s} \left|\frac{(u-k)_+}{s-t}\right|^p\, \dd\mu + \int_{B_s} a(x)\left|\frac{(u-k)_+}{s-t}\right|^q\, \dd\mu\right)\nonumber\\
& =  C \left(\left(\frac{R}{s-t}\right)^p\int_{B_s} \left|\frac{(u-k)_+}{R}\right|^p\, \dd\mu +\left(\frac{R}{s-t}\right)^q \int_{B_s} a(x)\left|\frac{(u-k)_+}{R}\right|^q\, \dd\mu\right).
 \end{align}
Now, we estimate the integrand function in the last term of the previous inequality by using \eqref{p-regime} and  \eqref{upper Q-Ahlfors}, we get
\begin{align*}
a(x)\left|\frac{(u-k)_+}{R}\right|^q &\leq C[a]_{\alpha}\frac{\mu(B_R)^{\frac{\alpha}{Q}}}{R^q} \|u\|^{q-p}_{L^{\infty}(B_s)} |(u-k)_+|^p\\
& \leq C[a]_{\alpha}\frac{C_1 R^{\alpha}}{R^q} \|u\|^{q-p}_{L^{\infty}(B_R)} |(u-k)_+|^p\\
&= \frac{C[a]_{\alpha}}{R^{q-\alpha}} \|u\|^{q-p}_{L^{\infty}(B_R)} |(u-k)_+|^p\\
&\leq \frac{C [a]_{\alpha}}{R^{p}}\|u\|^{q-p}_{L^{\infty}(B_R)} |(u-k)_+|^p= C \left|\frac{(u-k)_+}{R}\right|^p,
\end{align*}
 where $C=C(C_1, p, q, [a]_{\alpha}, \|u\|_{L^{\infty}(B_R)})$.
 Thus, \eqref{asc1} becomes
\begin{align*}
\int_{B_t}g_{(u-k)_+}^p\, \dd \mu &\leq  C \left(\left(\frac{R}{s-t}\right)^p\int_{B_s} \left|\frac{(u-k)_+}{R}\right|^p\, \dd\mu +C\left(\frac{R}{s-t}\right)^q\int_{B_s} \left|\frac{(u-k)_+}{R}\right|^p\, \dd\mu\right)\\
& \leq C \left(\frac{R}{s-t}\right)^q \int_{B_s} \left|\frac{(u-k)_+}{R}\right|^p\, \dd\mu.
 \end{align*}
\end{proof}

\begin{lemma}\label{localLemma1} Let $B_{6\lambda R}\Subset \Omega$ be a ball, such that $0<R\leq\min\lbrace 1, \frac{\diam (X)}{12\lambda}\rbrace$ such that \eqref{p-regime} holds for the concentric ball $B_{3\lambda R}\Subset\Omega$. 
Assume $u\in N^{1,1}_{\textrm{loc}}(\Omega)$, with $H(\cdot, g_u)\in L^1_{\textrm{loc}}(\Omega)$ belongs to $DG_H(\Omega)$. Let $h>0$ such that the density condition
    \begin{equation}\label{local1}
    \mu(S_{h,R})\leq\gamma\mu(B_R),
    \end{equation}
    holds for some $\gamma\in(0,1).$ Then, there exist a positive constant $C=C(\data, C_1, \|u\|_{L^{\infty}(B_R)}, \gamma)$ such that for any $k<h$ the following inequality
    \begin{align}\label{local2}
    (k-h)\mu(S_{k,R})&\leq C \mu(B_R)^{1-\frac{1}{s}}\left(\mu(S_{h,\lambda R})-\mu(S_{k,\lambda R})\right)^{\frac{1}{s}-\frac{1}{p}}\left(\int_{S_{h,2\lambda R}}
(u-h)^p\dd\mu\right)^{\frac{1}{p}}
\end{align}
holds.
\end{lemma}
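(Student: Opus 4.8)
The statement to prove is Lemma~\ref{localLemma1}, which is the $p$-regime analogue of Lemma~\ref{lemma5frozen}: we want a level-set decay inequality of the form \eqref{local2} for a function $u \in DG_H(\Omega)$ under the $p$-regime assumption \eqref{p-regime}. The plan is to mimic the proof of Lemma~\ref{lemma5frozen}, but replacing the frozen functional $H_0$ throughout by the plain $p$-energy $g_u^p$, using the Almost standard Caccioppoli inequality (Lemma~\ref{Almost standard Caccioppoli's inequality}) to convert the double phase Caccioppoli bound into a $p$-Caccioppoli bound with the extra factor $(R/(s-t))^q$. So the proof will produce, instead of a bound on $\int H_0(g_v)$, a bound on $\int g_v^p$ on an annulus, and instead of the frozen Sobolev inequality for functions vanishing on a large set (Theorem~\ref{FSPineq0largeSet}), the plain $(s^*,s)$-Sobolev inequality for such functions (Lemma~2.1 of \cite{KS}) used inside the proof of Theorem~\ref{FSPineq0largeSet}.

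\textbf{Key steps, in order.} First, fix $k < h$ and set $v = \min\{u,k\} - \min\{u,h\}$ exactly as in \eqref{eq18frozen}, so that $v = 0$ where $u \le h$, $v = u-h$ where $h < u < k$, and $v = k-h$ where $u \ge k$. From the density condition \eqref{local1} and this description, $\mu(\{x \in B_R : v(x) > 0\}) \le \gamma \mu(B_R)$, so $v$ vanishes on a large set. Second, apply the Sobolev inequality for functions vanishing on a large set — one can directly use the $(s^*,s)$-form with $1 < s < p \le q < s^*$ coming from Keith--Zhong (Theorem~\ref{kz}) and Theorem~\ref{sstars}, as in the proof of Theorem~\ref{FSPineq0largeSet} — in the integrated, H\"older-normalized form:
\begin{align*}
(k-h)\,\mu(S_{k,R})^{1/s^*}
&\le \Big(\int_{B_R} |v|^{s^*}\,\dd\mu\Big)^{1/s^*}
\le C\,r\,\mu(B_R)^{1/s^*-1/s}\Big(\int_{B_{\lambda R}} g_v^{s}\,\dd\mu\Big)^{1/s},
\end{align*}
and then move from $s$ to $p$ by H\"older, picking up the measure factor $\mu(B_{\lambda R})^{1/s - 1/p}$ restricted to the support $S_{h,\lambda R}\setminus S_{k,\lambda R}$ of $g_v$; this is where the factors $\mu(B_R)^{1-1/s}$ and $(\mu(S_{h,\lambda R}) - \mu(S_{k,\lambda R}))^{1/s - 1/p}$ in \eqref{local2} come from (after using the doubling property to replace $\mu(B_{\lambda R})$ by $\mu(B_R)$ up to a constant, and noting $g_v \le g_{(u-h)_+}$ on that annulus). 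Third, apply the Almost standard Caccioppoli inequality \eqref{ASCaccioppoli} to $(u-h)_+$ on the pair of balls $B_{\lambda R} \subset B_{2\lambda R}$ (with $s - t \sim \lambda R$ there) to get $\int_{B_{\lambda R}} g_{(u-h)_+}^p \,\dd\mu \le C \int_{B_{2\lambda R}} |(u-h)_+/R|^p \,\dd\mu$, and observe the right-hand side is supported in $S_{h,2\lambda R}$; inserting this and absorbing all radius powers ($R \le 1$, and $r = R$) and doubling constants into $C$ yields \eqref{local2}. Throughout, the constant picks up a dependence on $\|u\|_{L^\infty(B_R)}$ and $C_1$ precisely through the Almost standard Caccioppoli inequality, which accounts for the stated dependencies $C = C(\data, C_1, \|u\|_{L^\infty(B_R)}, \gamma)$.

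\textbf{Main obstacle.} The bookkeeping of the exponents is the delicate part: one must choose $s \in (1,p)$ via Keith--Zhong so that $p \le q < s^*$ — this is possible because the self-improvement gives $s$ arbitrarily close to $p$ from below, hence $s^*$ close to $p^* = Qp/(Q-p) > q$ is not automatic unless $Q > p$; in the borderline cases $p \ge Q$ one uses instead that $s^*$ can be taken arbitrarily large, so $q < s^*$ is free. One then has to track how $1/s^* $, $1/s$ and $1/p$ combine so that the final measure exponents are exactly $1 - 1/s$ and $1/s - 1/p$; the cleanest route is to keep the Sobolev inequality in its $(s^*,s)$-form, raise both sides to power $1$ after a single H\"older step passing $g_v$ from $L^s$ to $L^p$ on the set where $g_v \neq 0$, and only at the very end invoke the $p$-Caccioppoli inequality \eqref{ASCaccioppoli}. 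A secondary point requiring care is that \eqref{p-regime} is assumed on $B_{3\lambda R}$, so all applications of Lemma~\ref{Almost standard Caccioppoli's inequality} and of the $L^\infty$ bound must be on balls contained in $B_{3\lambda R}$; since $2\lambda R < 3\lambda R$, the annulus and the outer ball $B_{2\lambda R}$ used above are admissible, but this containment should be stated explicitly.
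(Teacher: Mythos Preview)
Your proposal is correct and follows essentially the same approach as the paper: define $v=\min\{u,k\}-\min\{u,h\}$, use the density condition to invoke a Sobolev--Poincar\'e inequality for functions vanishing on a large set (Lemma~2.1 of \cite{KS}), pass from $L^s$ to $L^p$ by H\"older on the support $S_{h,\lambda R}\setminus S_{k,\lambda R}$ of $g_v$, and finish with the almost standard Caccioppoli inequality (Lemma~\ref{Almost standard Caccioppoli's inequality}) on $B_{\lambda R}\subset B_{2\lambda R}$.

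The only difference is in how the Sobolev step is organized: the paper first applies H\"older to pass from $\int_{B_R} v\,\dd\mu$ to $(\int_{B_R} v^s\,\dd\mu)^{1/s}$, picking up $\mu(B_R)^{1-1/s}$ directly, and then uses the $(s,s)$-Poincar\'e for functions vanishing on a large set; this avoids $s^*$ altogether and immediately yields the exponents in \eqref{local2}. Your $(s^*,s)$-route leaves $\mu(S_{k,R})^{1/s^*}$ on the left and $\mu(B_R)^{1/s^*-1/s}$ on the right, so you still need the trivial step $\mu(S_{k,R})\le \mu(S_{k,R})^{1/s^*}\mu(B_R)^{1-1/s^*}$ to recover the stated form; this is harmless but makes the bookkeeping you flag as the ``main obstacle'' slightly heavier than necessary. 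Note also that the paper (and your computation) actually require $k>h$ --- the ``$k<h$'' in the statement is a typo, as is clear from both the sign of $(k-h)$ and the description of $v$.
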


\begin{proof} 
For $k> h$, let us define
\begin{equation}\label{v(x)}
	v = \min\{u, k\} - \min\{u, h\}.
\end{equation} Note that, by \eqref{v(x)}, we have
\begin{equation} \label{4.0bisineq}
	v=\begin{cases}
		0 \hspace{1.98cm} \mbox{if $u\leq h<k$,}\\
		u-h \hspace{1.3cm} \mbox{if $h<u<k$,}\\
		k-h \hspace{1.3cm} \mbox{if $u\geq k>h$.}
	\end{cases}
\end{equation}

From \eqref{local1} and \eqref{4.0bisineq}, we deduce that $\mu(\{x \in B_R: v(x) > 0\}) \leq \gamma\mu(B_R).$ We recall that the space supports a weak $(1,s)$-Poincar\'e inequality for some $1<s<p<q$ (see Theorem \ref{kz}). By using H\"{o}lder inequality  and Lemma 2.1 of \cite{KS} with $t=s$, we obtain
\begin{align*}
    (k-h)\mu(S_{k,R})&=\int_{S_{k,R}}v\,\dd \mu\leq \int_{B_R}v\,\dd\mu\leq \mu(B_R)^{1-\frac{1}{s}}\left(\int_{B_R}v^s\dd\mu\right)^{\frac{1}{s}}\\
&\leq C R\,\mu(B_R)^{1-\frac{1}{s}}\left(\int_{B_{\lambda R}}
g_{v}^s \dd\mu\right)^{\frac{1}{s}}\\
& \leq CR\,\mu(B_R)^{1-\frac{1}{s}}\left(\int_{S_{h,\lambda R}\setminus S_{k,\lambda R}}
g_{v}^s \dd\mu\right)^{\frac{1}{s}}\\
&\leq CR\,\mu(B_R)^{1-\frac{1}{s}}\left(\int_{S_{h,\lambda R}}
g_{v}^p \dd\mu\right)^{\frac{1}{p}} \left(\mu(S_{h,\lambda R})-\mu(S_{k,\lambda R})\right)^{\frac{1}{s}-\frac{1}{p}}.
\end{align*}
By the almost standard Caccioppoli's inequality (Lemma \ref{Almost standard Caccioppoli's inequality}), we get
\begin{align*}
    (k-h)\mu(S_{k,R})&\leq CR\mu(B_R)^{1-\frac{1}{s}}\left(C\left(\frac{\lambda R}{\lambda R}\right)^q\int_{S_{h,2\lambda R}}
\left|\frac{u-h}{\lambda R}\right|^p\dd\mu\right)^{\frac{1}{p}}\left(\mu(S_{h,\lambda R})-\mu(S_{k,\lambda R})\right)^{\frac{1}{s}-\frac{1}{p}}\\
&\leq C\mu(B_R)^{1-\frac{1}{s}}\left(\int_{S_{h,2\lambda R}}
(u-h)^p\dd\mu\right)^{\frac{1}{p}}\left(\mu(S_{h,\lambda R})-\mu(S_{k,\lambda R})\right)^{\frac{1}{s}-\frac{1}{p}},
\end{align*}
where $C=C(\data, C_1, \|u\|_{L^{\infty}(B_R)}, \gamma)$.
\end{proof}

\noindent For any $B_\rho \Subset \Omega$, we shall denote
$$m(\rho) = \essinf_{B_\rho}u,\ \ M(\rho) = \esssup_{B_\rho}u$$
and
$$
\osc(u,\rho)=M(\rho)-m(\rho).
$$

\begin{lemma}\label{localLemma2} 
    Let $B_{6\lambda R}\Subset\Omega$ be a ball, with $0<R\leq\min\lbrace 1, \frac{\diam (X)}{12\lambda}\rbrace$, such that \eqref{p-regime} holds for the concentric ball $B_{3\lambda R}\Subset\Omega$. 
	Let $M =M(3\lambda R)$, $m = m(3 \lambda R)$.
Let $u\in N^{1,1}_{\textrm{loc}}(\Omega)$ with $H(\cdot, g_u)\in L^1_{\textrm{loc}}(\Omega)$ belong to $DG_H(\Omega)$. Assume $u$ is bounded from below. Moreover, assume that the density condition
 \begin{equation}\label{estimateSB}
     \mu(S_{k_0,R}) \leq \gamma \mu(B_R),\quad\textrm{for some }0 < \gamma < 1,
 \end{equation}
 holds for $k_0= \frac{M+m}{2}$. Then there exists a positive constant $C=C(\data,s, C_1, \gamma, \|u\|_{L^{\infty}(B_R)})$ such that
 \begin{equation}\label{quotientn}
    \dfrac{\mu(S_{k_n}, R)}{\mu(B_R)} \leq C n^{-\zeta},
 \end{equation} with $\zeta=\zeta(p,s)>0$, $n$ positive integer and $k_n=M-2^{-(n+1)} \osc(u,3\lambda R)$.
\end{lemma}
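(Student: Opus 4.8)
The plan is to reproduce the scheme of Lemma~\ref{lemma6frozen}, but using the genuine (non-frozen) density estimate of Lemma~\ref{localLemma1} in place of the frozen one, Lemma~\ref{lemma5frozen}; this is legitimate precisely because the $p$-regime condition \eqref{p-regime} is assumed on the concentric ball $B_{3\lambda R}$. First I would dispose of the degenerate case. Since $u$ is bounded from below and, belonging to $DG_H(\Omega)$, is locally bounded from above by Theorem~\ref{Theorem 4.2}, we have $u\in L^\infty(B_{3\lambda R})$, so $M=M(3\lambda R)$ and $m=m(3\lambda R)$ are finite; if $\osc(u,3\lambda R)=0$ then $k_n=M$ and $\mu(S_{k_n,R})=0$, so the inequality is trivial, and we may assume $M>m$.

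Next I would introduce the levels $k_j=M-2^{-(j+1)}(M-m)$, $j\in\mathbb{N}\cup\{0\}$, so that $k_j\nearrow M$, $M-k_{j-1}=2^{-j}(M-m)$ and $k_j-k_{j-1}=2^{-(j+1)}(M-m)$. For every $j\ge1$ we have $k_{j-1}\ge k_0=\frac{M+m}{2}$, hence $S_{k_{j-1},R}\subseteq S_{k_0,R}$, and the density hypothesis \eqref{estimateSB} gives $\mu(S_{k_{j-1},R})\le\gamma\mu(B_R)$. Thus Lemma~\ref{localLemma1} applies with the smaller level $k_{j-1}$ playing the role of $h$ and the larger level $k_j$ the role of $k$, yielding
\[
(k_j-k_{j-1})\,\mu(S_{k_j,R})\le C\,\mu(B_R)^{1-\frac1s}\bigl(\mu(S_{k_{j-1},\lambda R})-\mu(S_{k_j,\lambda R})\bigr)^{\frac1s-\frac1p}\Bigl(\int_{S_{k_{j-1},2\lambda R}}(u-k_{j-1})^p\,\dd\mu\Bigr)^{\frac1p}.
\]
On $B_{2\lambda R}\subseteq B_{3\lambda R}$ one has $u\le M$ a.e., so $(u-k_{j-1})_+\le M-k_{j-1}=2^{-j}(M-m)$ there; together with the doubling property $\mu(B_{2\lambda R})\le C\mu(B_R)$, the last integral is bounded by $C\,(2^{-j}(M-m))^p\mu(B_R)$. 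Substituting this and $k_j-k_{j-1}=2^{-(j+1)}(M-m)$ the common factor $2^{-j}(M-m)$ cancels; dividing by $\mu(B_R)$ and absorbing the leftover power $\mu(B_R)^{\frac1p-\frac1s}=\mu(B_R)^{-(\frac1s-\frac1p)}$ into the bracket, one obtains, for every $j\ge1$,
\[
\frac{\mu(S_{k_j,R})}{\mu(B_R)}\le C\left(\frac{\mu(S_{k_{j-1},\lambda R})-\mu(S_{k_j,\lambda R})}{\mu(B_R)}\right)^{\frac1s-\frac1p},
\]
with $C=C(\data,s,C_1,\gamma,\|u\|_{L^\infty(B_R)})$.

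Finally I would run the standard summation. Since $0<\frac1s-\frac1p<1$, raising the last inequality to the power $\bigl(\frac1s-\frac1p\bigr)^{-1}$ and, for $n\ge j$, using $\mu(S_{k_n,R})\le\mu(S_{k_j,R})$ on the left (superlevel sets shrink as the level grows), we get $\bigl(\mu(S_{k_n,R})/\mu(B_R)\bigr)^{(\frac1s-\frac1p)^{-1}}\le C\bigl(\mu(S_{k_{j-1},\lambda R})-\mu(S_{k_j,\lambda R})\bigr)/\mu(B_R)$ for $1\le j\le n$. Summing over $j=1,\dots,n$, the right-hand side telescopes to $C\bigl(\mu(S_{k_0,\lambda R})-\mu(S_{k_n,\lambda R})\bigr)/\mu(B_R)\le C\,\mu(B_{\lambda R})/\mu(B_R)\le C$ by the doubling property, whence
\[
n\left(\frac{\mu(S_{k_n,R})}{\mu(B_R)}\right)^{(\frac1s-\frac1p)^{-1}}\le C,\qquad\text{so that}\qquad\frac{\mu(S_{k_n,R})}{\mu(B_R)}\le C\,n^{-\zeta},\quad\zeta=\frac1s-\frac1p>0,
\]
which is \eqref{quotientn}. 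I expect no genuinely new difficulty beyond the frozen case: the only delicate points are the bookkeeping of the powers of $\mu(B_R)$ — checking that the $\mu(B_R)^{1-1/s}$ prefactor supplied by Lemma~\ref{localLemma1} together with the $\mu(B_R)^{1/p}$ coming from the $L^\infty$-bound on $(u-k_{j-1})_+$ leave exactly the scale-invariant ratio inside the exponent — and verifying the density condition \eqref{estimateSB} at each level $k_{j-1}$; both are immediate from the monotonicity of superlevel sets and the doubling property.
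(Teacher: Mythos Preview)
Your proof is correct and follows essentially the same route as the paper: both apply Lemma~\ref{localLemma1} with the pair of levels $(k_{j-1},k_j)$, bound the integral on $S_{k_{j-1},2\lambda R}$ via $u\le M$ and doubling, cancel the common oscillation factor, and then telescope after raising to the reciprocal power, arriving at the same exponent $\zeta=\frac1s-\frac1p=\frac{p-s}{sp}$. Your version is in fact a bit more careful, since you explicitly verify the density hypothesis at each level $k_{j-1}$ (via $S_{k_{j-1},R}\subseteq S_{k_0,R}$) and dispose of the trivial case $M=m$, both of which the paper leaves implicit.
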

\begin{proof} 
 Let $k_j=M-2^{-(j+1)} (M-m)$, $j\in \mathbb{N} \cup \{0\}$. Therefore, $\lim_{j \to +\infty}k_j = M$. Note that $M-k_{j-1}=2^{-j} (M-m)$ and $k_j-k_{j-1}=2^{-(j+1)} (M-m)$. By \eqref{local2} in Lemma \ref{localLemma1} and the doubling property, we deduce
	\begin{align*}
	2^{-(j+1)} (M-m)\mu(S_{k_j,R})&	=(k_j-k_{j-1}) \mu(S_{k_j,R})\\ &\leq C\mu(B_R)^{1-\frac{1}{s}}\left(\mu(S_{k_{j-1},\lambda R})-\mu(S_{k_{j},\lambda R})\right)^{\frac{1}{s}-\frac{1}{p}}\left(\int_{S_{k_{j-1},2 \lambda R}}
(u-k_{j-1})^p\dd\mu\right)^{\frac{1}{p}}\\
&\leq C\mu(B_R)^{1-\frac{1}{s}+\frac{1}{p}}\left(\mu(S_{k_{j-1},\lambda R})-\mu(S_{k_{j},\lambda R})\right)^{\frac{1}{s}-\frac{1}{p}}(M-k_{j-1})\\
&= C\mu(B_R)^{1-\frac{1}{s}+\frac{1}{p}}\left(\mu(S_{k_{j-1},\lambda R})-\mu(S_{k_{j},\lambda R})\right)^{\frac{1}{s}-\frac{1}{p}}2^{-j} (M-m).
	\end{align*}
where $C=C(\data,C_1,\gamma,\|u\|_{L^{\infty}(B_R)})$.
Therefore, for every $j\geq 0$, we obtain
	\begin{align*}
	\mu(S_{k_j,R}) \leq C \mu(B_R)^{1-\frac{1}{s}+\frac{1}{p}} \Big(\mu(S_{k_{j-1}, \lambda R})&-\mu(S_{k_j, \lambda R})\Big)^{\frac{1}{s}-\frac{1}{p}}.
	\end{align*}
 
\noindent If $n>j $, then $\mu(S_{k_n,  R})\leq \mu(S_{k_j,  R})$, and so
	\begin{align*}
		\mu(S_{k_n,  R})\leq  C \mu(B_R)^{1-\frac{1}{s}+\frac{1}{p}} \Big(\mu(S_{k_{j-1}, \lambda R})-\mu(S_{k_j, 2
  \lambda R})\Big)^{\frac{1}{s}-\frac{1}{p}}.
	\end{align*}
	By summing the above inequality over $j=0,..., n-1$ and using \eqref{estimateSB}, we get
	\begin{align*}
		n\mu(S_{k_n,  R})^{\frac{sp}{p-s}}&\leq C \mu(B_R)^{\frac{sp-p+s}{p-s}}\,\Big(\mu(S_{k_{0},\lambda R})-\mu(S_{k_n, \lambda R})\Big) \\
&\leq C \mu(B_R)^{\frac{sp-p+s}{p-s}}\,\mu(S_{k_{0}, \lambda R}) \\
&\leq C \mu(B_R)^{\frac{sp-p+s}{p-s}}\,\mu(S_{k_{0}, R}) \\
  &\leq C  \mu(B_R)^{\frac{sp}{p-s}}.
	\end{align*}
 Therefore, 
 \begin{equation*}
    \dfrac{\mu(S_{k_n}, R)}{\mu(B_R)}\leq C n^{-\frac{p-s}{sp}} \leq C n^{-\zeta},
 \end{equation*} with $\zeta=\zeta(p,s)>0$ and $C=C(\data,s, C_1, \gamma, \|u\|_{L^{\infty}(B_R)})$. 
 
 In particular, we have $\lim_{n\to+\infty}\mu(S_{k_n  R})=0$ and, since 
	$\mu(S_{k,  R})$ is a monotonic decreasing function of $k$, we conclude that $\lim_{k \to M}\mu(S_{k,R})=0$.
\end{proof}

\begin{remark}\label{-uinsteadofu}
	We note that it is not restrictive to suppose that 
	\begin{equation}\label{muSalpha}
		\mu(S_{k, R})\leq \frac{\mu(B_R)}{2} \quad\mbox{for all $k \in \mathbb{R}$, $R>0$ with $B_R \subset \Omega$.}
	\end{equation}
	In fact, if $$\mu(S_{k, R})=\mu(\{x \in B_R:u(x)>k\})> \frac{\mu(B_R)}{2},$$ then
	$$\mu(\{x \in B_R:-u(x)\leq - k\})>\frac{\mu(B_R)}{2},$$
	and so, $$\mu(\{x \in B_R:-u(x)>- k\})<\frac{\mu(B_R)}{2}.$$
	That is, inequality \eqref{muSalpha} holds true considering $-u$ instead of $u$.
\end{remark}

\begin{lemma}\label{beforeosc} Let $B_{6\lambda R}\Subset\Omega$ be a ball, with $0<R\leq\min\lbrace 1, \frac{\diam (X)}{12\lambda}\rbrace$, such that \eqref{p-regime} holds for the ball $B_{3\lambda R}\Subset\Omega$. 
	Assume $u\in N^{1,1}_{\textrm{loc}}(\Omega)$ with $H(\cdot, g_u)\in L^1_{\textrm{loc}}(\Omega)$ belongs to $DG_H(\Omega)$ and it is bounded from below. Then, for every $0\leq r\leq 3\lambda R$ and every $\kappa\in (0,1)$, there exists $\sigma\in (0,1)$, $\sigma=\sigma(\data, C_1,\Vert u\Vert_{L^{\infty}(B_r)}, \kappa)$, such that, if for some $\varepsilon>0$ the density condition
 \begin{equation}\label{densitycondMosc}
     \mu\left(S_{M(r)-\varepsilon\osc(u,r), r}\right)\leq \sigma\mu(B_r)
 \end{equation}
holds, then
\begin{equation}\label{conclusiondensity}
    u(x)\leq M(r)-\kappa\varepsilon\osc(u,r)
\end{equation}
holds a.e. in $B_{r/2}$.
 \end{lemma}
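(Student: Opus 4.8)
The plan is to run a De Giorgi iteration on the super-level sets of $u$, organized exactly as in the proof of Lemma~\ref{lemma7frozen}, but with the frozen Caccioppoli and frozen Sobolev--Poincar\'e inequalities replaced by their $p$-regime counterparts: the almost standard Caccioppoli inequality (Lemma~\ref{Almost standard Caccioppoli's inequality}), which is available here thanks to the assumption \eqref{p-regime} on $B_{3\lambda R}$, and the classical Sobolev--Poincar\'e inequality for functions with zero boundary values in its higher integrability form (equivalently, Theorem~\ref{FSPineq0boundary} with $a_0=0$). Write $M=M(r)$ and $\omega=\osc(u,r)$, and recall that $u\in L^\infty(B_r)$ by Theorem~\ref{Theorem 4.2} together with the assumed lower bound on $u$, so $M\le\Vert u\Vert_{L^\infty(B_r)}$. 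For $i\ge0$ I set $\rho_i=\tfrac r2(1+2^{-i})\searrow\tfrac r2$, $\bar\rho_i=\tfrac12(\rho_i+\rho_{i+1})$, choose Lipschitz cut-offs $\eta_i\in\mathrm{Lip}_0(B_{\bar\rho_i})$ with $\eta_i=1$ on $B_{\rho_i}$ and $g_{\eta_i}\le C\,2^i/r$, define the increasing levels $k_i=M-\kappa\varepsilon\omega-(1-\kappa)\varepsilon\omega\,2^{-i}\nearrow M-\kappa\varepsilon\omega$, put $v_i=\eta_i(u-k_i)_+\in N_0^{1,1}(B_{\bar\rho_i})$, and abbreviate $A_i=\mu(S_{k_i,\rho_i})/\mu(B_{\rho_i})$.

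The core estimate is obtained as follows. Since $v_i=(u-k_i)_+\ge k_{i+1}-k_i$ on $S_{k_{i+1},\rho_{i+1}}$, we have $(k_{i+1}-k_i)^p\mu(S_{k_{i+1},\rho_{i+1}})\le\int_{B_{\bar\rho_i}}v_i^p\,\dd\mu$. Applying H\"older's inequality with exponent $d_1>1$, then the zero-boundary Sobolev--Poincar\'e inequality with exponents $pd_2<p<pd_1$ (where $0<d_2<1<d_1$ are chosen exactly as in the proof of Theorem~\ref{FSPineq0largeSet}), then H\"older once more to pass from $g_{v_i}^{pd_2}$ down to $g_{v_i}^p$, and factoring out $\mu(S_{k_i,\bar\rho_i})$, one is led to
\[
(k_{i+1}-k_i)^p\mu(S_{k_{i+1},\rho_{i+1}})\le C\,\bar\rho_i^{\,p}\Bigl(\frac{\mu(S_{k_i,\bar\rho_i})}{\mu(B_{\bar\rho_i})}\Bigr)^{1-\frac1{d_1}}\int_{B_{\bar\rho_i}}g_{v_i}^p\,\dd\mu .
\]
Leibniz' rule gives $g_{v_i}^p\le C\bigl(g_{(u-k_i)_+}^p+(2^i/r)^p(u-k_i)_+^p\bigr)$; Lemma~\ref{Almost standard Caccioppoli's inequality} applied on $B_{\bar\rho_i}\subset B_{\rho_i}$ (with $\rho_i-\bar\rho_i\sim r\,2^{-i}$) bounds $\int_{B_{\bar\rho_i}}g_{(u-k_i)_+}^p\,\dd\mu$ by $C(2^i)^q r^{-p}\int_{B_{\rho_i}}(u-k_i)_+^p\,\dd\mu$; and $(u-k_i)_+\le M-k_i\le\varepsilon\omega$ yields $\int_{B_{\rho_i}}(u-k_i)_+^p\,\dd\mu\le(\varepsilon\omega)^p\mu(S_{k_i,\rho_i})$. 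Inserting these, dividing by $(k_{i+1}-k_i)^p=(1-\kappa)^p(\varepsilon\omega)^p\,2^{-(i+1)p}$ and by $\mu(B_{\rho_{i+1}})$, and using the doubling property to compare $\mu(B_{\bar\rho_i})$, $\mu(B_{\rho_i})$, $\mu(B_{\rho_{i+1}})$ (and $\mu(S_{k_i,\bar\rho_i})\le\mu(S_{k_i,\rho_i})$), I arrive at the recursion
\[
A_{i+1}\le\frac{C\,2^{(p+q)i}}{(1-\kappa)^p}\,A_i^{\,1+\beta},\qquad\beta=\frac{d_1-1}{d_1}>0,
\]
where $C=C(\data,C_1,\Vert u\Vert_{L^\infty(B_r)})$, the dependence on $\Vert u\Vert_{L^\infty(B_r)}$, $C_1$ and $K$ entering through the constant of Lemma~\ref{Almost standard Caccioppoli's inequality}.

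To conclude, by the standard fast-geometric iteration lemma (Lemma~7.1 of \cite{G}), $A_i\to0$ as $i\to\infty$ provided
\[
A_0=\frac{\mu\bigl(S_{M(r)-\varepsilon\osc(u,r),\,r}\bigr)}{\mu(B_r)}\le\Bigl(\frac{(1-\kappa)^p}{C}\Bigr)^{1/\beta}2^{-(p+q)/\beta^2}=:\sigma ,
\]
which is precisely the density hypothesis \eqref{densitycondMosc}; here $\sigma\in(0,1)$ and $\sigma=\sigma(\data,C_1,\Vert u\Vert_{L^\infty(B_r)},\kappa)$. Since $A_i\to0$ forces $\mu(S_{k_i,\rho_i})\to0$, and since $S_{M(r)-\kappa\varepsilon\osc(u,r),\,r/2}\subset S_{k_i,\rho_i}$ for every $i$ (because $B_{r/2}\subset B_{\rho_i}$ and $k_i\le M(r)-\kappa\varepsilon\osc(u,r)$), we obtain $\mu\bigl(S_{M(r)-\kappa\varepsilon\osc(u,r),\,r/2}\bigr)=0$, i.e.\ $u(x)\le M(r)-\kappa\varepsilon\osc(u,r)$ a.e.\ in $B_{r/2}$, which is \eqref{conclusiondensity}.

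The one genuinely delicate point is the bookkeeping of the powers of $2^i$ arising simultaneously from the cut-off gradients, from the factor $\bigl(\tfrac{R}{s-t}\bigr)^q$ in Lemma~\ref{Almost standard Caccioppoli's inequality}, and from the shrinking gaps $k_{i+1}-k_i$, so that the recursion ends up of the form $A_{i+1}\le C\,b^iA_i^{1+\beta}$ with $b>1$ and $\beta>0$; the conceptual work is already behind us, being precisely the passage, under \eqref{p-regime}, from the double phase De Giorgi class to an object on which the purely $p$-homogeneous De Giorgi machinery applies.
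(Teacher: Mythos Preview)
Your proof is correct and follows essentially the same De Giorgi iteration as the paper's: set up the same nested balls and levels, combine the almost standard Caccioppoli inequality (Lemma~\ref{Almost standard Caccioppoli's inequality}) with a $p$-Sobolev inequality to derive a recursion $A_{i+1}\le C\,b^i(1-\kappa)^{-p}A_i^{1+\beta}$, and close via Lemma~7.1 of \cite{G}. The only cosmetic difference is that the paper invokes the $(p^*,p)$-Sobolev inequality for functions vanishing on a large set (Lemma~2.1 of \cite{KS}) over the full ball $B_r$, obtaining $\beta=p/Q$, whereas you use the zero-boundary Sobolev--Poincar\'e inequality over $B_{\bar\rho_i}$, obtaining $\beta=(d_1-1)/d_1$.
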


\begin{proof}
    Consider a sequence of nested balls $\lbrace B_{\rho_i}\rbrace$ concentric to $B_R$ for $i\geq 0$, where $\rho_i=\frac{r}{2}(1+2^{-i})\searrow\frac{r}{2}$, and define also $\bar{\rho_i}=(\rho_i+\rho_{i+1})/2$. We use Lipschitz cut-off functions $\eta_i\in \textrm{Lip}_0(B_{\bar{\rho_i}})$ such that $\eta_i= 1$ on $B_{\rho_i}$ and $g_{\eta_i}\leq\frac{C2^{i}}{r}$. We also define the levels $k_i=M(r)-\kappa\varepsilon\osc(u,r)-(1-\kappa)\varepsilon\osc(u,r)/2^{i}\nearrow M(r)-\kappa\varepsilon\osc(u,r)$. Lastly, we define $v_i=\eta_i(u-k_i)_+$. By definition and Leibniz's rule, we obtain

$$
g_{v_i}\leq g_{(u-k_i)_+}\eta_i+(u-k_i)_+ g_{\eta_i}\leq g_{(u-k_i)_+}\eta_i+\frac{C2^i}{r}(u-k_i)_+\qquad\textrm{in }B_{\bar{\rho_i}}.
$$

\noindent Furthermore, for a $\sigma\in (0,1)$, that is yet to be determined, we have

$$
\mu(\lbrace x\in B_r:v(x)>0\rbrace)\leq \mu(S_{k_i,r})\leq \mu(S_{k_0,r})\leq \sigma \mu(B_r).
$$
So that, by Lemma 2.1 of \cite{KS}, H\"older's inequality and Lemma \ref{Almost standard Caccioppoli's inequality}, we obtain
\begin{align*}
    \left(\frac{(1-\kappa)\varepsilon\osc(u,r)}{2^{i+1}}\right)^{p}\mu(S_{k_{i+1},\rho_{i+1}})&=(k_{i+1}-k_i)^{p}\mu(S_{k_{i+1},\rho_{i+1}})\\
    &\leq C\int_{S_{k_{i},\bar{\rho_{i}}}}v_i^p\dd\mu\leq\mu(S_{k_{i},\bar{\rho_{i}}})^{\frac{p}{Q}}\left(\int_{S_{k_{i},\bar{\rho_{i}}}}v_i^{p^*}\dd\mu \right)^{\frac{p}{p^*}}\\
    &\leq C \mu(S_{k_{i},\bar{\rho_{i}}})^{\frac{p}{Q}}\left(\dashint_{B_{r}}\vert v_i\vert^{p^*}\dd\mu\right)^{\frac{p}{p^*}}\mu(B_r)^{\frac{p}{p^*}}\\
    &\leq C r^p\mu(S_{k_{i},\bar{\rho_{i}}})^{\frac{p}{Q}}\left(\int_{B_{\lambda r}}g_{v_i}^{p}\dd\mu\right)\mu(B_r)^{-\frac{p}{Q}}\\
    &\leq C r^p\mu(S_{k_{i},\bar{\rho_{i}}})^{\frac{p}{Q}}\mu(B_{\rho_i})^{-\frac{p}{Q}}\left(\int_{B_{\bar{\rho_i}}}g_{(u-k_i)_+}^{p}\dd\mu+\frac{C2^{ip}}{r^p}\int_{B_{\bar{\rho_i}}}(u-k_i)_+^p\dd\mu\right)\\
     &\leq C r^p\mu(S_{k_{i},\bar{\rho_{i}}})^{\frac{p}{Q}}\mu(B_{\rho_i})^{-\frac{p}{Q}}\Bigg(\left(\frac{R}{\rho_i-\bar{\rho_i}}\right)^q\int_{B_{\rho_i}}\left\vert\frac{u-k_i}{r}\right\vert^{p}\dd\mu\\
&\qquad\qquad\qquad\qquad\qquad\qquad\qquad\qquad+C2^{ip}\int_{S_{k_i,\bar{\rho_i}}}\left\vert\frac{(u-k_i)_+}{r}\right\vert^{p}\dd\mu\Bigg)\\
&\leq C\mu(S_{k_{i},\bar{\rho_{i}}})^{\frac{p}{Q}}\mu(B_{\rho_i})^{-\frac{p}{Q}}2^{iq}(M(r)-k_i)^p\mu(S_{k_i,\rho_i})\\
&\leq C\mu(S_{k_{i},\rho_{i}})^{1+\frac{p}{Q}}\mu(B_{\rho_i})^{-\frac{p}{Q}}2^{iq}(\varepsilon\osc(u,r))^p.
\end{align*}
Therefore, 
$$
 \left(\frac{(1-\kappa)\varepsilon\osc(u,r)}{2^{i+1}}\right)^{p}\mu(S_{k_{i+1},\rho_{i+1}})\leq C\mu(S_{k_{i},\rho_{i}})^{1+\frac{p}{Q}}\mu(B_{\rho_i})^{-\frac{p}{Q}}2^{iq}(\varepsilon\osc(u,r))^p.
$$
Dividing both sides of the previous inequality by $\mu(B_{\rho_i})$, we obtain
$$
\frac{\mu(S_{k_{i+1},\rho_{i+1}})}{\mu(B_{\rho_{i+1}})}\leq \frac{\mu(S_{k_{i+1},\rho_{i+1}})}{\mu(B_{\rho_{i}})}\leq C \left(\frac{\mu(S_{k_{i},\rho_{i}})}{\mu(B_{\rho_{i}})}\right)^{1+\frac{p}{Q}}(4^q)^i(1-\kappa)^p,
$$
where $C=C(\data,C_1, \Vert u\Vert_{L^{\infty}(B_r)},\sigma)$. Therefore, if we define $\Sigma_i=\frac{\mu(S_{k_{i},\rho_{i}})}{\mu(B_{\rho_{i}})}$, we obtain the recursive estimate
$$
\Sigma_{i+1}\leq \frac{C(4^q)^{i}}{(1-\kappa)^p}\Sigma_{i}^{1+\frac{p}{Q}}, \qquad C>0.
$$
In order to prove \eqref{conclusiondensity}, we need $\Sigma_i\rightarrow 0$ as $i\rightarrow +\infty$. By Lemma 7.1 in \cite{G}, this happens if 
$$
\Sigma_0\leq\left(\frac{C}{(1-\kappa)^p}\right)^{-\frac{Q}{p}}(4^q)^{-\frac{Q^2}{p}}.
$$
Meaning, 
$$
\frac{\mu(S_{M(r)-\varepsilon\osc(u,r),r})}{\mu(B_r)}<\sigma\leq \left(\frac{C}{(1-\kappa)^p}\right)^{-\frac{Q}{p}}(4^q)^{-\frac{Q^2}{p}}.
$$
Therefore, by choosing $\sigma=\sigma(\data,C_1, \Vert u\Vert_{L^{\infty}(B_r)},\kappa)$, small enough, we indeed obtain \eqref{conclusiondensity} as wanted.

\end{proof}

The following lemma is an oscillation reduction result. 

\begin{lemma}\label{localLemma3} Let $B_{6\lambda R}\Subset\Omega$ be a ball, with $0<R\leq\min\lbrace 1, \frac{\diam (X)}{12\lambda}\rbrace$, such that \eqref{p-regime} holds for the ball $B_{3\lambda R}\Subset\Omega$. 
	Assume $u\in N^{1,1}_{\textrm{loc}}(\Omega)$ with $H(\cdot, g_u)\in L^1_{\textrm{loc}}(\Omega)$ belongs to $DG_H(\Omega)$ and it is bounded from below. Let $0<\rho<R$. Then, there exists $0 < \eta < 1$ such that
	$$\osc(u, 3\lambda \rho) \leq 4^\eta
	\left(\frac{\rho }{R}\right)^\eta \osc(u, 3\lambda R),$$
 with $\eta=\eta(\data,C_1,\Vert u\Vert_{L^{\infty}(B_{3\lambda R})})$.
\end{lemma}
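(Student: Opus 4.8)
The plan is to reproduce the scheme of Lemma \ref{lemma8frozen}, replacing the frozen ingredients by their $p$-regime counterparts: Lemma \ref{localLemma2} (decay of the superlevel sets) in place of Lemma \ref{lemma6frozen}, and Lemma \ref{beforeosc} (density improvement) in place of Lemma \ref{lemma7frozen}. Set $M=M(3\lambda R)$ and $m=m(3\lambda R)$, which are finite because $u$ is locally bounded by the results of Section \ref{Boundedness}. By Remark \ref{-uinsteadofu} I may replace $u$ by $-u$ if necessary — this changes neither $\osc(u,\cdot)$, nor membership in $DG_H(\Omega)$ by Remark \ref{Proposition 3.3KS}, nor the condition \eqref{p-regime}, which involves only $a$ — so that the density condition $\mu(S_{k_0,R})\le\frac12\mu(B_R)$ holds for $k_0=\frac{M+m}{2}$; thus the hypotheses of Lemma \ref{localLemma2} are in force with $\gamma=\frac12$.

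First I would derive a single scale reduction. Lemma \ref{beforeosc} applied with $r=3\lambda R$ and $\kappa=\frac12$ provides $\sigma\in(0,1)$, depending only on $\data$, $C_1$ and $\Vert u\Vert_{L^\infty(B_{3\lambda R})}$, such that if $\mu\big(S_{M-\varepsilon\osc(u,3\lambda R),\,3\lambda R}\big)\le\sigma\mu(B_{3\lambda R})$ for some $\varepsilon>0$, then $u\le M-\frac{\varepsilon}{2}\osc(u,3\lambda R)$ a.e. in $B_{3\lambda R/2}$. To produce such an $\varepsilon$, put $k_n=M-2^{-(n+1)}\osc(u,3\lambda R)$; Lemma \ref{localLemma2} and the doubling property give $\mu(S_{k_n,3\lambda R})/\mu(B_{3\lambda R})\le Cn^{-\zeta}\to0$, so choosing $n^{*}=n^{*}(\zeta,\sigma)$ with $C(n^{*})^{-\zeta}<\sigma$ and taking $\varepsilon=2^{-(n^{*}+1)}$ makes $M-\varepsilon\osc(u,3\lambda R)=k_{n^{*}}$ and triggers the implication. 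Since also $m(3\lambda R/2)\ge m$, subtracting yields
\[
\osc(u,3\lambda R/2)\le\tau\,\osc(u,3\lambda R),\qquad \tau:=1-2^{-(n^{*}+2)}\in(0,1),
\]
with $\tau=\tau(\data,C_1,\Vert u\Vert_{L^\infty(B_{3\lambda R})})$. As every constant above depends only on $\data$, $C_1$ and $\Vert u\Vert_{L^\infty(B_{3\lambda R})}$, and $\Vert u\Vert_{L^\infty(B_r)}\le\Vert u\Vert_{L^\infty(B_{3\lambda R})}$ for $r\le3\lambda R$, the same estimate holds with the same $\tau$ on each concentric dyadic sub-ball. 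Iterating: given $0<\rho<R$, choose $j\ge1$ with $4^{j-1}\le R/\rho<4^{j}$, so $3\lambda\rho\le 3\lambda R\,4^{-(j-1)}$, and, by monotonicity of $\osc(u,\cdot)$ together with repeated use of the one-scale estimate, $\osc(u,3\lambda\rho)\le\tau^{j-1}\osc(u,3\lambda R)$. Writing $\tau=4^{\log_4\tau}$ and $\eta:=-\log_4\tau\in(0,1)$, we get $\tau^{j-1}=4^{-(j-1)\eta}=(4\cdot4^{-j})^{\eta}\le(4\rho/R)^{\eta}=4^{\eta}(\rho/R)^{\eta}$, which is the claim, with $\eta=\eta(\data,C_1,\Vert u\Vert_{L^\infty(B_{3\lambda R})})$.

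The main obstacle I anticipate is not the one-scale reduction — essentially a transcription of the frozen argument — but the uniformity required to iterate it down to arbitrarily small scales: one must be sure that on every concentric sub-ball $B_{3\lambda R/2^{k}}$ the inputs of the first step, in particular the almost standard Caccioppoli inequality of Lemma \ref{Almost standard Caccioppoli's inequality} and hence the $p$-regime bound \eqref{p-regime}, remain available with a constant that does not degenerate as $k\to\infty$. The dependence on $\Vert u\Vert_{L^\infty}$ causes no trouble, since that norm is non-increasing in the radius; for \eqref{p-regime} one uses that $\sup_B a$ is monotone in $B$ and the doubling property to transfer the smallness of $a$ to sub-balls, with the convention that on any sub-ball where \eqref{p-regime} should fail one is in the complementary phase, governed by the frozen oscillation reduction Lemma \ref{lemma8frozen}, whose constants are independent of $a_0$. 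Keeping all of these dependences, and the bookkeeping of the radii $R$, $3\lambda R$ (and of the levels measured against $M(3\lambda R)$), uniform in the scale is where the real care lies.
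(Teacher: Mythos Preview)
Your proof is exactly the paper's approach: the paper's proof is the single line ``follows exactly as in Proof of Lemma \ref{lemma8frozen}'', and you have carried this out verbatim, substituting Lemma \ref{localLemma2} for Lemma \ref{lemma6frozen} and Lemma \ref{beforeosc} for Lemma \ref{lemma7frozen}, then iterating and converting to the power form just as in Lemma \ref{lemma8frozen}. Your final paragraph, flagging the question of whether the $p$-regime hypothesis \eqref{p-regime} persists at each sub-scale during the iteration, is a perceptive observation that the paper's one-line proof does not address; the paper simply treats the transcription as routine, and the phase dichotomy you describe is precisely the mechanism used later in Theorem \ref{localholder}.
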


\begin{proof}
The proof follows exactly as in Proof of Lemma \ref{lemma8frozen}.
\end{proof}

Now, we are ready to prove the main theorem of this section that is a H\"older continuity result for local quasiminima.
The proof is obtained by merging the estimates obtained both for the frozen functional and the $(p,q)$-phase.

\begin{theorem}\label{localholder} Let $u\in N^{1,1}_{\textrm{loc}}(\Omega)$ with $H(\cdot, g_u)\in L^1_{\textrm{loc}}(\Omega)$ be a local quasiminimizer. Then $u$ is locally H\"older continuous.
\end{theorem}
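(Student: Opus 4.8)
The plan is to combine the two oscillation--reduction results already established, Lemma~\ref{lemma8frozen} for the frozen functional and Lemma~\ref{localLemma3} for the $p$-phase, through a dichotomy governed by the size of the modulating coefficient $a(\cdot)$ on the ball under consideration, and then to iterate the resulting decay of the oscillation. Preliminarily, note that since $u$ is a local quasiminimizer, Lemma~\ref{DeGiorgiLemma} together with Remark~\ref{Proposition 3.3KS} gives $u\in DG_H(\Omega)$, and the Corollary following Theorem~\ref{Theorem 4.2} gives $u\in L^\infty_{\loc}(\Omega)$; in particular $u$ is bounded from below on every ball compactly contained in $\Omega$. Fix $x_0\in\Omega$ and $R_0>0$ with $B_{6\lambda R_0}(x_0)\Subset\Omega$ and $R_0\le\min\{1,\diam(X)/(12\lambda)\}$; all balls below are centred at $x_0$. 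By the definition of $\delta_\mu$ and the doubling property there is $c_*=c_*(C_D)$ with $\delta_\mu(x,y)^\alpha\le c_*\,\mu(B_\rho)^{\alpha/Q}$ for $x,y\in B_\rho$, so \eqref{aalpha} yields $\sup_{B_\rho}a-\inf_{B_\rho}a\le c_*[a]_\alpha\,\mu(B_\rho)^{\alpha/Q}$ for every such ball; set $c_0:=2c_*[a]_\alpha$.

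Now fix a scale $0<R\le R_0$ and split according to the size of $a$ on $B_{6\lambda R}$. If $\sup_{B_{6\lambda R}}a\le c_0\,\mu(B_{6\lambda R})^{\alpha/Q}$, the doubling property forces the $p$-regime condition \eqref{p-regime} on $B_{3\lambda R}$, so Lemma~\ref{Almost standard Caccioppoli's inequality} is available and Lemma~\ref{localLemma3} gives $\osc(u,3\lambda\rho)\le 4^{\eta_1}(\rho/R)^{\eta_1}\osc(u,3\lambda R)$ for $0<\rho<R$, with $\eta_1\in(0,1)$ depending on the data, $C_1$ and $\|u\|_{L^\infty(B_{3\lambda R_0})}$ (used as a uniform bound valid for all $R\le R_0$). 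Otherwise $\sup_{B_{6\lambda R}}a> c_0\,\mu(B_{6\lambda R})^{\alpha/Q}$, and then $\sup_{B_{6\lambda R}}a-\inf_{B_{6\lambda R}}a\le\tfrac12\sup_{B_{6\lambda R}}a$, so $a_0:=\inf_{B_{6\lambda R}}a>0$ and $\tfrac12 a_0\le a(x)\le 2a_0$ on $B_{6\lambda R}$. With $H_0(t)=t^p+a_0 t^q$ this gives $\tfrac12 H_0(t)\le H(x,t)\le 2H_0(t)$ on $B_{6\lambda R}$, hence $u$ is a $4K$-quasiminimizer of the frozen functional \eqref{eq1frozen} on $B_{6\lambda R}$ with constant \emph{independent of $a_0$}; by Lemma~\ref{FCaccioppoliIneq} this makes $u$ satisfy \eqref{eq3frozen} for all concentric balls inside $B_{6\lambda R}$, which is what the proof of Lemma~\ref{lemma8frozen} uses, and since $u$ is bounded from below that lemma yields $\osc(u,3\lambda\rho)\le 4^{\eta_2}(\rho/R)^{\eta_2}\osc(u,3\lambda R)$ for $0<\rho<R$, with $\eta_2\in(0,1)$ depending only on $C_{PI},C_D,p,q$.

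Set $\eta:=\min\{\eta_1,\eta_2\}\in(0,1)$. Since $\rho/R<1$ and $\eta_i\ge\eta$, in either case we obtain $\osc(u,3\lambda\rho)\le 4\,(\rho/R)^{\eta}\osc(u,3\lambda R)$ for $0<\rho<R\le R_0$; substituting $r=3\lambda\rho$, $r'=3\lambda R$, this is the decay estimate $\osc(u,r)\le 4\,(r/r')^{\eta}\osc(u,r')$ for $0<r<r'\le 3\lambda R_0$. Iterating from $r'=3\lambda R_0$ downward and using $\osc(u,3\lambda R_0)\le 2\|u\|_{L^\infty(B_{3\lambda R_0})}$ gives $\osc(u,r)\le C(r/R_0)^{\eta}\|u\|_{L^\infty(B_{3\lambda R_0})}$ for all $0<r\le 3\lambda R_0$, with $C$ depending on the data, $C_1$ and $\|u\|_{L^\infty}$. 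As $x_0$ ranges over $\Omega$ and all constants are locally uniform, a standard covering argument shows that $u$ is locally H\"older continuous with exponent $\eta$.

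The main obstacle is the phase analysis in the dichotomy: one has to verify that, when the $p$-regime fails, the local quasi-minimality for the $x$-dependent functional $H$ transfers to local quasi-minimality for the frozen functional $H_0$ with a constant \emph{independent of} $a_0$ --- this is precisely where the quasi-distance $\delta_\mu$ and the quantitative H\"older bound \eqref{aalpha} of $a$ are used, to make $a$ comparable to a positive constant on $B_{6\lambda R}$ uniformly across scales. A secondary, more routine point is to reconcile the two a priori different H\"older exponents $\eta_1,\eta_2$ and to check that the scale restrictions of Lemmata~\ref{localLemma3} and \ref{lemma8frozen} ($B_{6\lambda R}\Subset\Omega$, $R\le\min\{1,\diam(X)/(12\lambda)\}$, and \eqref{p-regime} on the concentric ball in the first case) are met throughout the iteration.
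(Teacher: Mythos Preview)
Your proof is correct and takes essentially the same approach as the paper: a phase dichotomy using Lemma~\ref{localLemma3} in the $p$-regime and Lemma~\ref{lemma8frozen} in the frozen regime, with the key transition being that when \eqref{p-regime} fails the H\"older bound \eqref{aalpha} makes $a(\cdot)$ comparable to a positive constant on the ball, so $u$ becomes a $4K$-quasiminimizer of $H_0$ with constant independent of $a_0$. The only difference is organizational---the paper introduces a critical index $m$ along the dyadic sequence $R_0/4^k$ at which the $p$-regime first fails and then switches permanently to the frozen functional below that scale, whereas you re-check the dichotomy at every scale and take $\eta=\min\{\eta_1,\eta_2\}$; both routes yield the same oscillation decay.
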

\begin{proof}
Since $u$ is a local quasiminimizer, then by Lemma \ref{DeGiorgiLemma}, $u$ satisfies the double phase Caccioppoli inequality \eqref{5.4}. Let $\Omega'\Subset\Omega''\Subset \Omega$, and $0<R_0<\min\lbrace 1, \frac{\diam(X)}{12\lambda}\rbrace$ such that $B_{6\lambda R_0}\subset\Omega''$. For $k\in\mathbb{N}$, we also consider the following conditions
\begin{equation}\label{local14}
\sup_{x\in B_{\frac{3\lambda R_0}{4^k}}}a(x)\leq C[a]_{\alpha}\mu(B_{\frac{3\lambda R_0}{4^k}})^{\alpha/Q},    
\end{equation}
with $C>0$. Let us define the index
\begin{equation*}\label{local15}
    m=\min\lbrace k\in\mathbb{N}\cup\lbrace \infty\rbrace:\ \textrm{\eqref{local14} fails}\rbrace.
\end{equation*}
We can apply Lemma \ref{localLemma3}, with $\rho=\frac{R_0}{4^k}$, and $R=R_0$, to obtain
\begin{equation}\label{local16}
\osc\left(u,\frac{3\lambda R_0}{4^k}\right)\leq 4^{\eta}\theta^k\osc(u,3\lambda R_0),    
\end{equation}
for all $k\geq 1$, where $\eta\in(0,1)$, $\eta=\eta(\data, C_1,\Vert u\Vert_{L^{\infty}(\Omega'')})$ and $\theta=4^{-\eta}<1$. In the case $m<\infty$, then condition \eqref{local14} fails at the ball $B_{\frac{3\lambda R_0}{4^m}}$. Therefore, there exists a point $x_0\in B_{\frac{3\lambda R_0}{4^m}}$ such that
\begin{equation}\label{local17}
    a(x_0)>C[a]_{\alpha}\mu(B_{\frac{3\lambda R_0}{4^m}})^{\alpha/Q}.
\end{equation}
We now want to prove that $u$ is a local quasiminimizer on $B_{\frac{3\lambda R_0}{4^m}}$ of the frozen functional $$H_0(z)=\vert z\vert^p+a(x_0)\vert z\vert^q.$$ Meaning, we want to prove that there exists a constant $\tilde{K}\geq 1$, such that
\begin{equation}\label{local18}
\int_{\omega\cap\lbrace u\neq v\rbrace}H_0(g_u)\dd\mu\leq \tilde{K}\int_{\omega\cap\lbrace u\neq v\rbrace}H_0(g_v)\dd\mu,    
\end{equation}
for every open subset $\omega\Subset B_{\frac{3\lambda R_0}{4^m}}$, and for every function $v\in N^{1,1}(\omega)$ with $u-v\in N_0^{1,1}(\omega)$.

\noindent Since $u$ is a local quasiminimizer, there exists $K\geq 1$, such that
\begin{equation}\label{local19}
\int_{\omega\cap\lbrace u\neq v\rbrace}H(x, g_u)\dd\mu\leq K\int_{\omega\cap\lbrace u\neq v\rbrace}H(x, g_v)\dd\mu,    
\end{equation}
for every open subset $\omega\Subset \Omega$, and for every function $v\in N^{1,1}(\omega)$ with $u-v\in N_0^{1,1}(\omega)$. In particular, since $B_{\frac{3\lambda R_0}{4^m}}\Subset \Omega$, inequality \eqref{local19} holds for every open subset $\omega\Subset B_{\frac{3\lambda R_0}{4^m}}$. Note that, by \eqref{local17}, for any $x\in B_{\frac{3\lambda R_0}{4^m}}$ we have

\begin{align*}
    2a(x)&=2a(x_0)-2(a(x_0)-a(x))\geq a(x_0)+C[a]_{\alpha}\mu(B_{\frac{3\lambda R_0}{4^m}})^{\alpha/Q}-2\vert a(x_0)-a(x)\vert\\
    &\geq a(x_0)+C\sup_{x,y \in B_{\frac{3\lambda R_0}{4^m}},\ x\neq y} \dfrac{|a(x)-a(y)|}{\delta_{\mu}(x,y)^{\alpha}}\mu(B_{\frac{3\lambda R_0}{4^m}})^{\alpha/Q}-2\vert a(x_0)-a(x)\vert\\
    &\geq a(x_0)+2\vert a(x_0)-a(x)\vert-2\vert a(x_0)-a(x)\vert\\
    &=a(x_0).
\end{align*}
Therefore, $2a(x)\geq a(x_0)$ for all $x\in B_{\frac{3\lambda R_0}{4^m}}$. On the other hand,
\begin{align*}
    2a(x_0)&=a(x_0)+a(x_0)\geq a(x_0)+C[a]_{\alpha}\mu(B_{\frac{3\lambda R_0}{4^m}})^{\alpha/Q}\\
    &=a(x)-(a(x)-a(x_0))+C[a]_{\alpha}\mu(B_{\frac{3\lambda R_0}{4^m}})^{\alpha/Q}\\
    &\geq a(x)-\vert a(x)-a(x_0)\vert+ C\sup_{x,y \in B_{\frac{3\lambda R_0}{4^m}},\ x\neq y} \dfrac{|a(x)-a(y)|}{\delta_{\mu}(x,y)^{\alpha}}\mu(B_{\frac{3\lambda R_0}{4^m}})^{\alpha/Q}\\
    &\geq a(x)-\vert a(x)-a(x_0)\vert+\vert a(x)-a(x_0)\vert\\
    &= a(x).
\end{align*}
Therefore, $4a(x)\geq 2a(x_0)\geq a(x)$, for all $x\in B_{\frac{3\lambda R_0}{4^m}}$. So, by \eqref{local18}, the previous inequality, and \eqref{local19}, for $\omega\Subset B_{\frac{3\lambda R_0}{4^m}}$, we have 

\begin{align*}
    \int_{\omega\cap\lbrace u\neq v\rbrace}H_0(g_u)\dd\mu&=\int_{\omega\cap\lbrace u\neq v\rbrace}(g_u^p+a(x_0)g_u^q)\dd\mu\\
    &\leq\int_{\omega\cap\lbrace u\neq v\rbrace}(g_u^p+2a(x)g_u^q)\dd\mu\\
    &\leq2\int_{\omega\cap\lbrace u\neq v\rbrace}(g_u^p+a(x)g_u^q)\dd\mu =2\int_{\omega\cap\lbrace u\neq v\rbrace}H(x,g_u)\dd\mu\\
    &\leq 2K \int_{\omega\cap\lbrace u\neq v\rbrace}H(x,g_v)\dd\mu=2K\int_{\omega\cap\lbrace u\neq v\rbrace}(g_v^p+a(x)g_v^q)\dd\mu\\
    &\leq 2K\int_{\omega\cap\lbrace u\neq v\rbrace}(g_v^p+2a(x_0)g_v^q)\dd\mu\\
    &\leq 4K\int_{\omega\cap\lbrace u\neq v\rbrace}(g_v^p+a(x_0)g_v^q)\dd\mu=4K\int_{\omega\cap\lbrace u\neq v\rbrace}H_0(g_v)\dd\mu.    
\end{align*}
Hence, \eqref{local18} holds. Then, $u$ is a local quasiminimizer of the frozen functional $H_0$ in $B_{\frac{3\lambda R_0}{4^m}}$, with $\tilde{K}=4K$. 

\noindent Therefore, we can now apply Lemma \ref{lemma8frozen}, with $\rho=\tau^h_0\frac{3R_0}{4^m}$, where $0<\tau_0<\frac{1}{8}$, and $R=\frac{3R_0}{2\cdot 4^m}$, to obtain
\begin{align*}
    \osc\left(u, \tau^h_0\frac{3\lambda R_0}{4^m}\right)&\leq 4^{\eta}(2\tau_0^h)^{\eta}\osc\left(u, \frac{3\lambda R_0}{4^m}\right)\\
    &\leq 4^{\eta}\theta^h\osc\left(u, \frac{3\lambda R_0}{4^m}\right),
\end{align*}
where $0<\theta=4^{-\eta}<1$. We note that $\eta$ does not depend on $a(x_0)$.

\noindent Thus, for all $h\geq 1$
$$
\osc\left(u, \tau^h_0\frac{3\lambda R_0}{4^m}\right)\leq 4^{\eta}\theta^h\osc\left(u, \frac{3\lambda R_0}{4^m}\right).
$$
Using this last inequality, together with \eqref{local16}, it yields
\begin{equation}\label{local20}
\osc\left(u, \tau^h_03\lambda R_0\right)\leq 4^{\eta}\theta^h\osc\left(u, 3\lambda R_0\right),  
\end{equation}
for all $h\geq 1$, $\theta\in(0,1).$ Note that, this last inequality also holds in the case $m=\infty$, directly with $\tau_0=\frac{1}{8}$, by \eqref{local18}. Then H\"older continuity of $u$ in $\Omega'$ follows from inequality \eqref{local20} and a standard covering argument since all the previous considerations are independent of the starting ball $B_{R_0}$, as long as  $B_{6\lambda R_0}$ is contained in $\Omega'$.
\end{proof}

\section{Harnack inequality}\label{Sec5}
This section aims to prove Harnack inequality for local quasiminima. 
\begin{theorem}[Harnack inequality]\label{Corollary 7.3 KS}
Let $u\in N^{1,1}_{\textrm{loc}}(\Omega)$ with $H(\cdot, g_u)\in L^1_{\textrm{loc}}(\Omega)$ be a non-negative local quasiminimizer. 
Then, for every ball $B_R$ with $0<R<\min\lbrace 1,\frac{\diam(X)}{18\lambda}\rbrace$, there exists a constant $C$, such that

	$$\esssup_{B_R} u \leq C\,\essinf_{B_R} u$$
	holds. 
 We note that $C$ does not dependent on the radius $R$.
\end{theorem}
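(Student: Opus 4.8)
The plan is to reduce the double phase quasiminimizer, on a single suitably enlarged ball, to an object for which the Harnack inequality is already available with constants independent of the coefficient and of the scale; the reduction is the same phase analysis used in the proof of Theorem~\ref{localholder}. I would first record that, $u$ being a local quasiminimizer, it is locally bounded (Section~\ref{Boundedness}) and, by Theorem~\ref{localholder}, locally Hölder continuous, so that $u\in L^\infty_{\loc}(\Omega)$; by Remark~\ref{Proposition 3.3KS} the same holds for $-u$. Then I would fix $B_R$ as in the statement together with a structural dilation $c_0$ large enough to accommodate all the ball enlargements below, and work on $B_\star:=B_{c_0\lambda R}$, which the hypotheses let me assume compactly contained in $\Omega$. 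Writing $C_\star$ for the structural constant of Lemma~\ref{Almost standard Caccioppoli's inequality}, I would split into two cases according to whether or not the $p$-phase condition $\sup_{B_\star}a\le C_\star[a]_\alpha\,\mu(B_\star)^{\alpha/Q}$ holds.

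If this $p$-phase condition holds, I would apply Lemma~\ref{Almost standard Caccioppoli's inequality} to $u$ and to $-u$ to obtain that on $B_\star$ the function $u$ satisfies a Caccioppoli inequality of exponent $p$, of the form $\int_{B_t}g_{(u-k)_\pm}^p\,\dd\mu\le C\,(R/(s-t))^q\int_{B_s}|(u-k)_\pm/R|^p\,\dd\mu$, with $C$ depending only on $\data$, $C_1$, $K$ and $\|u\|_{L^\infty(B_\star)}$. Thus $u$ lies in a De Giorgi class of exponent $p$ — in the mildly generalized form carrying the extra factor $(R/(s-t))^{q-p}\ge1$, which is polynomial in the ratio of radii and hence harmless in the De Giorgi iteration. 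Since $u\ge0$, the classical Harnack inequality for functions in such a class (see \cite{KS,KMM}) yields $\esssup_{B_R}u\le C\,\essinf_{B_R}u$ with $C$ independent of $R$.

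If instead the $p$-phase condition fails, there is $x_0\in B_\star$ with $a(x_0)>C_\star[a]_\alpha\,\mu(B_\star)^{\alpha/Q}$, and I would then repeat verbatim the computation from the proof of Theorem~\ref{localholder} — bounding $|a(x)-a(x_0)|\le[a]_\alpha\,\delta_\mu(x,x_0)^\alpha$ and, via the doubling property, $\delta_\mu(x,x_0)^\alpha\le C\mu(B_\star)^{\alpha/Q}$ for $x,x_0\in B_\star$, with $C_\star$ chosen accordingly — to get $a(x_0)\le2a(x)$ and $a(x)\le2a(x_0)$ for all $x\in B_\star$. This shows that $u$ is a local quasiminimizer of the frozen functional $H_0(z)=|z|^p+a(x_0)|z|^q$ on $B_\star$ with quasiminimality constant $4K$, so by Lemma~\ref{FCaccioppoliIneq} it belongs to $DG_{H_0}(B_\star)$. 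At this point I would combine the two frozen weak Harnack inequalities: Theorem~\ref{upperharnackineqfrozen}, applied at a scale comparable to $R$ (the admissible range of radii provides the room) with $t_+$ equal to the exponent $\sigma$ of Theorem~\ref{essinfharnackfrozen}, gives $\esssup_{B_R}u\le C\big(\dashint_{B}u^{\sigma}\,\dd\mu\big)^{1/\sigma}$; Theorem~\ref{essinfharnackfrozen}, applied at the same scale, gives $C\big(\dashint_{B}u^{\sigma}\,\dd\mu\big)^{1/\sigma}\le\essinf_{B_R}u$, for one and the same ball $B$ comparable to $B_R$. Eliminating the common average yields $\esssup_{B_R}u\le C\,\essinf_{B_R}u$, with $C$ independent of $a(x_0)$ and of $R$ by the final clauses of Theorems~\ref{upperharnackineqfrozen} and \ref{essinfharnackfrozen}. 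The degenerate case $\essinf_{B_R}u=0$ is trivial, since then the lower inequality forces $u\equiv0$ near $B_R$.

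The hard part is the phase dichotomy itself — turning the double phase quasiminimizer into a uniformly controlled single-phase object on one ball, so that the scale- and $a_0$-independent estimates of Section~\ref{frozen} (and of \cite{KS}) apply. Two points call for attention: in the $p$-phase the Caccioppoli constant involves $\|u\|_{L^\infty}$, so the local boundedness of $u$ must be in hand before starting, and one must check that the $(R/(s-t))^{q-p}$ factor does not disrupt the De Giorgi iteration underlying the classical Harnack inequality; and throughout one has to track the dilation factors so that all enlarged balls remain inside $\Omega$ and within the admissible radius range — this is the purpose of the condition $R<\min\{1,\diam(X)/(18\lambda)\}$ together with the compact containment of the enlarged ball — and so that the two frozen weak Harnack inequalities are applied with the $L^\sigma$-average over a common ball, so that they do combine into a genuine Harnack inequality.
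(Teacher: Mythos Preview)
Your proposal is correct and follows essentially the same route as the paper: a phase dichotomy on the enlarged ball, using the frozen weak Harnack inequalities (Theorems~\ref{upperharnackineqfrozen} and \ref{essinfharnackfrozen}) when the $p$-phase condition fails, and the almost standard Caccioppoli inequality plus $p$-De Giorgi-class Harnack arguments when it holds.

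The one presentational difference is in the $p$-phase case. You reduce to a ``mildly generalized'' $p$-De Giorgi class carrying the extra $(R/(s-t))^{q-p}$ factor and invoke \cite{KS,KMM}; the paper instead states and proves its own $p$-phase weak Harnack inequalities (Theorems~\ref{Theorem 7.1KS} and \ref{esssupharnack}), which are precisely the \cite{KS} arguments rerun with Lemma~\ref{Almost standard Caccioppoli's inequality} in place of the standard Caccioppoli. Your remark that the extra polynomial factor is harmless in the De Giorgi iteration is correct and is exactly what those reruns verify, so the two routes coincide in substance. Strictly speaking, the results in \cite{KS} are not stated for this generalized class, so a direct citation is a slight overreach; what you should say is that the \emph{proofs} in \cite{KS} go through verbatim with the modified Caccioppoli, which is what the paper makes explicit by formulating Theorems~\ref{Theorem 7.1KS} and \ref{esssupharnack}.
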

To prove Theorem \ref{Corollary 7.3 KS}, we need to premise various lemmata. We suppose a bound from above for the coefficient function $a(\cdot)$. 
We fix a ball $B_{9\lambda R}\Subset \Omega$, with $0<R<\min\lbrace 1, \frac{\diam(X)}{18\lambda}\rbrace$.  
  We assume that the following condition holds on the concentric ball $B_{6\lambda R}$ \begin{equation}\label{eq1harnack}
   \sup_{x\in B_{6\lambda R}}a(x)\leq C[a]_{\alpha}\mu(B_{6\lambda R})^{\alpha/Q}, 
\end{equation}
holds, with $C>0$.
Basically, we are now working in the $p$-phase, in the sense that 
estimates return to a form that closely resemble the ones associated with quasiminima of the $p$-Dirichlet integral.

\begin{lemma} Assume that \eqref{eq1harnack} holds for the concentric ball $B_{3\lambda R}\Subset\Omega$; $R\leq\min\lbrace 1,\frac{\diam(X)}{18\lambda}\rbrace$. Let $u\in N^{1,1}_{\textrm{loc}}(\Omega)$ with $H(\cdot, g_u)\in L^1_{\textrm{loc}}(\Omega)$ be non-negative, $u \in DG_H(\Omega)$. Fix $\delta\in (0,1)$ and suppose that
\begin{equation}\label{eq2harnack}
\frac{\mu(B_R\cap\lbrace u>l\rbrace)}{\mu(B_R)}\geq \delta
\end{equation} 
holds for some positive level $l>0$. Then
\begin{equation}\label{eq3harnack}
\essinf_{B_R} u\geq \frac{l}{C_{1,\delta}},
\end{equation}
for a constant $C_{1,\delta}=C_{1,\delta}(\data,\Vert u\Vert_{L^{\infty}(B_{3\lambda R})},C_1,\delta)$.
\end{lemma}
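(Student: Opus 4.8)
The statement is an exact analogue, in the $p$-phase regime, of Lemma~\ref{lemma4frozen} for the frozen functional, so the plan is to mirror that argument while tracking the extra $H$-terms produced by the genuine double phase structure and absorbing them via condition \eqref{eq1harnack}. First I would record that, since $u$ is a local quasiminimizer, Lemma~\ref{DeGiorgiLemma} gives $u\in DG_H(\Omega)$, and in particular $u\in DG_H^-(\Omega)$; moreover $u$ is locally bounded by the Corollary in Section~\ref{Boundedness}, so $\Vert u\Vert_{L^\infty(B_{3\lambda R})}<\infty$ and the constants may legitimately depend on it. Under \eqref{eq1harnack} together with the upper $Q$-Ahlfors regularity \eqref{upper Q-Ahlfors}, the almost standard Caccioppoli inequality (Lemma~\ref{Almost standard Caccioppoli's inequality}), applied to $(u-k)_-$ in place of $(u-k)_+$ — which is legitimate since $-u\in DG_H(\Omega)$ by Remark~\ref{Proposition 3.3KS} — controls $\int g_{(u-k)_-}^p$ by the pure $p$-energy $\int|(u-k)_-/R|^p$ up to a factor $(R/(s-t))^q$. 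This is exactly the gain that lets the whole De Giorgi iteration proceed with $p$-type quantities only.

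Next I would run the two-step scheme of Lemma~\ref{lemma4frozen}. Step one is a measure-decay lemma: exactly as in Lemma~\ref{localLemma1} and Lemma~\ref{localLemma2}, but applied to the level sets $\{u\le 2^{-j}l\}$ rather than $\{u>k\}$, one uses the Keith–Zhong self-improved $(1,s)$-Poincar\'e inequality for some $1<s<p$ (Theorem~\ref{kz}), the Sobolev inequality for functions vanishing on a large set (Lemma~2.1 of \cite{KS}), H\"older's inequality and the almost standard Caccioppoli inequality to obtain, from \eqref{eq2harnack},
$$
\frac{\mu(B_R\cap\{u\le 2^{-\hat\jmath}l\})}{\mu(B_R)}\le C\hat\jmath^{-\xi},\qquad \hat\jmath\in\mathbb N,
$$
with $\xi=\xi(p,s)>0$ and $C=C(\data,\Vert u\Vert_{L^\infty(B_{3\lambda R})},C_1,\delta)$; this is the $p$-phase counterpart of Lemma~\ref{lemma3frozen}. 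Step two is the fast-geometric-convergence iteration: set radii $r_i=R(\tfrac12+2^{-i-1})\searrow R/2$, $\tilde r_i=(r_i+r_{i+1})/2$, and levels $k_i=(\tfrac12+2^{-i-1})2^{-\hat\jmath}l\searrow 2^{-\hat\jmath-1}l$, choose Lipschitz cut-offs $\eta_i$ with $\eta_i=1$ on $B_{r_{i+1}}$, $g_{\eta_i}\le 2^{i}/R$, apply the Sobolev inequality on large-zero sets to $\eta_i(u-k_i)_-$, Leibniz' rule, and again the almost standard Caccioppoli inequality; using $(u-k_i)_-\le \chi_{\{u\le k_i\}}2^{-\hat\jmath}l$ and $\chi_{\{u\le k_{i+1}\}}(k_i-k_{i+1})\le (u-k_i)_-$, one arrives at a recursion $A_{i+1}\le C\,b^i A_i^{1+\beta}$ for $A_i=\dashint_{B_{r_i}}\chi_{\{u\le k_i\}}\dd\mu$, with $b>1$ and $\beta>0$ depending only on $\data,s$. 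Lemma~7.1 of \cite{G} then forces $A_i\to0$ provided $A_0=\mu(B_R\cap\{u\le 2^{-\hat\jmath}l\})/\mu(B_R)$ is below an explicit threshold $\tilde C^{-1}$, whence $\essinf_{B_{R/2}}u\ge 2^{-\hat\jmath-1}l$. Choosing $\hat\jmath$ so large that the decay bound $C\hat\jmath^{-\xi}$ from step one is below $\min\{\tfrac12,\tilde C^{-1}\}$ yields \eqref{eq3harnack} on $B_{R/2}$; the statement on $B_R$ itself is obtained by a standard covering/chaining argument (as in Theorem~\ref{essinfharnackfrozen} and Theorem~7.1 of \cite{KS}), at the cost of enlarging the constant but keeping it independent of $R$.

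The main obstacle — and the only genuinely new point over the frozen case — is keeping the constant independent of $a_0$-type quantities while the double phase term is present: the success of every step above hinges on \eqref{eq1harnack} plus \eqref{upper Q-Ahlfors} turning $a(x)|(u-k)_\pm/R|^q$ into $C\,|(u-k)_\pm/R|^p$ with $C=C(C_1,p,q,[a]_\alpha,\Vert u\Vert_{L^\infty})$, so that no factor of $\sup a$ or of $a_0$ survives. One must check carefully that this substitution is available at each invocation of Caccioppoli and of the Sobolev inequalities on the relevant balls $B_{r_i}\subset B_{\tilde r_i}\subset B_R\subset B_{3\lambda R}$, which is where the hypothesis is phrased on the concentric dilate $B_{3\lambda R}$; the monotonicity $\mu(B_{r})^{\alpha/Q}\le\mu(B_{3\lambda R})^{\alpha/Q}$ and $r\le R\le 1$ make all the $R$-powers harmless. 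The remaining bookkeeping (dependence of $\xi$ on $p,s$, of $\beta$ on $Q$, verification that all integrals are finite by the double phase $L^1_{\loc}$ assumption on $H(\cdot,g_u)$) is routine and proceeds verbatim as in Lemma~\ref{lemma3frozen} and Lemma~\ref{lemma4frozen}.
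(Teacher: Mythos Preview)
Your proposal is correct and follows essentially the same two-step scheme as the paper: a density-decay estimate modeled on Lemma~\ref{lemma3frozen} using Lemma~2.1 of \cite{KS} together with the almost standard Caccioppoli inequality, followed by the fast-geometric-convergence iteration modeled on Lemma~\ref{lemma4frozen}. The paper's own proof actually stops at $\essinf_{B_{R/2}}u\ge l/C_{1,\delta}$ (the $B_R$ in the statement is a minor slip it inherits from Lemma~\ref{lemma4frozen}), so your final covering remark is extra care rather than a deviation.
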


\begin{proof}
The proof splits into two steps.

\textbf{Step 1: Density estimate.} We show that \eqref{eq2harnack} implies

\begin{equation}\label{eq3.5harnack}
\frac{\mu(B_R\cap\lbrace u\leq 2^{-\hat{j}}l\rbrace)}{\mu(B_R)}\leq C\hat{j}^{-\xi}
\end{equation}
for any $\hat{j}\in\mathbb{N}$, with $\xi=\xi(p,s)>0$ and $C_{1,\delta}=C_{1,\delta}(\data,\Vert u\Vert_{L^{\infty}(B_{3\lambda R})}, C_1,\delta)$.
For this, we follow the idea of the proof of Lemma \ref{lemma3frozen}.

As before, we define
\begin{equation}\label{eq4harnack}
v_j=\max\lbrace u, 2^{-j}l\rbrace-\max\lbrace u, 2^{-(j+1)}l\rbrace.
\end{equation}

Note that, by \eqref{eq4harnack}, we have
\begin{equation} \label{eq5harnack}
	v=\begin{cases}
		2^{-j}l-2^{-(j+1)}l=2^{-(j+1)}l, \hspace{2.15cm} \mbox{if $u\leq 2^{-(j+1)}l<2^{-j}l$,}\\
		2^{-j}l-u, \hspace{4.9cm} \mbox{if $2^{-(j+1)}l<u<2^{-j}l$,}\\
		0, \hspace{6cm} \mbox{if $u\geq 2^{-j}l>2^{-(j+1)}l$.}
	\end{cases}
\end{equation}
From \eqref{eq2harnack} and \eqref{eq5harnack}, we deduce that

\begin{align*}
\mu(\lbrace x\in B_R:v_j(x)>0\rbrace)&\leq \mu(\lbrace x\in B_R:u(x)\leq 2^{-j}l\rbrace)\leq(1-\delta)\mu(B_R),
\end{align*}
where $0<1-\delta<1$.

Therefore, by Lemma 2.1 of \cite{KS}, H\"older's inequality and the almost standard Caccioppoli inequality, Lemma \ref{Almost standard Caccioppoli's inequality}, we obtain
\begin{align*}
2^{-(j+1)}l\ \mu(B_R&\cap\lbrace u\leq 2^{-(j+1)}l\rbrace)=\int_{B_R\cap\lbrace u\leq 2^{-(j+1)}l\rbrace}v_j\dd\mu\leq \int_{B_R}v_j\dd\mu\\
&\leq\mu(B_R)^{1-\frac{1}{s}}\left(\int_{B_R}v_j^{s}\dd\mu\right)^{\frac{1}{s}}\leq C R\mu(B_R)^{1-\frac{1}{s}}\left(\int_{B_{\lambda R}}g_{v_j}^{s}\dd\mu\right)^{\frac{1}{s}}\\
&\leq C R \mu(B_R)^{1-\frac{1}{s}}\left(\int_{B_{\lambda R}\cap\lbrace 2^{-(j+1)}l<u\leq 2^{-j}l\rbrace}g_{v_j}^{s}\dd\mu\right)^{\frac{1}{s}}\\
&\leq C R \mu(B_R)^{1-\frac{1}{s}}\mu(B_{\lambda R}\cap\lbrace 2^{-(j+1)}l<u\leq 2^{-j}l\rbrace)^{\frac{1}{s}-\frac{1}{p}}\left(\int_{B_{\lambda R}\cap\lbrace 2^{-(j+1)}l<u\leq 2^{-j}l\rbrace}g_{v_j}^{p}\dd\mu\right)^{\frac{1}{p}}\\
&\leq C R \mu(B_R)^{1-\frac{1}{s}}\mu(B_{\lambda R}\cap\lbrace 2^{-(j+1)}l<u\leq 2^{-j}l\rbrace)^{\frac{1}{s}-\frac{1}{p}}\left(\int_{B_{\lambda R}\cap\lbrace 2^{-(j+1)}l<u\leq 2^{-j}l\rbrace}g_{(u-2^{-j}l)_-}^{p}\dd\mu\right)^{\frac{1}{p}}\\
&\leq C R \mu(B_R)^{1-\frac{1}{s}}\mu(B_{\lambda R}\cap\lbrace 2^{-(j+1)}l<u\leq 2^{-j}l\rbrace)^{\frac{1}{s}-\frac{1}{p}}\left(\int_{B_{\lambda R}}g_{(u-2^{-j}l)_-}^{p}\dd\mu\right)^{\frac{1}{p}}\\
&\leq C R \mu(B_R)^{1-\frac{1}{s}}\mu(B_{\lambda R}\cap\lbrace 2^{-(j+1)}l<u\leq 2^{-j}l\rbrace)^{\frac{1}{s}-\frac{1}{p}}\left(\int_{B_{2\lambda R}}\left\vert\frac{(u-2^{-j}l)_-}{3\lambda R}\right\vert^{p}\dd\mu\right)^{\frac{1}{p}}\\
&\leq C\mu(B_R)^{1-\frac{1}{s}}\mu(B_{\lambda R}\cap\lbrace 2^{-(j+1)}l<u\leq 2^{-j}l\rbrace)^{\frac{1}{s}-\frac{1}{p}}\left(\int_{B_{2\lambda R}}\left\vert(u-2^{-j}l)_-\right\vert^{p}\dd\mu\right)^{\frac{1}{p}}\\
&\leq C\mu(B_R)^{1-\frac{1}{s}+\frac{1}{p}}\mu(B_{\lambda R}\cap\lbrace 2^{-(j+1)}l<u\leq 2^{-j}l\rbrace)^{\frac{1}{s}-\frac{1}{p}}2^{-j}l.
\end{align*}
Therefore,
$$
\mu(B_{R}\cap\lbrace u\leq 2^{-(j+1)}l\rbrace)\leq C\mu(B_R)^{1-\frac{1}{s}+\frac{1}{p}}\mu(B_{\lambda R}\cap\lbrace 2^{-(j+1)}l<u\leq 2^{-j}l\rbrace)^{\frac{1}{s}-\frac{1}{p}}.
$$
If $\hat{j}>j$, then $\mu(B_{R}\cap\lbrace u\leq 2^{-\hat{j}}l\rbrace)\leq\mu(B_{R}\cap\lbrace u\leq 2^{-(j+1)}l\rbrace)$. Therefore,
$$
\mu(B_{R}\cap\lbrace u\leq 2^{-\hat{j}}l\rbrace)^{\frac{ps}{p-s}}\leq C\mu(B_R)^{\frac{ps-p-s}{p-s}}\mu(B_{\lambda R}\cap\lbrace 2^{-(j+1)}l<u\leq 2^{-j}l\rbrace).
$$
Summing up for $j=0,1,...,\hat{j}-1$, $\hat{j}\in\mathbb{N}$, and the doubling property of the measure, gives
\begin{align*}
\hat{j}\mu(B_{R}\cap\lbrace u\leq 2^{-\hat{j}}l\rbrace)^{\frac{ps}{p-s}}&\leq C\mu(B_R)^{\frac{ps-p-s}{p-s}}\sum_{j=0}^{\hat{j}-1}\mu(B_{\lambda R}\cap\lbrace 2^{-(j+1)}l<u\leq 2^{-j}l\rbrace)\\
&\leq  C\mu(B_R)^{\frac{ps-p-s}{p-s}+1}.
\end{align*}
Therefore,
$$
\frac{\mu(B_{R}\cap\lbrace u\leq 2^{-\hat{j}}l\rbrace)}{\mu(B_R)}\leq C\hat{j}^{-\left(\frac{p-s}{sp}\right)}=C\hat{j}^{-\xi},
$$
where $\xi=\xi(p,s)>0$ and $=C(\data,\Vert u\Vert_{L^{\infty}(B_{3\lambda R})},C_1,\delta).$

\textbf{Step 2: (Almost everywhere) pointwise strict positivity.} For this step, we follow the idea of the proof of Lemma \ref{lemma4frozen}.

Let $\hat{j}\in\mathbb{N}$ to be determined, and for $i\in\mathbb{N}$, we consider the radii and the levels
$$
r_i=R\left(\frac{1}{2}+\frac{1}{2^{i+1}}\right)\searrow\frac{R}{2},\qquad \tilde{r_i}=\frac{r_{i+1}+r_i}{2},\qquad k_i=\left(\frac{1}{2}+\frac{1}{2^{i+1}}\right)2^{-\hat{j}}l\searrow 2^{-\hat{j}-1}l
$$
Using the almost standard Caccioppoli inequality, Lemma \ref{Almost standard Caccioppoli's inequality} with $t=r_{i+1}$, $s=r_i$ and $k=k_i$ yields, since $\frac{R}{2}\leq r_i\leq R$ for all $i$
\begin{align}\label{eq6harnack}
\int_{B_{\tilde{r_i}}}g_{(u-k_i)_-}^{p}\dd\mu&\leq C\left(\frac{R}{r_i-\tilde{r_i}}\right)^{q}\int_{B_{r_i}}\left\vert\frac{(u-k_i)_-}{R}\right\vert^{p}\dd\mu\nonumber\\
&\leq C(2^{i+2})^{q}\int_{B_{r_i}}\left\vert\frac{(u-k_i)_-}{r_i}\right\vert^{p}\dd\mu\nonumber\\
&\leq C2^{qi}\int_{B_{r_i}}\left\vert\frac{(u-k_i)_-}{r_i}\right\vert^{p}\dd\mu,
\end{align}
where $C=C(C_1,\Vert u\Vert_{L^{\infty}(B_R)},[a]_{\alpha},p,q).$

Now, we take Lipschitz cut-off functions $\eta_i\in \textrm{Lip}_0(B_{\tilde{r_i}})$, $0\leq \eta_i\leq 1$, such that $\eta_i= 1$ on $B_{\tilde{r_i}}$ and $g_{\eta_i}\leq\frac{C2^{i}}{R}$.
By Theorem \ref{sstars} (see also \cite{BB}) our space supports a $(p^*,p)$-Poincar\'e inequality for $p^*=p^*(p,Q)>p$. Furthermore, by definition of $\eta_i$

\begin{equation}\label{eq7harnack}
\left(\dashint_{B_{r_{i+1}}}(u-k_i)_-^{p^*}\dd\mu\right)^{\frac{p}{p^*}}=\left(\dashint_{B_{r_{i+1}}}(\eta_i(u-k_i)_-)^{p^*}\dd\mu\right)^{\frac{p}{p^*}}
\end{equation}

On the other hand, by the previous step, and the doubling property of the measure, we get
\begin{align*}
\mu(\lbrace x\in B_{r_{i+1}}: \eta_i(u-k_i)_->0\rbrace)&\leq \mu(\lbrace x\in B_{r_{i+1}}: u\leq 2^{-\hat{j}}l\rbrace)\\
&\leq C\mu(B_R)\hat{j}^{-\xi}\leq C\mu(B_{R/2})\hat{j}^{-\xi}\\
&\leq C\mu(B_{r_{i+1}})\hat{j}^{-\xi}.
\end{align*}
Therefore, we focus on $\hat{j}\in\mathbb{N}$ big enough so that $C\hat{j}^{-\xi}<\frac{1}{2}$. Moreover, by Leibniz' rule we have

$$
g_{\eta_i(u-k_i)_-}\leq g_{(u-k_i)_-}\eta_i+(u-k_i)_-g_{\eta_i}\leq  g_{(u-k_i)_-}+\frac{C2^{i}}{R}(u-k_i)_-.
$$

So, by Lemma 2.1 of \cite{KS} with $t=p^*$ and $q=p$ and \eqref{eq6harnack}, we obtain

\begin{align*}
\left(\dashint_{B_{r_{i+1}}}(\eta_i(u-k_i)_-)^{p^*}\dd\mu\right)^{\frac{p}{p^*}}&\leq C r_{i+1}^{p}\dashint_{B_{\lambda r_{i+1}}}g_{\eta_i(u-k_i)_-}\dd\mu\\
&\leq Cr_{i+1}^{p}\mu(B_{r_{i+1}})^{-1}\int_{B_{\tilde{r_i}}}\left(g_{(u-k_i)_-}+\frac{C2^{i}}{R}(u-k_i)_-\right)^{p}\dd\mu\\
&\leq C 2^{iq}\dashint_{B_{\tilde{r_i}}}(u-k_i)_-^{p}\dd\mu.
\end{align*}

Therefore, by \eqref{eq7harnack} and this last set of inequalities
\begin{equation}\label{eq8harnack}
\left(\dashint_{B_{r_{i+1}}}(u-k_i)_-^{p^*}\dd\mu\right)^{\frac{p}{p^*}}\leq C 2^{iq}\dashint_{B_{\tilde{r_i}}}(u-k_i)_-^{p}\dd\mu. 
\end{equation}
where $C=C(\data,C_1,\Vert u\Vert_{L^{\infty}(B_R)}).$
We now estimate from above the right-hand side of \eqref{eq16frozen}. Since $u$ is non-negative, then $(u-k_i)_-\leq\chi_{\lbrace u\leq k_i\rbrace} 2^{-\hat{j}}l$ in $B_R$. On the other hand, to estimate from below the left-hand side of \eqref{eq16frozen}, we use

$$
\chi_{\lbrace u\leq k_{i+1}\rbrace} 2^{-\hat{j}-i-2}l=\chi_{\lbrace u\leq k_{i+1}\rbrace}(k_i-k_{i+1})\leq \chi_{\lbrace u\leq k_{i+1}\rbrace}(u-k_i)_-\leq (u-k_i)_.
$$
Therefore, the last three displays and \eqref{frozeninduction2}, yield
$$
\dashint_{B_{r_{i+1}}}\chi_{\lbrace u\leq k_{i+1}\rbrace}\dd\mu\leq C\left(2^{\frac{(q+p)p^*}{p}}\right)^i\left(\dashint_{B_{r_{i}}}\chi_{\lbrace u\leq k_{i}\rbrace}\dd\mu\right)^{1+\left(\frac{p^*-p}{p}\right)}.
$$
Therefore, by defining $A_i=\dashint_{B_{r_{i}}}\chi_{\lbrace u\leq k_{i}\rbrace}\dd\mu$, we obtain
$$
A_{i+1}\leq C\left(2^{\frac{(q+p)p^*}{p}}\right)^iA_i^{1+\left(\frac{p^*-p}{p}\right)},
$$
with $C=C(\data,C_1,\Vert u\Vert_{L^{\infty}(B_R)})$.
 By Lemma 7.1 in \cite{G}, we have that $A_i\rightarrow 0$ when $i\rightarrow\infty$ provided
 
$$
A_0\leq C^{-\frac{p}{p^*-p}}(2^{\frac{(q+p)p^*}{p}})^{-\frac{p^2}{(p^*-p)^{2}}}=\tilde{C}^{-1}=\tilde{C}^{-1}(\data,C_1,\Vert u\Vert_{L^{\infty}(B_R)}).
$$ 
So, if 
$$
\frac{\mu(\lbrace x\in B_R: u\leq 2^{-\hat{j}}\rbrace)}{\mu(B_R)}\leq \tilde{C}^{-1},
$$
then, 
$$
\dashint_{B_{R/2}}\chi_{\lbrace u\leq 2^{-\hat{j}-1}\rbrace}\dd\mu=0
$$
Therefore, $\essinf_{B_{R/2}} u\geq\frac{l}{2^{\hat{j}+1}}$ as wanted.

By step 1, we have
$$
\frac{\mu(B_{R}\cap\lbrace u\leq 2^{-\hat{j}}l\rbrace)}{\mu(B_R)}\leq C\hat{j}^{-\xi},
$$
for all $\hat{j}\in\mathbb{N}$ and for some $\xi=\xi(p,s)>0$ and $C=C(\data,C_1,\Vert u\Vert_{L^{\infty}(B_R)})$.

By asking $\hat{j}$ to be large enough so that $C\hat{j}^{-\xi}\leq\min\lbrace\frac{1}{2},\tilde{C}^{-1}\rbrace$, we obtain
$$
\essinf_{B_{R/2}} u\geq\frac{l}{2^{\hat{j}+1}}=\frac{l}{C_{1,\delta}},
$$

where $C_{1,\delta}=C_{1,\delta}(\data,C_1,\Vert u\Vert_{L^{\infty}(B_R)})$.

\end{proof}

At this point, we are ready to state the following weak Harnack inequality theorem. The proof is based on methods originally developed by DiBenedetto-Trudinger in \cite{DT} which are applicable in a wide range of contexts, for example see the recent contributions by Nastasi-Pacchiano Camacho \cite{NP} and Kinnunen-Shanmugalingam \cite{KS} in the metric setting and Baroni-Colombo-Mingione \cite{BCM} in the euclidean case. The weak Harnack inequality result states that functions in the De Giorgi class (see Definition \ref{DGclass}) are locally bounded. We note that the obtained estimate is a basis for our study.

\begin{theorem}\label{Theorem 7.1KS} Assume that \eqref{eq1harnack} holds for the concentric ball $B_{3\lambda R}\Subset\Omega$; $R\leq\min\lbrace 1,\frac{\diam(X)}{18\lambda}\rbrace$. Let $u\in N^{1,1}_{\textrm{loc}}(\Omega)$ with $H(\cdot, g_u)\in L^1_{\textrm{loc}}(\Omega)$ be non-negative and $-u \in DG_H(\Omega)$. Then there exist two constants 
	$C$ and $t_-> 0$  such that
	\begin{equation*}
		\essinf_{B_{3R}} u \geq C \left(\dashint_{B_R}u^{t_-} \, d\mu\right)^{\frac{1}{t_-}}
	\end{equation*}
	for every $B_R$ with $B_{9\lambda R} \Subset \Omega$ for $0 < R \leq\min\lbrace 1,\frac{\diam(X)}{18\lambda}\rbrace$, such that \eqref{eq1harnack} holds. The
	constants $C$ and $\sigma$ are independent of the ball $B_R$.
\end{theorem}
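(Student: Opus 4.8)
The strategy is to obtain this integral weak Harnack inequality from the pointwise density estimate just proved (the lemma immediately preceding, which gives $\essinf_{B_r}u\ge l/C_{1,\delta}$ whenever $\mu(B_r\cap\{u>l\})\ge\delta\,\mu(B_r)$) by the scheme of DiBenedetto--Trudinger and Kinnunen--Shanmugalingam; in fact, once that density estimate is in hand the argument is the same as in the proof of Theorem~7.1 of \cite{KS}. The first step is to rewrite the density estimate in contrapositive form: on any admissible sub-ball $B_r$ (one on which \eqref{eq1harnack} holds and which, together with its relevant dilates, is compactly contained in $\Omega$), if $\essinf_{B_r}u<l/C_{1,\delta}$ then $\mu(B_r\cap\{u>l\})<\delta\,\mu(B_r)$. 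Two bookkeeping remarks are in order. First, since $u\in DG_H(\Omega)$ it is locally essentially bounded by the results of Section~\ref{Boundedness}, so that the constant $C_{1,\delta}$ — which depends on $\Vert u\Vert_{L^\infty}$ on the ball in question — is finite. Second, the factors $3$, $6$, $9$ together with the dilation $\lambda$ appearing in the radii are chosen precisely so that every invocation of the density estimate below occurs on a ball on which \eqref{eq1harnack} is available; this is why the hypotheses refer to $B_{3\lambda R}$, the ambient containment to $B_{9\lambda R}$, and the conclusion to $B_{3R}$.

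The core of the proof is a power-type decay estimate for the distribution function of $u$. Set $m:=\essinf_{B_{3R}}u$; replacing $u$ by $u+\varepsilon$ and letting $\varepsilon\to0^+$ at the end, we may assume $m>0$. The claim is that there exist $\sigma>0$ and $C\ge1$, depending only on $\data$, $C_1$ and $\Vert u\Vert_{L^\infty(B_{3\lambda R})}$, with
\[
\frac{\mu(B_R\cap\{u>l\})}{\mu(B_R)}\le C\Bigl(\frac{m}{l}\Bigr)^{\sigma}\qquad\text{for all }l>0.
\]
This is derived exactly as in \cite{KS}: one iterates the two internal mechanisms of the density estimate — the measure-decay step (if the density at level $l$ on $B_R$ is $\ge\delta$, then $\mu(B_R\cap\{u\le 2^{-\hat{j}}l\})$ is small) and the De~Giorgi iteration step (if $\mu(B_R\cap\{u\le l\})$ is small enough, then $\essinf u\gtrsim l$ on a concentric half-ball) — along a geometric sequence of levels $l_k\sim 2^k m$, and sums the resulting geometric series; equivalently, one tracks the at-most-polynomial growth of $C_{1,\delta}$ as $\delta\to0^+$. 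In carrying this out one has to verify that membership in $DG_H$ and condition \eqref{eq1harnack} are preserved on every ball used in the iteration — which is why the argument is confined to $B_{9\lambda R}$ — so that the resulting constants $\sigma$ and $C$ do not depend on $R$.

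Granting the power decay, the proof finishes with the layer-cake representation: for $t_{-}>0$,
\[
\dashint_{B_R}u^{t_{-}}\,\dd\mu= t_{-}\int_0^\infty l^{t_{-}-1}\,\frac{\mu(B_R\cap\{u>l\})}{\mu(B_R)}\,\dd l.
\]
Splitting at $l=m$ and bounding the relative measure by $1$ on $(0,m)$ and by the decay estimate on $(m,\infty)$, the second integral converges provided $0<t_{-}<\sigma$ and one gets $\dashint_{B_R}u^{t_{-}}\,\dd\mu\le C\,m^{t_{-}}$, that is $\bigl(\dashint_{B_R}u^{t_{-}}\,\dd\mu\bigr)^{1/t_{-}}\le C\,\essinf_{B_{3R}}u$ — the assertion, with $t_{-}:=\sigma/2$, say. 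Finally one removes the assumption $m>0$ by the approximation mentioned above. The main obstacle is the middle step, the quantitative iteration producing the power decay with a constant independent of $R$: this is precisely the content borrowed from \cite{KS}, and the only point that needs attention in the present setting is that the $p$-regime Caccioppoli inequality of Lemma~\ref{Almost standard Caccioppoli's inequality} — which here takes the place played by the ordinary $p$-Dirichlet Caccioppoli inequality in \cite{KS} — enters the argument only through the already-established density estimate, so that no new dependence on the coefficient (in particular on $a_0$) or on the radius is introduced.
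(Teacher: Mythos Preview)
Your proposal is correct and follows exactly the route the paper takes: the paper's own proof is the single sentence ``The proof follows exactly as Proof of Theorem 7.1 of \cite{KS},'' and what you have written is a faithful expansion of that argument, feeding the density estimate of the preceding lemma into the DiBenedetto--Trudinger/Kinnunen--Shanmugalingam scheme (power decay of the distribution function, then layer-cake). One minor slip: you write ``since $u\in DG_H(\Omega)$'' when the stated hypothesis is $-u\in DG_H(\Omega)$; this is harmless for the argument (and the paper itself vacillates between the two formulations in this section), but you should align it with whichever hypothesis the preceding density lemma actually uses.
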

\begin{proof} The proof follows exactly as Proof of Theorem 7.1 of \cite{KS}.
\end{proof}

\begin{theorem}\label{esssupharnack} Assume that \eqref{eq1harnack} holds for the concentric ball $B_{3\lambda R}\Subset\Omega$; $R\leq\min\lbrace 1,\frac{\diam(X)}{18\lambda}\rbrace$. Let $u\in DG_H(\Omega)$ and $-u\in DG_H(\Omega)$. Then, for any exponent  $t_+>0$ and radii $0<\rho<R$, the local estimate

$$
\esssup_{B_{\rho}} u_+\leq \frac{C}{(1-\frac{\rho}{R})^{\frac{Q}{t_+}}}\left(\dashint_{B_R}u^{t_+}\dd\mu\right)^{\frac{1}{t_+}}.
$$
holds, where $C=C(\data,C_1, \Vert u\Vert_{L^{\infty}(B_R)})$.

\end{theorem}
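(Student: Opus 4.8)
The plan is to reduce the bound for $u$ to the corresponding frozen estimate and then a covering-type argument, exactly paralleling the structure of Theorem \ref{upperharnackineqfrozen}. First I would observe that since \eqref{eq1harnack} holds on $B_{3\lambda R}$, the almost standard Caccioppoli inequality (Lemma \ref{Almost standard Caccioppoli's inequality}) is available for $u$ on any pair of concentric sub-balls of $B_{3\lambda R}$, so $u$ (and likewise $u_+$, via truncation) satisfies a $p$-phase De Giorgi type estimate with constants depending only on $\data$, $C_1$ and $\Vert u\Vert_{L^\infty(B_R)}$. This is the substitute for the definition of $DG_{H_0}^+$ that was used to run the De Giorgi iteration in the proof of Theorem \ref{upperharnackineqfrozen}.

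Next I would carry out the De Giorgi iteration with the standard choices $k_n = k(1-2^{-n}) \nearrow k$, $R_n = (1+2^{-n})R/2 \searrow R/2$, cutoff functions $\eta_n$ with $g_{\eta_n} \lesssim 2^n/R$, and the auxiliary sequence $Y_n = \dashint_{B_{R_n}} \left((u-k_n)_+/R\right)^{p}\,\dd\mu$ suitably normalized. Using H\"older's inequality, the Sobolev--Poincar\'e inequality (Theorem \ref{sstars}) for a dilation-admissible exponent $p^*>p$, the doubling property, Leibniz' rule, and \eqref{ASCaccioppoli}, one obtains a recursion of the form $Y_{n+1} \leq C b^n Y_n^{1+\delta}$ with $\delta = (p^*-p)/p>0$; Lemma 7.1 of \cite{G} then forces $Y_n\to 0$ provided $k$ is chosen as a constant multiple of $\left(\dashint_{B_R} u_+^p\,\dd\mu\right)^{1/p}$, which yields
\begin{equation*}
\esssup_{B_{R/2}} u_+ \leq C\left(\dashint_{B_R} u_+^{p}\,\dd\mu\right)^{1/p}.
\end{equation*}
Then, exactly as in the last part of the proof of Theorem \ref{upperharnackineqfrozen}, I would upgrade this: first from radius $1/2$ to arbitrary $0<\rho<r\le R$ by a point-selection argument together with the doubling estimate $\mu(B_{(r-\rho)/2}) \geq C(1-\rho/r)^Q\mu(B_r)$, and then from exponent $p$ to arbitrary $t_+>0$ — the case $t_+>p$ is immediate from H\"older, while the case $0<t_+<p$ follows from Young's inequality, the doubling property, and the absorption Lemma 3.2 of \cite{KS}. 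Throughout, since $H_0$ never actually appears here (we work directly with $u_+^p$), there is no dependence on $a_0$; the constants depend only on $\data$, $C_1$, and $\Vert u\Vert_{L^\infty(B_R)}$.

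The main obstacle I anticipate is bookkeeping the extra factor $(R/(s-t))^q$ (rather than $(R/(s-t))^p$) that \eqref{ASCaccioppoli} produces compared to the pure $p$-Laplace Caccioppoli inequality: with the geometric radii $R_n - \bar R_n \sim R\,2^{-n}$ this contributes $2^{nq}$ instead of $2^{np}$ to the recursion, so one must check that the geometric factor $b^n$ in $Y_{n+1}\le Cb^n Y_n^{1+\delta}$ is still a fixed constant (it is: $b=2^{q+1}$ or similar), and that Lemma 7.1 of \cite{G} still applies with the resulting threshold on $Y_0$. Everything else is a routine transcription of the frozen argument, with \eqref{ASCaccioppoli} playing the role formerly played by the defining inequality of $DG_{H_0}^+(\Omega)$.
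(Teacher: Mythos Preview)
Your proposal is correct and follows essentially the same approach as the paper: the paper's proof likewise runs the De Giorgi iteration with the levels $k_n=k(1-2^{-n})$ and radii $R_n=(1+2^{-n})R/2$, replaces the frozen Caccioppoli inequality by the almost standard Caccioppoli inequality \eqref{ASCaccioppoli}, combines it with a $(p^*,p)$-Sobolev--Poincar\'e inequality for zero-boundary functions to obtain a recursion $Y_{n+1}\leq C\,2^{\tilde C(q,p,Q)\,n}\,Y_n^{1+(p^*-p)/p}$ with $Y_n=k^{-p}\dashint_{B_{R_n}}(u-k_n)_+^p\,\dd\mu$, and then invokes Lemma~7.1 of \cite{G} and the interpolation argument at the end of Theorem~\ref{upperharnackineqfrozen}. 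Your anticipation of the $2^{nq}$ factor coming from the $(R/(s-t))^q$ in \eqref{ASCaccioppoli} is exactly right and matches the paper's bookkeeping.
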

\begin{proof}
We follow the ideas of the proof of Theorem \ref{upperharnackineqfrozen}.
For $k>0$ to be chosen later and any positive integer $n$, we set
$$
k_n=k(1-2^{-n})\nearrow k,\qquad\qquad R_n=(1+2^{-n})R/2\searrow R/2.
$$
and 

$$
\bar{R_n}=\frac{1}{2}(R_n+R_{n+1}),\qquad\qquad \rho_n=\frac{R/2}{2^{n+2}}=R_n-\bar{R_n}.
$$
We grab cut-off functions $\eta_n\in \textrm{Lip}_0(B_{\bar{R_n}})$ with compact support in $B_{\bar{R_n}}$, $\eta_n= 1$ on $B_{R_{n+1}}$ and $0\leq \eta_n\leq 1$, $g_{eta_n}\leq\frac{2^{n+2}}{R/2}=\frac{1}{\rho_n}.$

By Theorem \ref{sstars}, there is a $p^{*}=p^*(p,Q)>p$ such that $X$ supports a $(p^*,p)$-Poincar\'e inequality. Furthermore, by definition
$$
\left(\dashint_{B_{r_{n+1}}}(u-k_n)_+^{p^*}\dd\mu\right)^{\frac{p}{p^*}}=\left(\dashint_{B_{r_{i+1}}}(\eta_n(u-k_n)_+)^{p^*}\dd\mu\right)^{\frac{p}{p^*}}
$$

Now, by Theorem 5.51 of \cite{BB}, we get
\begin{align*}
\left(\dashint_{B_{r_{i+1}}}(\eta_n(u-k_n)_+)^{p^*}\dd\mu\right)^{\frac{p}{p^*}}&\leq C\bar{R_n}^{p}\dashint_{B_{\bar{R_n}}}g_{\eta_n(u-k_n)_+}^{p}\dd\mu\\
&\leq CR_n^{p}\dashint_{B_{\bar{R_n}}}g_{\eta_n(u-k_n)_+}^{p}\dd\mu.
\end{align*}

Therefore,

\begin{equation}\label{eq9harnack}
\left(\dashint_{B_{r_{n+1}}}(u-k_n)_+^{p^*}\dd\mu\right)^{\frac{p}{p^*}}\leq CR_n^{p}\dashint_{B_{\bar{R_n}}}g_{\eta_n(u-k_n)_+}^{p}\dd\mu.
\end{equation}

On the other hand, by Leibniz' rule and the almost standard Caccioppoli inequality, Lemma \ref{Almost standard Caccioppoli's inequality}, we obtain
\begin{align*}
\dashint_{B_{\bar{R_n}}}g_{\eta_n(u-k_n)_+}^{p}\dd\mu&\leq C\left(\dashint_{B_{\bar{R_n}}}g_{(u-k_n)_+}^{p}\dd\mu+\dashint_{B_{\bar{R_n}}}\left(\frac{2^{n+2}(u-k_n)_+}{R/2}\right)^{p}\dd\mu\right)\\
&\leq C 2^{qn}\dashint_{B_{R_n}}\left\vert\frac{(u-k_n)_+}{R}\right\vert^{p}\dd\mu\leq C 2^{qn}\dashint_{B_{R_n}}\left\vert\frac{(u-k_n)_+}{R_n}\right\vert^{p}\dd\mu,
\end{align*}
where $C=C(\data, C_1, \Vert u\Vert_{L^{\infty}(B_R)})$.

Since the levels $\lbrace k_n\rbrace$ are increasing, notice that we have the following estimates
$$
(u-k_n)^{p^*}_+\geq (u-k_n)_+^{p^*-p}(u-k_{n+1})^{p^*}_+\geq(2^{-n-1}k)^{p^*-p}(u-k_{n+1})^{p^*}_+
$$
By \eqref{eq9harnack} and the last two inequalities, we have
$$
\left((2^{-n-1}k)^{p^*-p}\dashint_{B_{r_{n+1}}}(u-k_{n+1})_+^{p^*}\dd\mu\right)^{\frac{p}{p^*}}\leq CR_n^{p}\dashint_{B_{R_n}}(u-k_n)_+^{p}\dd\mu.
$$
So,
$$
k^{-p}\dashint_{B_{r_{n+1}}}(u-k_{n+1})_+^{p^*}\dd\mu\leq C \left(2^{(\frac{qp^*}{p}+p^*-p)}\right)^{n}\left(k^{-p}\dashint_{B_{R_n}}(u-k_n)_+^{p}\dd\mu\right)^{1+\frac{(p^*-p)}{p}},
$$
with $C=C(\data, C_1, \Vert u\Vert_{L^{\infty}(B_R)})$.

Now, denoting 
$$
Y_n=k^{-p}\dashint_{B_{R_n}}(u-k_n)_+^{p}\dd\mu.
$$
Then, the previous inequality rewrites as
$$
Y_{n+1}\leq \left(C\mu(B_R)^{\frac{1}{d_1}-1}R\right)(2^{\tilde{C}(q,p,Q)})^{n}Y_n^{1+\frac{p^*-p}{p}}.
$$
 By Lemma 7.1 in \cite{G}, we have that $Y_n\rightarrow 0$ provided
 $$
 Y_0\leq C^{-1}=C^{-1}(\data,C_1, \Vert u\Vert_{L^{\infty}(B_R)}),
 $$

Therefore, by choosing $k$, such that
$$
K\geq C\left(\dashint_{B_R}u_+^p\dd\mu\right)^{\frac{1}{p}},
$$ 
with $C=C(\data,C_1, \Vert u\Vert_{L^{\infty}(B_R)})$
So, we have
$$
\lim_{n\rightarrow\infty}Y_n=0
$$
Then,
$$
\left(\dashint_{B_{R/2}}(u-k)_+^p\dd\mu\right)^{\frac{1}{p}}=0
$$
Therefore, 
$$
\esssup_{B_{R/2}}u\leq K=\left(\dashint_{B_R}u_+^p\dd\mu\right)^{\frac{1}{p}}
$$
Finally, by the interpolation argument at the end of proof of Theorem \ref{upperharnackineqfrozen}, we obtain the result.
\end{proof}
At this point, by combining the estimates obtained both for the $p$-phase and the $(p,q)$-phase, we are ready to prove the Harnack inequality. 
The proof now follows easily by merging the previous results. 
\begin{proof}[Proof of Theorem \ref{Corollary 7.3 KS}]
Let $B_R\Subset \Omega$ be a ball with $0<R<\min\lbrace 1, \frac{\diam(X)}{18\lambda}\rbrace$. If \eqref{eq1harnack} holds, then the proof follows combining Theorem \ref{Theorem 7.1KS} and Theorem \ref{esssupharnack}.
If, on the contrary, \eqref{eq1harnack} fails, then we can conclude by Theorem \ref{upperharnackineqfrozen} and Theorem \ref{essinfharnackfrozen}.
    
\end{proof}

\section*{Acknowledgements}

A. Nastasi is a member of the Gruppo Nazionale per l'Analisi Matematica, la Probabilit\`{a} e le loro Applicazioni (GNAMPA) of the Istituto Nazionale di Alta Matematica (INdAM).
A. Nastasi was partly supported by  GNAMPA-INdAM  Project 2023 "Regolarità per problemi ellittici e parabolici con crescite non standard", CUP E53C22001930001.

\section*{Declarations}
\subsection*{Conflict of interest} The authors declare that they have no conflict of interest.


\begin{thebibliography}{10}

\bibitem{BCM} P. Baroni, M. Colombo, G. Mingione, \textit{Harnack inequalities for double phase functionals}, Nonlinear Anal., 121 (2015), 206--222.

	 \bibitem{B} J. Bj\"{o}rn, \textit{Boundary continuity for quasiminimizers on metric spaces}, Illinois J. Math., 46 (2002), 383--403.
	 
	 
    \bibitem{BB}  A. Bj\"{o}rn, J. Bj\"{o}rn, \textit{Nonlinear potential theory on metric spaces}, EMS Tracts in Mathematics, 17, European Mathematical Society (EMS), Zurich (2011). 
	
\bibitem{BBS} A. Bj\"{o}rn, J. Bj\"{o}rn, N. Shanmugalingam, \textit{The Dirichlet problem for $p$-harmonic functions on metric spaces}, J. Reine Angew. Math., 556 (2003), 173--203.

	\bibitem{BMS} J.  Bj\"{o}rn, P. MacManus, N. Shanmugalingam,  \textit{Fat sets and pointwise boundary estimates for $p$-harmonic functions in metric spaces}, J. Anal. Math., 85 (2001), 339--369.


\bibitem{C} J. Cheeger, \textit{Differentiability of Lipschitz functions on metric spaces}, Geom. Funct. Anal., 9 (1999), 428--517.

\bibitem{CSV} S. Ciani, I. Skrypnik, V. Vespri, \textit{On the local behavior of local weak solutions to some singular anisotropic elliptic equations}, Adv. Nonlinear Anal. 12 (2023), 237--265.
 


\bibitem{CM1} M. Colombo, G. Mingione, \textit{Bounded Minimisers of Double Phase
Variational Integrals}, Arch. Rational. Mech. Anal., 218 (2015), 219--173. 


\bibitem{CM} M. Colombo, G. Mingione, \textit{Regularity for Double Phase Variational Problems}, Arch. Rational. Mech. Anal., 215 (2015), 443--496. 

\bibitem{CMM} G. Cupini, P. Marcellini, E. Mascolo, \textit{Nonuniformly elliptic energy integrals with $p,q$-growth}, Nonlinear Anal., 177 (2018), 312--324.


\bibitem{DFM} C. De Filippis, G. Mingione, \textit{Manifold constrained non-uniformly elliptic problems}, J. Geom. Anal., 30 (2020), 1661--1723.	


\bibitem{DGV} E. Di Benedetto, U. Gianazza, V. Vespri, \textit{Remarks on local boundedness and local H\"older continuity of local weak solutions to anisotropic $p$-Laplacian type equations}, J. Elliptic Parabol. Equ., 2 (2016), 157--169.

	\bibitem{DT}  E. Di Benedetto and N.S. Trudinger, \textit{Harnack inequality for quasiminima of variational integrals}, Ann. Inst. H. Poincar\'e Anal Non Lin\'eaire 1 (1984), 295--308.

 \bibitem{DMM} T. Di Marco, P. Marcellini, \textit{A-priori gradient bound for elliptic systems under either slow or fast growth conditions}, Calc. Var. 59, 120 (2020). 
 

\bibitem{DM} F. Duzaar, G. Mingione, \textit{Local Lipschitz regularity for degenerate elliptic systems}, Ann. Inst. Henri Poincar\'e, Anal. Non Lin\'eaire, 27 (2010), 1361--1396.

\bibitem{DMV1} G. D\"uzgun, P. Marcellini, V. Vespri, \textit{An alternative approach to the H\"older continuity of solutions to some elliptic equations}, Nonlinear Anal., 94 (2014), 133--141.

\bibitem{DMV2} G. D\"uzgun, P. Marcellini, V. Vespri, \textit{Space expansion for a solution of an anisotropic $p$-Laplacian equation by using a parabolic approach}, Riv. Math. Univ. Parma, 5 (2014), 93--111.

\bibitem{Ele} M. Eleuteri, \textit{H\"older continuity results for a class of functionals with non-standard growth}, Bollettino dell'Unione Matematica Italiana, 7-B.1 (2004), 129--157.
 
\bibitem{ELM} L. Esposito, F. Leonetti, G. Mingione, \textit{Higher integrability for minimizers of integral functionals with (p, q) growth}, J. Diff. Equ.,  157 (1999), 414--438. 

\bibitem{FHK} B. Franchi, P. Hajlasz, P. Koskela, \textit{Definitions of Sobolev classes on metric spaces}, Ann. Inst. Fourier (Grenoble), 49 (1999), 1903--1924.



\bibitem{GG1} M. Giaquinta, E. Giusti, \textit{On the regularity of the minima of variational integrals}, Acta Math., 148 (1982), 31--46. 


\bibitem{GG2} M. Giaquinta, E. Giusti, \textit{Quasi-minima}, Ann. Inst. H. Poincar\'e Anal. Non Lin\'eaire, 1 (1984), 79--107.


    
    \bibitem{G} E. Giusti, \textit{Direct Methods in the Calculus of Variations}, World Scientific Publishing, River
    Edge, NJ, (2003).
    
     
    
\bibitem{H} P. Hajlasz, \textit{Sobolev spaces on metric-measure spaces}, Contemp. Math., 338 (2003), 173--218. 

 \bibitem{HHT} P. Harjulehto, P. Hästö, O. Toivanen, \textit{H\"older regularity of quasiminimizers under generalized growth conditions}, Calc. Var., 56, 22 (2017), https://doi.org/10.1007/s00526-017-1114-z
 
\bibitem{HedKil} L. I. Hedberg, T. Kilpel\"ainen, \textit{On the stability of Sobolev spaces with zero boundary
values}. Math\`a Scand\`a, 85 (1999), 245–-258.    
    
\bibitem{HKST} J. Heinonen, P. Koskela, N. Shanmugalingam, J. Tyson, \textit{Sobolev Spaces on Metric Measure Spaces: An Approach Based on Upper Gradients}, New Mathematical Monographs, Cambridge University Press, 27, 2015.


\bibitem{KZ}	S. Keith, X. Zhong, \textit{The Poincar\'{e} inequality is an open ended condition}, Ann. of Math., 167 (2008), 575--599. 
	
	
\bibitem{KKM} T.  Kilpel\"ainen,  J.  Kinnunen,  O.  Martio, \textit{Sobolev  Spaces  with  Zero  Boundary  Values  on Metric Spaces}, Potential Anal., 12 (2000), 233–-247.


	

\bibitem{KLM} J. Kinnunen, J. Lehrb\"{a}ck, A. V\"{a}h\"{a}kangas,	\textit{Maximal Function Methods for Sobolev Spaces}, Mathematical Surveys and Monographs, 257, American Mathematical Soc., (2021).


\bibitem{KMM} J.  Kinnunen, N. Marola, O.  Martio, \textit{Harnack's principle for quasiminimizers}, 	Ric. Mat., 56 (2007), 73--88.	

	
\bibitem{KM} J. Kinnunen, O. Martio, \textit{Potential theory of quasiminimizers}, Ann. Acad. Sci. Fenn. Math., 28 (2003), 459--490.

\bibitem{KNP} J. Kinnunen, A. Nastasi, C. Pacchiano Camacho, \textit{Gradient higher integrability for double phase problems on metric measure spaces}, 	Proc. Amer. Math. Soc. (to appear), (2023).



\bibitem{KS} J. Kinnunen, N. Shanmugalingam, \textit{Regularity of quasi-minimizers on metric spaces}, Manuscripta Math., 105 (2001), 401--423.

\bibitem{LSV} N. Liao, I. Skrypnik, V. Vespri, \textit{Local regularity for an anisotropic elliptic equation}, Calc. Var. Partial Differential Equations, 59 (2020), 31.

\bibitem{Lieberman} G.M. Lieberman, \textit{The natural generalization of the natural conditions of Ladyzhenskaya and Ural'tseva for elliptic equations}, Commun. PDE, 16 (1991), 311--361.

\bibitem{LS} V. Liskevich, I. Skrypnik, \textit{H\"older continuity of solutions to an anisotropic elliptic equation}, Nonlinear Anal., 71 (2009), 1699--1708.

\bibitem{Manfredi} J. J. Manfredi, \textit{Regularity for minima of functionals with $p$-growth}, J. Differ. Equ., 76 (1988), 203--212.

\bibitem{Ma} P. Marcellini, \textit{Growth conditions and regularity for weak solutions to nonlinear elliptic pdes}, J. Math. Anal. Appl., 501 (2021), 32 pp.

\bibitem{Ma0} P. Marcellini, \textit{Local Lipschitz continuity for $p,q$-PDEs with explicit $u$-dependence}, Nonlinear Anal., 226 (2023), 113066
https://doi.org/10.1016/j.na.2022.113066.	


\bibitem{Ma1} P. Marcellini, \textit{Regularity and existence of solutions of elliptic equations with $p$, $q$-growth conditions}, J. Differential Equations, 90 (1991), 1--30.


\bibitem{Ma2} P. Marcellini, \textit{Regularity for elliptic equations with general growth conditions}, J. Differential Equations, 105 (1993), 296--333.


\bibitem{Ma3}P. Marcellini, \textit{Regularity of minimizers of integrals of the calculus of variations with non standard growth conditions}, Arch. Rational Mech. Anal., 105 (1989), 267--284.



\bibitem{Min} G. Mingione, \textit{Regularity of minima: an invitation to the dark side of the calculus of variations}, Appl. Math. 51 (2006), 355--425.

\bibitem{MinR} G. Mingione, V. R\v{a}dulescu, \textit{Recent developments in problems with nonstandard growth and nonuniform ellipticity}, J. Math. Anal. Appl. 501 (2021), 125197, 41 pp.




\bibitem{NP} A. Nastasi, C. Pacchiano Camacho, \textit{Regularity properties for quasiminimizers of a $(p,q)$-Dirichlet integral}, Calc. Var., 227  (2021), 1--37.


\bibitem{Tach} A. Tachikawa, \textit{Boundary regularity of minimizers of double phase functionals}, J. Math. Anal. Appl., 501 (2021) 123946.


\bibitem{Sh1} N. Shanmugalingam, \textit{Newtonian spaces: an extension of Sobolev spaces to metric measure
spaces}, Rev. Mat. Iberoam., 16 (2000), 243--279.

 
\end{thebibliography}
\end{document}